\newtheorem{theorem}{Theorem}
\newtheorem{lemma}{Lemma}
\newtheorem{proposition}{Proposition}
\newtheorem{conjecture}{Conjecture}
\newtheorem*{definition}{Definition}
\theoremstyle{definition}
\newtheorem{remark}{Remark}[section]
\newcommand{\klammern}[4][]%
{\ifthenelse{\equal{#1}{}}{\left#2}{\csname#1\endcsname#2}%
#4\ifthenelse{\equal{#1}{}}{\right#3}{\csname#1\endcsname#3}}
\newcommand{\betrag}[2][]{\klammern[#1]{\lvert}{\rvert}{#2}}
\newcommand{\conj}[1]{^{(#1)}}
\def\QQ{\mathbb Q}
\def\ZZ{\mathbb Z}
\def\RR{\mathbb R}
\begin{document}

\title{There is no Diophantine quintuple}

\author{Bo He}
\address{Department of Mathematics, Hubei University for Nationalities, Enshi, Hubei, 445000 P.R. China and Institute of Applied Mathematics,  Aba Teachers University, Wenchuan, Sichuan, 623000 P. R. China}
\email{bhe@live.cn}

\author{Alain Togb\'e}
\address{Department of Mathematics, Statistics, and Computer Science, 
Purdue University Northwest, 1401 S, U.S. 421, Westville IN 46391 USA}
\email{atogbe@pnw.edu}

\author{Volker Ziegler}
\address{Institute of mathematics, University of Salzburg, Hellbrunner Strasse 34/I, A-5020 Salzburg, Austria}
\email{volker.ziegler@sbg.ac.at}


\subjclass[2010]{11D09, 11B37, 11J68, 11J86, 11Y65.}

\keywords{Diophantine $m$-tuples, Pell equations, Baker's method, Reduction method}

\begin{abstract}
A set of $m$ positive integers $\{a_1, a_2, \dots , a_m\}$ is called a Diophantine $m$-tuple if $a_i  a_j + 1$ is a perfect square for all $1 \le i < j \le m$. 
In~\cite{Dujella:2004} Dujella proved that there is no Diophantine sextuple and that there are at most finitely many Diophantine quintuples.
In particular, a folklore conjecture concerning Diophantine $m$-tuples states that no Diophantine quintuple exists at all. In this paper we prove this conjecture. 
\end{abstract}

\date{\today}

\maketitle

\tableofcontents

\section{Introduction}\label{sec:1}

A set of $m$ distinct, positive integers $\{a_1, \dots,  a_m\}$ is called a Diophantine $m$-tuple if $a_i a_j +1$ is a perfect square for all $1 \le i < j \le m$.
Diophantus~\cite{Diophantus} (see also~\cite{Dickson}) studied sets of positive rational numbers with the same property, in particular he found the set of four
positive rational numbers $\left\{\frac{1}{16}, \frac{33}{16}, \frac{17}{4}, \frac{105}{16}\right\}$. But, the first example for an integral Diophantine quadruple $\{1,3,8,120\}$
was found by Fermat. Later Euler showed that there exist infinitely many Diophantine quadruples.

One of the first key results concerning the possible existence of Diophantine
quintuples was achieved in 1969, by Baker and Davenport~\cite{Baker-Davenport:1969} who proved that the fourth element $120$ in Fermat's
quadruple uniquely extends the Diophantine triple  $\{1, 3, 8\}$.
Thus, they showed that the Diophantine quadruple $\{1, 3, 8, 120\}$ cannot be extended by a fifth positive integer to a Diophantine quintuple.
However, no Diophantine quintuple was yet found and it is a folklore conjecture, the so-called {\it Diophantine quintuple conjecture}, that no Diophantine quintuple exists.
In this paper, we give a proof of this conjecture. 

\begin{theorem}\label{thm:main}
There does not exist a Diophantine quintuple.
\end{theorem}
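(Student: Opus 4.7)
The plan is to assume for contradiction that a Diophantine quintuple $\{a,b,c,d,e\}$ with $a<b<c<d<e$ exists and to derive a contradiction by combining the classical simultaneous-Pell-equations framework with Baker's method on linear forms in logarithms and a Baker--Davenport style reduction. Thanks to the work of Dujella and subsequent refinements, one may assume that $d$ is very large and that the quadruple $\{a,b,c,d\}$ is tightly controlled by the triple $\{a,b,c\}$ (e.g.\ of regular or near-regular form), so attention can be focused on the extension $e$ satisfying the four simultaneous conditions $ae+1=x^2$, $be+1=y^2$, $ce+1=z^2$, $de+1=w^2$.

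First I would eliminate $e$ between pairs of these equations to produce Pell equations such as $ay^2-bx^2=a-b$, $az^2-cx^2=a-c$, $aw^2-dx^2=a-d$. The integer solutions of each such equation organise into finitely many classes, each parameterised by a binary recurrence in an index $m$, $n$, $p$. Matching the four parameterisations so that they yield a common value of $e$ translates the problem into the intersection problem for three binary recurrences, $V_m^{(1)}=V_n^{(2)}=V_p^{(3)}$. A fundamental-solution analysis together with congruence arguments modulo $c$ and $d$ should show that any nontrivial coincidence forces lower bounds of the shape $\min(m,n,p)\gg d^{1/2}$, pinning $e$ to a narrow range of admissible values.

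Next I would build, from the closed-form Binet expressions for these recurrences, a linear form in three logarithms $\Lambda=m\log\alpha_1-n\log\alpha_2+\log\beta$ and apply Matveev's theorem to obtain $|\Lambda|>\exp(-C(\log B)(\log d))$, where $B=\max(m,n,p)$. Combined with the trivial analytic estimate $|\Lambda|<\alpha^{-B}$, this yields an explicit absolute upper bound $B<B_0$ and hence an explicit upper bound on $d$. A Baker--Davenport reduction applied to the continued fraction expansion of $\log\alpha_1/\log\alpha_2$ (together with modular variants) then brings $B_0$ down by several orders of magnitude, after which the finitely many remaining quadruples $\{a,b,c,d\}$ can be ruled out by direct computation.

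The main obstacle I anticipate is the matching of the upper and lower bounds on $B$: the bound delivered by Matveev is typically of size $10^{15}$ to $10^{20}$, while the natural congruence lower bound is only a modest power of $d$, so closing the gap requires keeping the multiplicative constants under tight control. This in turn demands a careful case analysis of the admissible shapes of the inner triple $\{a,b,c\}$ (including the cases where $b$ is close to $a$, where $c$ is close to $a+b+2r$ with $ab+1=r^2$, and the so-called irregular configurations), and the reduction step must be iterated and tailored to each residue class of the recurrence indices, since a single pass will generally not suffice. Finally, the residual computer verification over the bounded parameter range must be organised carefully enough to remain feasible.
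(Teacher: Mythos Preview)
Your outline is broadly the strategy that was already in the literature before this paper: Dujella, Fujita, Cipu, Trudgian and others had carried out exactly the programme you sketch (simultaneous Pell equations, gap principles giving lower bounds of the shape $m\gg d^{1/2}$, Matveev, Baker--Davenport reduction, case analysis, computer search) and had pushed the bound on $d$ down to about $10^{67}$, but could not close the gap. You have correctly identified the obstacle---matching the Matveev upper bound with the congruence lower bound---but your proposal offers no concrete mechanism to overcome it, and the generic reduction step you describe is precisely what had already been tried and found insufficient.

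The paper closes the gap with three ingredients absent from your proposal. First, beyond Matveev, it applies Mignotte's ``kit'' for three logarithms (which in the degenerate cases reduces to a two-logarithm form treated by Laurent) to squeeze the upper bound on the index $h$ from $\sim 10^{17}$ down to $\sim 5\cdot 10^{13}$. Second, it introduces a $\partial$-operator on Diophantine triples and a notion of \emph{degree}: every triple descends in finitely many steps to an Euler triple $\{a,b,a+b+2r\}$, and this gives a structured case split (degree $0$, degree $1$, degree $\geq 2$) rather than an ad hoc case analysis. Third---and this is the decisive new idea---for Euler triples the paper proves sharp congruences on the recurrence indices: one of $l\equiv n\equiv 0\pmod s$, $m\equiv n\equiv 0\pmod t$, or $n\equiv \pm r\pmod{st}$ must hold. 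These congruences yield lower bounds on the indices that are \emph{much} stronger than the generic $\gg d^{1/2}$ you invoke, and in combination with Laurent's two-logarithm bound they force $r,s,t$ to be small enough (e.g.\ $r<900154$) that the Baker--Davenport sweep becomes feasible. Without an analogue of this congruence step your approach stalls at the same barrier that stopped previous work.
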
 
 
In view of finding Diophantine quintuples or showing that none exists, one of the most important topics is the extensibility and existence of Diophantine $m$-tuples.
For any fixed pair of positive integers $a$ and $b$ such that $ab+1=r^2$ is a perfect square, i.e. $\{a, b\}$ is a Diophantine pair,
Euler proved that one can always add to $\{a, b\}$ a third element, namely $a+b+2\sqrt{ab+1}$, in order to obtain a Diophantine triple of the form
\begin{equation}\label{eq:euler-tip}
\{a,b,a+b+2r\}.
\end{equation}
In fact, for every Diophantine pair $\{a, b\}$ there exist infinitely many positive integers $c$ such that $\{a, b, c\}$ is a Diophantine triple.
Moreover, Euler observed that adding $4r(a+r)(b+r)$ to the Diophantine triple \eqref{eq:euler-tip} one obtains a Diophantine quadruple 
\begin{equation}\label{eq:euler-qud}
\{a,b,a+b+2r, 4r(a+r)(b+r)\}
\end{equation}
therefore proving the existence of infinitely many Diophantine quadruples. We call a Diophantine triple of form \eqref{eq:euler-tip}
an \emph{Euler triple}, or a \emph{regular triple} (see \cite{Fujita-Miyazaki}) and a Diophantine quadruple of form
\eqref{eq:euler-qud} an \emph{Euler quadruple}, or \emph{doubly regular quadruple} (see \cite{Martin-Sitar}).

Let $\{a,b,c\}$ be a Diophantine triple (not necessarily of form \eqref{eq:euler-tip}), i.e. $ab+1=r^2$, $ac+1=s^2$ and $bc+1=t^2$ are all perfect squares.
In 1979, Arkin, Hoggatt and Strauss~\cite{Arkin-Hoggatt-Strauss:1979} noticed that adding  
\begin{equation*}
\begin{split}
d_{+}&=a+b+c+2abc+2\sqrt{(ab+1)(ac+1)(bc+1)}\\
& = a+b+c+2abc+2rst
\end{split}
\end{equation*}
 to the Diophantine triple $\{a,b,c\}$ results in a Diophantine quadruple of the form
\begin{equation*}
\{a,b,c, a+b+c+2abc+2rst\}.
\end{equation*} 
Such a quadruple is called a \emph{regular Diophantine quadruple}. Consequently, Euler quadruples are a special case of regular quadruples.
Since all known Diophantine quadruples are regular, several authors (see e.g. \cite{Arkin-Hoggatt-Strauss:1979}, \cite{Dujella-Pethoe:1998})
were led to an even stronger version of Theorem~\ref{thm:main} which is still open.

\begin{conjecture}\label{conj:2}
 If $\{a, b, c, d\}$ is a Diophantine quadruple such that $d > \max\{a, b, c\}$, then $d = d_+$.
\end{conjecture}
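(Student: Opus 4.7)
The strategy is to reduce Conjecture~\ref{conj:2} to a claim about simultaneous solutions of a system of Pell equations. Fix a Diophantine triple $\{a,b,c\}$ with $a<b<c$ and a hypothetical extension $d>c$. Eliminating $d$ from the defining relations $ad+1=x^2$, $bd+1=y^2$, $cd+1=z^2$ yields
\begin{align*}
bx^2-ay^2&=b-a,\\
cx^2-az^2&=c-a,\\
cy^2-bz^2&=c-b,
\end{align*}
so that each of $(x,y)$, $(x,z)$, $(y,z)$ satisfies a generalized Pell equation whose positive solutions split into finitely many classes of binary linear recurrences. A valid extension $d>c$ then corresponds to a triple of indices at which all three recurrences agree, and the regular value $d=d_+$ corresponds to the smallest such triple. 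It therefore suffices to show that no further simultaneous solution exists.

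From here I would follow the Baker--Davenport scheme, sharpened by the machinery developed in this paper for the proof of Theorem~\ref{thm:main}. First, one extracts a gap principle from the recurrences: any non-regular extension $d>d_+$ must be at least polynomially (indeed almost exponentially) larger than $c$, giving a lower bound of the form $d>C(abc)^k$ for explicit constants $C$ and $k$. Second, one rewrites the smallest hypothetical non-regular solution as a linear form in three logarithms of algebraic numbers of controlled height and applies a Matveev-type estimate to bound $\log d$ from above. Matching the two bounds confines $d$ to a finite, explicit range, which one then eliminates by Baker--Davenport or LLL-based reduction applied to the finitely many admissible triples $\{a,b,c\}$ with small parameters.

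The main obstacle is the interface between these two bounds in the regime where one of $a,b,c$ is small or where two of them lie close together: the heights in the linear form are then too modest for Matveev's inequality to dominate the gap principle, and the reduction step alone is not strong enough to close the residual window. Overcoming this will likely require additional structural input, for instance the sharper congruence identities exploited by Dujella, Filipin, Fujita and others in partial resolutions of Conjecture~\ref{conj:2}, or a case split according to whether $\{a,b,c\}$ is itself regular in the sense of \eqref{eq:euler-tip}. Theorem~\ref{thm:main} supplies one further handle: if $d_1\neq d_2$ were two distinct extensions of the same triple then $d_1 d_2+1$ could not be a perfect square, and this extra constraint can be combined with the Pellian analysis to narrow the remaining cases.
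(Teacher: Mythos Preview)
The paper does not prove Conjecture~\ref{conj:2}. It is stated explicitly as an open problem: ``a proof of Conjecture~\ref{conj:2} seems to be still out of reach.'' What the paper proves is the weaker Theorem~\ref{thm:main} (no Diophantine quintuple), and even that requires the extra leverage coming from the \emph{fifth} element $e$, via Fujita's Theorem~\ref{thm:fujita} and the additional Pell equations \eqref{eq:ad}--\eqref{eq:cd} involving $d$ and $e$. Your proposal, by contrast, works only with the three Pell equations in $a,b,c$ and an unknown $d$, which is exactly the setting in which the conjecture has resisted attack for decades.

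Your own text acknowledges the gap: you write that ``the heights in the linear form are then too modest for Matveev's inequality to dominate the gap principle, and the reduction step alone is not strong enough to close the residual window.'' That is not a technical wrinkle but the whole difficulty. The gap principle for a generic triple $\{a,b,c\}$ gives a lower bound for the relevant index that grows like a power of $c$, while Matveev's theorem gives an upper bound polynomial in $\log c$; these meet only when $c$ is bounded, and there is no uniform mechanism to bound $c$ across all triples. The partial results in Table~\ref{tbl:1} succeed precisely because they fix a one- or two-parameter family and exploit its special arithmetic. The paper's proof of Theorem~\ref{thm:main} closes an analogous gap only by invoking the quintuple structure: Lemma~\ref{lem:hbd} yields $h>6.2\sqrt{ac}$ from the equations involving $d$ and $e$, and the Euler-triple congruences of Section~\ref{sec:7} come from the interplay of all six Pell equations. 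None of this is available when you merely posit a single irregular $d$.

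Your final remark is correct but does not help: Theorem~\ref{thm:main} does imply that for two extensions $d_1\neq d_2$ of $\{a,b,c\}$ the product $d_1d_2+1$ is not a square, but this only says $\{a,b,c,d_1,d_2\}$ is not a quintuple; it places no constraint on whether an irregular $d_2$ exists alongside $d_+$. As written, the proposal is a reasonable outline of the standard programme, not a proof.
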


 After the result of Baker and Davenport on the extensibility of the triple $\{1,3,8\}$, the extensibility of further, generalized pairs and triples was studied.
 All known results positively support Conjecture~\ref{conj:2} (see Table~\ref{tbl:1} below). In addition, in a series of papers \cite{He-Togbe:1,He-Togbe:2,He-Togbe:3}, He and Togb\'e verified Conjecture~\ref{conj:2} for triples of the form $\{k,A^2k+2A, (A+1)^2k+2(A+1)\}$ with two parameters $k$ and $A$, where $2\le A\le 10$ or $A\ge 52330$. In particular, they showed that such a triple can be extended only to a regular quadruple. For more results about the regularity of Diophantine pairs and triples, we refer to \cite{Dujella:1999} and \cite{FFT1}. Recently, Fujita and Miyazaki~\cite{Fujita-Miyazaki} proved that any fixed Diophantine triple can be extended only to a Diophantine quadruple in at most $11$ ways by joining a fourth element exceeding the maximal element in the triple.
 
\begin{table}[!h]\label{tbl:1}
\begin{center}
\def\temptablewidth{0.6\textwidth}
{\rule{\temptablewidth}{1 pt}}
\begin{tabular*}{\temptablewidth}{@{\extracolsep{\fill}}cccc}
 {\bf Pairs / Triples } & {\bf References}   \\   \hline
 $\{k-1, k+1, 4k\}$  & \cite{Dujella:1997-1}   \\   \hline
  \{1, 3\}        & \cite{Dujella-Pethoe:1998}     \\ \hline
 $\{k-1,k+1\}$        &     \cite{Fujita,BDM}   \\ \hline
 $\{a,b\}$ with $b<a+4\sqrt{a}$   &  \cite{FFT2}  \\ \hline
  $\{k, 4k\pm 4\}$       &   \cite{FFT2,HPST}  \\ \hline
  $\{1, b\}$ with $b-1$ be prime       &   \cite{HPTY}  
              \end{tabular*}
       {\rule{\temptablewidth}{1pt}}
       \end{center}
    \tabcolsep 0pt \caption{Results on the extensibility of Diophantine pairs and triples} \vspace*{-12pt}
       \end{table}

Obviously Conjecture~\ref{conj:2} implies Theorem~\ref{thm:main}, but a proof of Conjecture~\ref{conj:2} seems to be still out of reach. We do not
even know whether there are infinitely many irregular Diophantine quadruples or not. In the case of the Diophantine quintuple conjecture, Theorem \ref{thm:main}, several researchers obtained several important results. Particularly, the first absolute bound for the size of Diophantine $m$-tuples was given by Dujella~\cite{Dujella:2001}, when he showed that $m\le 8$. In 2004, Dujella~\cite{Dujella:2004} proved that there does not exist a Diophantine sextuple. Moreover, Dujella obtained the following result:

\begin{theorem}\label{thm:dujella}
 There are only finitely many Diophantine quintuples.
 \end{theorem}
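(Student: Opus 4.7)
The plan is to follow the strategy employed by Dujella. Suppose $\{a,b,c,d,e\}$ is a Diophantine quintuple with $a < b < c < d < e$, and set $ab+1=r^2$, $ac+1=s^2$, $bc+1=t^2$, $ad+1=x^2$, $bd+1=y^2$, $cd+1=z^2$ (with analogous squares involving $e$). The first step is to reduce the extension problem to a system of simultaneous Pell equations. Eliminating $d$ gives, for instance,
\[
cx^2 - az^2 = c - a, \qquad cy^2 - bz^2 = c - b, \qquad bx^2 - ay^2 = b - a,
\]
and the integer solutions of each individual Pell equation fall into finitely many binary recurrence sequences. Hence $z$ must appear simultaneously in two such recurrences, say $v_m = w_n$, where $v_m$ is built from the fundamental unit of $\QQ(\sqrt{ac})$ and $w_n$ from that of $\QQ(\sqrt{bc})$.

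Second, I would extract a lower bound for $d$ (gap principle). Working modulo $4ac$ and $4bc$ one shows that the only coincidences $v_m = w_n$ with small indices correspond to the regular extension $d = d_+$, so that any \emph{irregular} extension forces $m, n$, and hence $d$, to be large compared to $c$ in an explicit way (of the shape $d > c^{\kappa}$ for some $\kappa > 1$). This is the standard ingredient from Jones and Dujella that propagates the size of a Diophantine triple into a strong lower bound on any extending element.

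Third, I would derive a matching upper bound via Baker's method. The recurrence identity $v_m = w_n$ can be rewritten as a linear form in three logarithms of algebraic numbers,
\[
\Lambda = m \log \alpha - n \log \beta + \log \gamma,
\]
where $\alpha, \beta$ are the relevant fundamental units and $\gamma$ encodes the initial terms. On the one hand $\Lambda$ is exponentially small in $\max(m,n)$; on the other, Matveev's theorem (or Baker--W\"ustholz) gives an effective lower bound $|\Lambda| \gg (\log\max(m,n))^{-C}$. Combining these inequalities produces an absolute upper bound on $\max(m,n)$, and therefore on $d$ in terms of $a,b,c$. Iterating the argument on the quadruple $\{a,b,c,e\}$ (or $\{b,c,d,e\}$) yields the analogous upper bound for $e$.

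The conclusion is then that the lower bound (growing with the triple $\{a,b,c\}$) and the Baker upper bound can only be reconciled for finitely many triples $\{a,b,c\}$, which leaves only finitely many possibilities for the full quintuple. The main obstacle is the calibration of these two bounds: the Baker-type upper bound is astronomical, depending on heights of the units $\alpha, \beta$, so the gap principle must be sharpened enough that the two inequalities become incompatible whenever $c$ exceeds an explicit effective threshold. For the finitely many triples below that threshold, a Baker--Davenport continued-fraction reduction (together with Dujella's earlier result that any Diophantine $m$-tuple has $m \le 8$) in principle settles the remaining cases, yielding Theorem~\ref{thm:dujella}.
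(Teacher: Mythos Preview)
The paper does not contain its own proof of Theorem~\ref{thm:dujella}; the statement is quoted there as Dujella's 2004 result, and the rest of the paper is devoted to the stronger Theorem~\ref{thm:main}. Your outline is a faithful sketch of Dujella's original argument: reduce the extension of $\{a,b,c\}$ to simultaneous Pell equations, use congruence (gap) arguments to force the recurrence indices $m,n$ to be large in terms of $c$ and $d$, rewrite the coincidence $v_m=w_n$ as a small linear form in three logarithms, and apply a Matveev-type lower bound to obtain an absolute effective upper bound on $d$ (and then on $e$ by repeating the argument one level up).

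One correction to your final paragraph: once an absolute upper bound on $d$ is in hand, finiteness is immediate --- there are only finitely many Diophantine quadruples with all elements below a fixed bound, and each extends to at most finitely many $e$ by the same bound applied to $\{a,b,c,d\}$. No Baker--Davenport reduction is required, and the earlier bound $m\le 8$ on tuple length plays no role here. Those reduction steps belong to the \emph{nonexistence} program (Theorem~\ref{thm:main}), where one must actually rule out the finitely many surviving candidates; for the bare finiteness statement of Theorem~\ref{thm:dujella} they are superfluous, and invoking them muddles the logical structure of the argument.
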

 
 A further important step towards a proof of Theorem \ref{thm:main} was made by Fujita~\cite{Fujita-1}, who proved the following:
 
 \begin{theorem}\label{thm:fujita}
 If $\{a,b,c,d,e\}$ is a Diophantine quintuple with $a<b<c<d<e$, then $d=d_{+}$.
 \end{theorem}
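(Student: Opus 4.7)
The plan is to argue by contradiction: suppose $\{a,b,c,d,e\}$ is a Diophantine quintuple with $a<b<c<d<e$ and $d\neq d_{+}$, where $d_{+}=a+b+c+2abc+2rst$ is the regular extension of the triple $\{a,b,c\}$. The strategy combines the standard parametrisation of simultaneous Pell equations by binary linear recurrences with gap principles and Baker's method of linear forms in logarithms.

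First I would set up the Pell systems. From $ad+1=x^{2}$, $bd+1=y^{2}$, $cd+1=z^{2}$, eliminating $d$ gives the two Pellian equations
\[
cx^{2}-az^{2}=c-a,\qquad cy^{2}-bz^{2}=c-b.
\]
Their integer solutions fall into finitely many binary linear recurrence sequences $v_{m}$ and $w_{n}$, and a valid $d$ corresponds to a coincidence $v_{m}=w_{n}$. The regular extension $d_{+}$ arises from the smallest indices in a specific matching; the assumption $d\neq d_{+}$ therefore forces the coincidence to occur at larger indices, whence the standard gap estimates yield a lower bound of the form $d\gg c^{\kappa}$ with $\kappa>1$.

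Next I would bring in $e$. Since $\{a,b,c,e\}$ is a Diophantine quadruple, $e$ is described by the same kind of recurrences attached to the triple $\{a,b,c\}$; simultaneously, since $\{a,b,c,d,e\}$ is a quintuple, $e$ extends each of the triples $\{a,b,d\}$, $\{a,c,d\}$, $\{b,c,d\}$ and is therefore also described by recurrences attached to those triples. Equating two such descriptions of $e$ produces a linear form in three logarithms
\[
\Lambda=m\log\alpha-n\log\beta+\log\mu,
\]
where $\alpha,\beta$ are built from fundamental units of $\QQ(\sqrt{ac})$ and $\QQ(\sqrt{bc})$ and $\mu$ is an algebraic number whose height is controlled by $a,b,c,d$. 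The matching of recurrences forces $|\Lambda|$ to be exponentially small in the indices, while Matveev's theorem gives an effective lower bound for $\log|\Lambda|$. Combined with the gap lower bound on $d$, this yields absolute upper bounds on the indices and hence on $a,b,c,d,e$.

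The main obstacle is the final reduction. Since $a,b,c$ are not fixed, one cannot apply Baker--Davenport or LLL reduction numerically as in the original treatment of $\{1,3,8\}$. Instead one has to combine the Fujita--Miyazaki bound that any Diophantine triple has at most eleven extensions with refined, uniform gap principles, separating the cases in which $b$ is small relative to $c$ from those in which $b$ is comparable to $c$ (where the ratio $\log b/\log c$ enters the estimates). Pushing the linear form argument uniformly through these two-parameter ranges, and controlling the initial terms of the recurrence sequences via congruences modulo $c$ and $d$, is where the genuine work lies.
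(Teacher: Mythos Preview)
The paper does not actually prove this theorem; it is stated as a result due to Fujita \cite{Fujita-1} and used as an input throughout the rest of the argument. So there is no ``paper's own proof'' to compare against here.

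That said, your outline is in the right general spirit of Fujita's argument --- Pell parametrisations, recurrence coincidences, congruence constraints on the initial terms, and Baker's method --- but as written it is a programme rather than a proof, and it has a real structural gap. The crucial step in Fujita's paper is not the linear-form-in-logarithms bound per se, but the determination of the \emph{fundamental solutions} of the relevant Pell equations under the hypothesis that a fifth element $e$ exists: one shows that the presence of $e$ forces the initial terms of all the recurrence sequences attached to $\{a,b,c,d\}$ to be $\pm 1$, and this in turn pins down $d$ as $d_{+}$ via the classical parametrisation. Your sketch inverts the logic, trying to bound $d$ from above via $e$ and then reduce, but you never explain how the existence of $e$ constrains the \emph{shape} of the fundamental solutions for the $\{a,b,c,d\}$-system; without that, the contradiction does not close.

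Two further points. First, invoking the Fujita--Miyazaki bound (at most eleven extensions of a triple) is anachronistic: that result postdates \cite{Fujita-1} and in any case does not obviously help with a uniform reduction over all $(a,b,c)$. Second, your final paragraph concedes that ``pushing the linear form argument uniformly through these two-parameter ranges \dots is where the genuine work lies,'' which is an honest admission that the proof is not actually given. If you want to supply a self-contained argument for this theorem, you would need to reproduce Fujita's analysis of the fundamental solutions (his Lemmas on the initial terms $z_{0},z_{1}$ etc.) rather than attempt a direct Baker--Davenport-style reduction.
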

 
 A good estimate for upper bounds of $d$ is a key step of the settlement of Theorem~\ref{thm:main}. The following table contains a summary of the progress made towards Theorem~\ref{thm:main}. 
 
 \begin{table}[!h] \label{tbl:2}
\begin{center}
\def\temptablewidth{0.72\textwidth}
{\rule{\temptablewidth}{1.1 pt}}
\begin{tabular*}{\temptablewidth}{@{\extracolsep{\fill}}ccccc}
 {\bf Authors} & {\bf Year} & {\bf $d\le$}   \\   \hline
   Dujella   \cite{Dujella:2004} & 2004  &$10^{2171} $    \\ \hline
  Fujita   \cite{Fujita-number} &  2010  &$10^{830}$   \\ \hline
  Filipin and Fujita  \cite{FF} & 2013  &$10^{100}$  \\ \hline
  Elsholtz, Filipin and Fujita \cite{EFF} & 2013 &$3.5\cdot 10^{94}$   \\ \hline
    Wu and He \cite{Wu-He:2014}  & 2014 & $10^{74}$ \\  \hline
  Cipu \cite{Cipu}  & 2015   &$10^{72.188}$  \\ \hline
  Trudgian \cite{Trudgian} & 2015 & $4.02\cdot 10^{70}$  \\ \hline
  Cipu, Trudgian \cite{Cipu-Trudgian} & 2016 & $7.228 \cdot 10^{67}$   
              \end{tabular*}
       {\rule{\temptablewidth}{1.1pt}}
       \end{center}
    \tabcolsep 0pt \caption{Upper bounds for $d$} \vspace*{-15pt}
       \end{table}  

For a more complete account of Diophantine $m$-tuples and related problems we refer to \cite{Dujella:2016} or Dujella's web page \cite{Dujella:HP}.

\section{Outline of the proof} \label{sec:2}

The three new key arguments that lead to the proof of our main result are:

\begin{enumerate}
\item The definition of an operator on Diophantine triples and their classification. 

\item The use of sharp lower bounds for linear forms in three logarithms obtained by applying
a result due to Mignotte \cite{Mignotte:kit} (see also~\cite{BMS1,BMS2}).

\item The use of new congruences in the case of Euler quadruples.    
\end{enumerate}

The purpose of this section is to give more details to these three key arguments and an outline of the proof of Theorem \ref{thm:main}. After stating some auxiliary results in Section~\ref{sec:aux}, we will define in Section~\ref{sec:3} the $\partial$- and $\partial_{-D}$-operators on Diophantine triples,
where $D$ is a nonnegative integer. The $\partial$-operator is defined for non-Euler triples and yields a new triple which is ``closer'' to the property of being an Euler triple. One of the key results in Section~\ref{sec:3} is that if we apply the $\partial$ operator repeatedly we always arrive at an Euler triple in finite time. This allows us to introduce the \emph{degree} of a Diophantine triple. Roughly speaking an Euler triple has degree $0$ and a triple to which an $D$-fold application of $\partial$ yields an Euler triple has degree $D$. This leads us to a new classification of Diophantine triples. 

In Section \ref{sec:4}, we set up a system of Pell equations associated with a Diophantine quintuple $\{a, b, c, d, e\}$:
\begin{align*}
aY^2 - bX^2 &= a - b,\\
aZ^2 - cX^2 &= a - c,\\
bZ^2 - cY^2 &= b - c,\\
aW^2 - dX^2 &= a - d,\\
bW^2 - dY^2 &= b - d,\\
cW^2 - dZ^2 &= c - d. 
\end{align*}
Using ideas due to Fujita \cite{Fujita-number},  we can show that the solutions to this system of Pell equations satisfy
\begin{align*}
Y \sqrt{a} + X \sqrt{b} &= (\sqrt{a} + \sqrt{b})(r+\sqrt{ab})^{2h},\\
Z \sqrt{a} + X \sqrt{c} &= ( \sqrt{a} + \sqrt{c})(s+\sqrt{ac})^{2j},\\
Z \sqrt{b} + Y \sqrt{c} &= (\sqrt{b} +  \sqrt{c})(t+\sqrt{bc})^{2k},\\
W \sqrt{a} + X \sqrt{d} &= (\varepsilon \sqrt{a} + \sqrt{d})(x+\sqrt{ad})^{2l},\\
W \sqrt{b} + Y \sqrt{d} &= (\varepsilon \sqrt{b} + \sqrt{d})(y+\sqrt{bd})^{2m},\\
W \sqrt{c} + Z \sqrt{d} &= (\varepsilon \sqrt{c} +  \sqrt{d})(z+\sqrt{cd})^{2n},
\end{align*}
for some nonnegative integers $h, j, k, l, m$ and $n$. By extending the classical gap principles first introduced by Dujella and Peth\H{o} \cite{Dujella-Pethoe:1998}, we find several relations and lower bounds for those exponents, most important the inequality $h>6.2 \sqrt{ac}$ (see Lemma \ref{lem:hbd}, in Section \ref{sec:5}).

The next step (Section \ref{sec:6}) is to find an upper bound for $h$ by using Baker's method, i.e. using lower bounds for linear forms
in logarithms. In view of some new results proved in Section \ref{sec:5} we find a slight improvement of the latest result due to Cipu
and Trudgian \cite{Cipu-Trudgian}. Using deep results on lower bounds for linear forms in three logarithms due to Mignotte \cite{Mignotte:kit}
(see also \cite{BMS1,BMS2}) we can further improve the bound and finally arrive at the upper bounds $d<1.83 \cdot 10^{52}$ and $h<5.136 \cdot 10^{13}$. In view of an automatic computer verification these bounds still seem to be too large and some new idea is needed to complete the proof of Theorem~\ref{thm:main}.

This new idea is presented in Section \ref{sec:7}, where we prove new congruences in the case that the Diophantine quintuple $\{a,b,c,d,e\}$ contains an Euler triple. In particular, we show that at least one of the following congruences is satisfied under the hypotheses that $\{a,b,c\}$ is an Euler triple:
\begin{itemize}
 \item $l\equiv n\equiv 0 \mod s$,
 \item $m\equiv n\equiv 0 \mod t$,
 \item $n\equiv \pm r \mod st$.
\end{itemize}
With an application of Laurent's result \cite{Laurent:2008} on lower bounds for linear forms in two logarithms, the first two congruences yield $s,t<22023$ respectively (see Lemmas \ref{lem:euler1} and \ref{lem:euler2}). And the results obtained in Section \ref{sec:6} yield $r<900154$ provided that $n\equiv \pm r \mod st$. These upper bounds are small enough to use a variant of the Baker-Davenport reduction method. Thus, we may conclude that an Euler triple cannot be extended to a Diophantine quintuple.

In the case that the Diophantine triple $\{a,b,c\}$ is of degree $1$, we use again the upper bounds obtained in Section \ref{sec:6} and obtain that
$r<2315167$ and $a<93596$. These bounds are again small enough to check case by case that no Diophantine quintuple exists with the assistance of a computer. 

So we are left to the case that $\{a, b, c\}$ is of degree at least two. Here, we apply the $\partial$ operator to the triple
$\{a, b, c\}$ at least two times which yields a new triple $\{a',b',c'\}$ which is much ``smaller'' than the original one.
By the sharp upper bounds from Section \ref{sec:6} we obtain a feasible number of Diophantine triples that might be extendable to a Diophantine quintuple. Again a computer verification yields that no Diophantine quintuple exists. Thus, also no Diophantine triple of degree $>1$ can be extended to a Diophantine quintuple.

Putting these last three results together, we immediately get our main result, i.e. Theorem \ref{thm:main}.

\section{Auxiliary results}\label{sec:aux}

For a Diophantine triple $\{a, b, c\}$, we define $d_{+}$ and $d_{-}$ by 
$$
d_{+}=d_{+}(a, b, c) = a+b+c + 2abc + 2\sqrt{(ab+1)(ac+1)(bc+1)} 
$$ 
and 
$$
d_{-}=d_{-}(a,b,c) = a+b+c + 2abc -  2\sqrt{(ab+1)(ac+1)(bc+1)}. 
$$
Let $ab+1=r^2$, $ac+1=s^2$ and $bc+1=t^2$, then we have 
$$
{ad_{\pm}+1} =(rs\pm at)^2,\quad {bd_{\pm}+1} = (rt\pm bs)^2,\quad {cd_{\pm}+1} = (cr\pm st)^2. 
$$
Without loss of generality, assume that $a<b<c$. Then we have the following two Lemmas which will be frequently used in this paper
without special reference.

\begin{lemma}[Lemma 4 of  \cite{Jones:1978}]\label{lem:Jones}
 If $\{a,b,c\}$ is a Diophantine triple with $a<b<c$, then $c=a+b+2r$ or $c> 4ab$.
\end{lemma}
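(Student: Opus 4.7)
The plan is to exploit the Pell-type equation $bs^2-at^2=b-a$ that one obtains by eliminating $c$ in $ac+1 = s^2$ and $bc+1 = t^2$. Since $(r,1)$ is the fundamental solution of $X^2-abY^2=1$, the classical theory of Pell equations tells us that the positive integer solutions $(s,t)$ of this equation which come from Diophantine triples $\{a,b,c\}$ (i.e.\ those with $a\mid s^2-1$, a property preserved by the recursion $s_{n+1}=rs_n+at_n$, $t_{n+1}=bs_n+rt_n$) fall into finitely many orbits, each with a fundamental representative $(s_0,t_0)$. The first step is to establish, via the standard small-fundamental-solution bound
\begin{equation*}
s_0^2 \le \frac{(b-a)(r+\sqrt{ab})}{2\sqrt{ab}} < r^2,
\end{equation*}
that the associated value $c_0:=(s_0^2-1)/a$ lies in the range $0\le c_0 < b$.

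A direct computation with the recursion, using $ab=r^2-1$, shows that
\begin{equation*}
c_{n+1} = a + b + c_n + 2abc_n + 2rs_n t_n = d_{+}(a,b,c_n),
\end{equation*}
so every orbit of the Pell recursion is exactly the forward orbit of $c_0$ under the $d_+$ operator. For the trivial fundamental solution $(s_0,t_0)=(1,1)$ one gets $c_0=0$ and $c_1 = d_+(a,b,0) = a+b+2r$, which is the first alternative of the lemma. Any non-trivial fundamental orbit has $c_0\ge 1$ and, by the bound above, $c_0 < b < c$, so $\{a,b,c_0\}$ is itself a genuine Diophantine triple properly below $\{a,b,c\}$ and $c = c_n$ for some $n \ge 1$.

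For the second alternative, I would show that $c_{n+1}>4ab$ whenever $c_n\ge 1$. Indeed, $(ac_n+1)(bc_n+1) > ab+1$ in that case, so
\begin{equation*}
rs_n t_n = \sqrt{(ab+1)(ac_n+1)(bc_n+1)} > ab+1,
\end{equation*}
and thus $c_{n+1} \ge 2abc_n+2rs_nt_n > 2ab+2ab = 4ab$. Applying this at $n=1$ in the trivial orbit (where $c_1=a+b+2r\ge 1$) gives $c_2>4ab$, and monotonicity of $c_n$ extends the inequality to every $n\ge 2$; in any non-trivial orbit, $c_0\ge 1$ already gives $c_1>4ab$. Combining the two cases yields the claimed dichotomy.

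The main obstacle is the Pell classification itself, in particular the explicit bound $c_0<b$ on fundamental solutions — a standard but slightly delicate computation with the fundamental unit $r+\sqrt{ab}$, supplemented (when $\gcd(a,b)>1$) by a small divisibility argument to keep track of the orbits satisfying $a\mid s^2-1$. Once this structural input is in hand, everything else reduces to the recursion $c_{n+1}=d_{+}(a,b,c_n)$ and the elementary estimate $rs_n t_n > ab$.
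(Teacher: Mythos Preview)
The paper does not give its own proof of this lemma; it simply cites Jones~\cite{Jones:1978} and adds a remark explaining that Jones' original statement is $c>4c'ab$ for a certain quantity $c'$ with $c'=0$ exactly when $c=a+b\pm 2r$. Your argument is correct and in fact reconstructs Jones' approach: the quantity $c'$ in the paper's remark is precisely your predecessor $c_{n-1}$ in the Pell orbit (equivalently $d_{-}(a,b,c)$), and Jones' inequality $c>4abc'$ is what drops out of your recursion $c_{n+1}=d_{+}(a,b,c_n)$ together with the estimate $rs_nt_n=\sqrt{(ab+1)(ac_n+1)(bc_n+1)}>abc_n$. The dichotomy then follows exactly as you say, according to whether $c'=0$ or $c'\geq 1$.

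One small point worth tightening: the fundamental-solution bound you quote, $s_0^2\le\frac{(b-a)(r+\sqrt{ab})}{2\sqrt{ab}}$, differs in form from the version the paper records in its Lemma~\ref{lem:boundsx_0y_0} (which gives $s_0^2\le\frac{a(b-a)}{2(r-1)}$ for the same equation). Both yield $s_0<r$ and hence $c_0<b$, so your conclusion is unaffected, but when you write this up you may want to use a single clean reference (Nagell or the paper's Lemma~\ref{lem:boundsx_0y_0}) for the bound and state it in the matching normalisation. The divisibility caveat you flag for $\gcd(a,b)>1$ is real but harmless, since $s_{n+1}^2-1\equiv s_n^2-1\pmod{a}$ directly from the recursion.
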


\begin{remark}
 The statement of Lemma 4 in \cite{Jones:1978} is slightly different from that given here. In the notation of Jones' paper
 Lemma 4 of \cite{Jones:1978} states that $c> 4c'ab$, where $c'$ is some explicitly given quantity and it is easy to show
 that $c'=0$ if and only if $c=a+b\pm 2r$.
\end{remark}

\begin{lemma}\label{lem:d+ieq}
$4abc+c<d_+(a,b,c)<4abc+4c$.
\end{lemma}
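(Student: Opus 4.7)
The plan is to unfold the definition $d_{+}(a,b,c)=a+b+c+2abc+2rst$, where $rst=\sqrt{(ab+1)(ac+1)(bc+1)}$, and reduce both halves of the inequality to estimates on the single quantity $2rst-2abc$. Indeed,
\[
4abc+c<d_{+}(a,b,c)<4abc+4c \iff -(a+b)<2rst-2abc<3c-a-b.
\]

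For the lower bound I would simply expand
\[
(rst)^2=(ab+1)(ac+1)(bc+1)=(abc)^2+abc(a+b+c)+ab+ac+bc+1>(abc)^2,
\]
which gives $rst>abc$ and hence $2rst-2abc>0>-(a+b)$. This immediately yields $d_{+}>4abc+c$.

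For the upper bound the idea is to rationalize the difference:
\[
rst-abc=\frac{(rst)^2-(abc)^2}{rst+abc}=\frac{abc(a+b+c)+ab+ac+bc+1}{rst+abc}.
\]
Since $rst+abc>2abc$, this gives the clean estimate
\[
2(rst-abc)<(a+b+c)+\frac{1}{a}+\frac{1}{b}+\frac{1}{c}+\frac{1}{abc}.
\]
The four reciprocal terms sum to at most a small absolute constant (at most $\tfrac{3}{2}$, which is attained precisely at the smallest Diophantine triple $\{1,3,8\}$). Hence it suffices to prove $2(a+b)+\tfrac{3}{2}<2c$, i.e.\ that $c$ exceeds $a+b$ by more than $\tfrac{3}{4}$.

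This last gap is handed over to Lemma~\ref{lem:Jones}: either $c=a+b+2r$ with $r=\sqrt{ab+1}\geq 2$, so that $c\geq a+b+4$, or else $c>4ab$, in which case $c-(a+b)>4ab-(a+b)\geq 4ab-(ab+1)=3ab-1\geq 2$. In both alternatives the required gap is comfortably exceeded. The step I would flag as the only place demanding care is controlling the error term $1/a+1/b+1/c+1/(abc)$, but this is routine once one uses the structural fact that $a\geq 1, b\geq 3, c\geq 8$; there is no genuine obstacle, and the proof is essentially a one-line algebraic manipulation followed by the gap principle.
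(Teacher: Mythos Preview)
Your proof is correct. The lower bound is handled identically to the paper (both simply note $rst>abc$), but your treatment of the upper bound is genuinely different and, in fact, tidier than the paper's.

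The paper attacks the upper bound by brute force: it rewrites $d_{+}<4abc+4c$ as $2rst\leq 2abc+3c-a-b$, squares both sides, and expands everything out, eventually reducing to
\[
4ab+4ac+4bc+4\leq 16rabc+(3c-a-b)^2,
\]
which it dispatches term-by-term using $c\geq a+b+2r$ from Lemma~\ref{lem:Jones}. Your rationalization
\[
2(rst-abc)=\frac{2\bigl(abc(a+b+c)+ab+ac+bc+1\bigr)}{rst+abc}<\,(a+b+c)+\frac{1}{a}+\frac{1}{b}+\frac{1}{c}+\frac{1}{abc}
\]
isolates the error cleanly as a sum of reciprocals bounded by $\tfrac{3}{2}$ (sharp at $\{1,3,8\}$), reducing the whole upper bound to the single structural fact $c\geq a+b+1$. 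Both arguments ultimately lean on Lemma~\ref{lem:Jones}, but yours uses it much more lightly: you only need $c>a+b$, whereas the paper's expansion really consumes the full gap $c\geq a+b+2r$ to absorb the cross terms $8a^2bc+8ab^2c$. What the paper's approach buys is that it never needs the auxiliary observation $a\geq 1$, $b\geq 3$, $c\geq 8$ to control the reciprocal error; what yours buys is transparency and an explanation of \emph{why} the constant~$4$ in $4abc+4c$ is the right one.
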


\begin{remark}
 Note that the inequality in Lemma~\ref{lem:d+ieq} was stated by Dujella \cite[page 189]{Dujella:2004}. Since we could not find a proof or reference for this statement.
For the sake of completeness we give here a short proof of this inequality.
\end{remark}

\begin{proof}[Proof of Lemma \ref{lem:d+ieq}]
 The proof of the first inequality is straightforward by noting that
 \begin{align*}
 d_{+}&=a+b+c + 2abc + 2\sqrt{(ab+1)(ac+1)(bc+1)}\\
 &> a+b+c+2abc+2\sqrt{(ab)(ac)(bc)}=a+b+c+4abc\\
 &>  c+4abc\,.
 \end{align*}
 
 Now we turn to the second inequality. By collecting all non-square root terms on the left hand side and taking squares on both sides of the inequality we get the inequality
 $$4(ab+1)(ac+1)(bc+1)\leq (2abc+3c-a-b)^2.$$
 After expanding this inequality we are left to prove that
 $$8b^2ac+8a^2bc+4ab+4ac+4bc+4\leq 8c^2ab+(3c-a-b)^2$$
 holds. By Lemma \ref{lem:Jones}, we have $c\geq a+b+2r$. Thus, we have to check that the inequality
 $$4ab+4ac+4bc+4\leq 16rabc+(3c-a-b)^2$$
 holds. However, it is easy to see that
 $$4ab+4ac+4bc+4\leq 4abc+4abc+4abc+4abc\leq 16rabc+(3c-a-b)^2.$$
\end{proof}

The following two recent results are essential in the proof of Theorem \ref{thm:main} and will be used frequently without any special reference. In particular
the inequality $b>3a$ will be used several times.

\begin{lemma}[Theorem 1.1 of \cite{Cipu-Filipin-Fujita}, see also \cite{Cipu-Fujita}]\label{lem:b3a}
 Let $\{a,b,c,d,e\}$ be a Diophantine quintuple with $a < b < c < d < e$. Then $b > 3a$. Moreover, if $c > a+b+2\sqrt{ab+1}$ then $b > \max\{24 a, 2 a^{3/2}\}$.
\end{lemma}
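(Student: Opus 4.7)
The plan is to argue by contradiction, assuming $b \le 3a$ (or $b \le \max\{24a,\,2a^{3/2}\}$ in the ``moreover'' part) and deriving a contradiction from the existence of the two extra elements $d,e$ of the quintuple. The fundamental dichotomy from Lemma~\ref{lem:Jones} splits the analysis into the Euler case $c = a+b+2r$ and the large-gap case $c > 4ab$. Note that the hypothesis $c > a+b+2\sqrt{ab+1}$ of the ``moreover'' part is precisely the condition excluding the Euler case, so there only the large-gap regime has to be treated.

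In the large-gap case, I would first invoke Theorem~\ref{thm:fujita} to conclude $d = d_+(a,b,c)$, so by Lemma~\ref{lem:d+ieq} we have $d > 4abc$. An analogous gap principle applied to the quadruple $\{a,b,c,e\}$, obtained from a Pell-type analysis of the three equations $ae+1 = \square$, $be+1 = \square$, $ce+1 = \square$ together with the fact that $e \ne d$, forces $e > 4bcd$. Plugging these two lower bounds into the fundamental-solution structure of the Pell equations $aW^2 - eU^2 = a-e$ and $bW^2 - eV^2 = b-e$ yields an inequality of the shape $b/a \ge f(a)$ for some function $f$ that exceeds $3$ (respectively $24$ and $2\sqrt{a}$) once $a$ is large enough; the finitely many remaining small values of $a$ can be dispatched by a direct computer check over all admissible pairs $(a,b)$ with small $b/a$.

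The Euler case $c = a+b+2r$ is needed only for the first inequality $b > 3a$. Here the plan is to substitute the explicit form of $c$ and, again via Fujita's theorem, expand $d = d_+(a,b,a+b+2r)$ as a polynomial in $a, b, r$. Reducing the identity $bd+1 = (rt+bs)^2$ modulo $a$ and comparing with the corresponding reduction of $ae+1 = \square$ then produces a congruence that rules out $b \le 3a$ except for a short list of sporadic configurations handled by inspection. I expect the main obstacle to be precisely this Euler sub-case: the gap principles provide very little room here, so the contradiction has to be extracted from a careful interplay of Pell-equation congruences and the known explicit form of $d_+$, rather than from the sheer size comparisons available in the large-gap case.
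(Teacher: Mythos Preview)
The present paper does not prove this lemma at all: it is quoted verbatim as Theorem~1.1 of \cite{Cipu-Filipin-Fujita} (with \cite{Cipu-Fujita} as a precursor) and used as a black box in the auxiliary-results section. There is therefore no ``paper's own proof'' here to compare your proposal against.

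As for the proposal itself, it has genuine gaps. Your assertion that ``an analogous gap principle applied to the quadruple $\{a,b,c,e\}$ \ldots\ forces $e>4bcd$'' is not justified: the inequality $d_+>4abc$ of Lemma~\ref{lem:d+ieq} rests on $d$ being the \emph{regular} extension $d_+(a,b,c)$, and nothing in your setup tells you that $e$ is the regular extension of any subtriple containing $d$. More seriously, the step ``plugging these two lower bounds into the fundamental-solution structure of the Pell equations \ldots\ yields an inequality of the shape $b/a\ge f(a)$'' is a black box where all the real content should be; you neither identify the mechanism nor indicate why the resulting $f$ would exceed $3$ (let alone $24$ or $2\sqrt a$). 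The actual arguments in \cite{Cipu-Fujita,Cipu-Filipin-Fujita} combine congruence analysis of the recursive Pell sequences, lower bounds for linear forms in logarithms, and substantial reduction/computation --- none of which appear in your outline. Likewise, in the Euler sub-case a single reduction of $bd+1=(rt+bs)^2$ modulo $a$ does not by itself constrain $b/a$; you would need to explain which congruence on the exponents of the Pell recursions you are exploiting and why it fails when $b\le 3a$.
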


\begin{lemma}[Theorem 1.5 of \cite{Fujita-Miyazaki}]\label{lem:cb}
 Let $\{a, b, c, d\}$ be a Diophantine quadruple with $a < b < c < d$. If $b < 2a$ and $c \geq 9.864 b^4$ or $2a \leq b \leq 12a$ and $c \geq 4.321b^4$ , or $b > 12a$ and
$c \geq 721.8b^4$ , then $d = d_+$. 
\end{lemma}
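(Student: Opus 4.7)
The plan is to adapt the standard Pell-equation / Baker's method machinery that already works for quadruples and sharpen it using the size hypothesis $c \gtrsim b^4$. First I would attach the Pell equations
\begin{align*}
a Y^2 - d X^2 &= a - d,\\
b W^2 - d Y^2 &= b - d,\\
c W^2 - d Z^2 &= c - d,
\end{align*}
where the $X, Y, Z$ come from the Diophantine triple $\{a,b,c\}$ with $ab+1 = r^2$, $ac+1 = s^2$, $bc+1 = t^2$, and $W^2 = d \cdot (\text{something}) + 1$. Each of these Pell equations has finitely many fundamental solutions, and the full set of solutions splits into binary recurrence sequences. I would parametrize the general solution as in Section~\ref{sec:4} (of the main paper), obtaining exponents $l, m, n$ such that
\[
W\sqrt{a} + X\sqrt{d} = (\varepsilon\sqrt{a}+\sqrt{d})(x+\sqrt{ad})^{2l},\quad W\sqrt{b} + Y\sqrt{d} = (\varepsilon\sqrt{b}+\sqrt{d})(y+\sqrt{bd})^{2m},
\]
and analogously for $n$, with $xd+1, yd+1, zd+1$ squares.

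Next I would derive the standard congruence relations by comparing the sequences modulo $d$ (or modulo $c$), namely $W \equiv \pm W_0 \pmod{d}$ in each representation, which forces coincidences of small indices from different recurrences. Combined with the trivial observation that $d = d_+$ corresponds to a specific pair of minimal fundamental solutions, one obtains that $d \neq d_+$ implies the exponents $l, m, n$ must all exceed certain lower bounds coming from a gap principle. The crucial point is that under the size assumption $c \geq C b^4$, the elementary gap principle (essentially that consecutive extensions grow by a factor $\approx 4abc$) shows that either $d \leq (\text{small polynomial in } b, c)$, forcing $d = d_+$, or else $l, m, n \geq 2$ and actually $m, n$ are quite large in terms of $a, b, c$.

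Assuming the latter, I would linearize: form the expression
\[
\Lambda = l \log(x + \sqrt{ad}) - m \log(y + \sqrt{bd}) + \log\!\left(\frac{\sqrt{b}(\varepsilon\sqrt{a}+\sqrt{d})}{\sqrt{a}(\varepsilon\sqrt{b}+\sqrt{d})}\right)
\]
and similarly a second three-term form involving $n$. Upper bounds for $|\Lambda|$ come from direct comparison of the two representations of $W$; these upper bounds are doubly exponentially small in $l, m, n$. Lower bounds come from Matveev's theorem (or Laurent's sharper two-log bound where applicable). Balancing the two and using $c \geq C b^4$ to make the elementary upper bound dominate would yield a contradiction unless $l = m = n = 1$, which is precisely the regular case $d = d_+$.

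The main obstacle will be the case split $b < 2a$, $2a \leq b \leq 12a$, $b > 12a$: each regime changes the effective size of $r, s, t$ relative to $a, b, c$, which in turn shifts the constants appearing in both the gap principle and the linear-forms estimate. In particular, when $b$ is only a little bigger than $a$, the factor $r \approx \sqrt{ab}$ is too close to $a$ for the crude gap principle, so the threshold for $c$ must be raised (hence the $9.864$ versus $721.8$ in the three cases). I would therefore perform the Baker estimate once in general, then optimize the admissible constant $C$ in each of the three regimes separately, being careful that in the $b < 2a$ regime the rougher bound $b > 3a$ from Lemma~\ref{lem:b3a} is \emph{not} available, so the argument must rely purely on the size hypothesis on $c$.
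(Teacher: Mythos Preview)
This lemma is not proved in the present paper at all: it is quoted verbatim as Theorem~1.5 of Fujita--Miyazaki \cite{Fujita-Miyazaki} and used as a black box. So there is no ``paper's own proof'' to compare against; your sketch is an attempt to reconstruct the Fujita--Miyazaki argument.

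On the merits of the sketch: the overall architecture (attach Pell equations to the quadruple, parametrize solutions by binary recurrences, extract congruences and a gap principle to force the exponents large, then bound a linear form in three logarithms via Matveev and derive a contradiction) is indeed the strategy Fujita and Miyazaki use, so in that sense you are on the right track. Two points, however, need correction. First, your Pell system is garbled: for a quadruple $\{a,b,c,d\}$ with $ad+1=x^2$, $bd+1=y^2$, $cd+1=z^2$, the relevant equations are obtained by eliminating $d$, giving $ay^2-bx^2=a-b$, $az^2-cx^2=a-c$, $bz^2-cy^2=b-c$; there is no separate variable $W$ here, and the recurrences are indexed by exponents attached to $x,y,z$, not to $X,Y,Z$ coming from the triple $\{a,b,c\}$. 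Second, your explanation of the case split is backwards: the largest threshold ($721.8$) occurs when $b>12a$, not when $b$ is close to $a$, because when $b/a$ is large the logarithmic heights entering Matveev's bound become less favourable relative to the gap principle. The regime $b<2a$ actually admits the \emph{smaller} constant $9.864$ (and the middle regime the smallest, $4.321$), contrary to what you wrote.
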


Combining these two results, we obtain the following result.

\begin{lemma}\label{lem:acb}
  Let $\{a,b,c,d,e\}$ be a Diophantine quintuple with $a < b < c < d < e$. Then, we have $ac<180.45 b^3$.  
\end{lemma}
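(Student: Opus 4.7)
My plan is to combine Lemmas~\ref{lem:b3a} and \ref{lem:cb} via Theorem~\ref{thm:fujita}. The starting point is that by Theorem~\ref{thm:fujita} the quintuple $\{a,b,c,d,e\}$ with $a<b<c<d<e$ forces $d=d_{+}(a,b,c)$, so the subset $\{a,b,c,e\}$ is itself a Diophantine quadruple with $a<b<c<e$ and $e>d=d_{+}(a,b,c)$; in particular $e\neq d_{+}(a,b,c)$. Applying the contrapositive of Lemma~\ref{lem:cb} to this quadruple produces one of three alternatives: $b<2a$ together with $c<9.864\,b^{4}$, or $2a\leq b\leq 12a$ together with $c<4.321\,b^{4}$, or $b>12a$ together with $c<721.8\,b^{4}$.

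Next I would apply Lemma~\ref{lem:b3a}. Its first part, $b>3a$, immediately rules out the alternative $b<2a$. Its second part says that whenever $\{a,b,c\}$ is non-Euler, i.e.\ $c>a+b+2r$, one has $b>24a$. Hence in the range $3a<b\leq 12a$ the triple $\{a,b,c\}$ must be Euler, and $c=a+b+2r\leq(\sqrt{a}+\sqrt{b})^{2}+2$; together with $a<b/3$ this makes $c$ of order $b$ and so $ac$ of order $b^{2}$, comfortably below $180.45\,b^{3}$. The same estimate trivially handles the Euler sub-case of the third alternative, where $a<b/12$ makes $c$ even smaller compared to $b$.

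The only delicate case is therefore $b>24a$ with $\{a,b,c\}$ non-Euler and $c<721.8\,b^{4}$. Here Lemma~\ref{lem:Jones} also supplies $c>4ab$, and combining the two one-sided bounds on $c$ yields $4ab<721.8\,b^{4}$, i.e.\ $a<180.45\,b^{3}$; this is the origin of the constant in the statement, and together with the control $a<b/24$ it should be packaged into the desired estimate $ac<180.45\,b^{3}$. The step I expect to be the main obstacle is precisely this final packaging, because the crude substitution $ac<(b/24)(721.8\,b^{4})=30.075\,b^{5}$ is far weaker than the claim: to extract the clean exponent $b^{3}$ one must balance $c>4ab$ against $c<721.8\,b^{4}$ and trade off the size of $a$ against the size of $c$, ruling out the configurations where $c$ sits near the upper extreme $721.8\,b^{4}$ of its admissible range.
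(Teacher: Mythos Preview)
Your argument has a real gap, and you have already put your finger on it: applying Lemma~\ref{lem:cb} to the quadruple $\{a,b,c,e\}$ gives at best $c<721.8\,b^{4}$, and from this together with $a<b/24$ (or even $a<(b/2)^{2/3}$ from Lemma~\ref{lem:b3a}) you can only reach $ac=O(b^{5})$ or $ac=O(b^{14/3})$. The inequality $4ab<c<721.8\,b^{4}$ that you write down yields $a<180.45\,b^{3}$, which is vacuous since already $a<b$; there is no ``balancing'' that converts it into $ac<180.45\,b^{3}$. Indeed nothing in your list of inequalities excludes, say, $a=1$ and $c$ of order $b^{4}$, which violates the conclusion.

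The paper closes this gap by applying Lemma~\ref{lem:cb} to a different sub-quadruple, namely $\{a,b,d,e\}$ rather than $\{a,b,c,e\}$. Since $b>3a$ by Lemma~\ref{lem:b3a} and this quadruple is irregular, the contrapositive of Lemma~\ref{lem:cb} gives $d<721.8\,b^{4}$. Now the missing factor appears automatically from Lemma~\ref{lem:d+ieq}: because $d=d_{+}(a,b,c)>4abc$, one obtains $4abc<721.8\,b^{4}$ and hence $ac<180.45\,b^{3}$ in one line. No Euler/non-Euler case split and no second appeal to Lemma~\ref{lem:b3a} are needed. The moral is that the role of ``$c$'' in Lemma~\ref{lem:cb} should be played by $d$, so that the built-in lower bound $d>4abc$ does the work that your attempted trade-off could not.
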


\begin{proof}
Assume that $\{a, b, c, d, e\}$ is a Diophantine quintuple with $a < b < c < d < e$. By Fujita's result \cite{Fujita-1} (see Theorem \ref{thm:fujita}),
the Diophantine quadruple $\{a, b, c, d\}$ is regular. Consider the irregular Diophantine quadruple $\{a, b, d, e\}$. From Lemma~\ref{lem:b3a}, we have $b>3a$.
Hence, Lemma~\ref{lem:cb} provides $d<721.8b^4$. On the other hand, we have $d=d_{+} >4abc$.
Thus, we obtain $4abc<721.8b^4$ from which we conclude that $ac<180.45 b^3$.  
\end{proof}

In several concrete computations lower bounds for $b$, $c$ and $d$ are needed. Several authors have excluded many possibilities
for a pair $(a, b)$ to be extendable to a Diophantine quintuple. Using recent results due to Cipu and Fujita \cite{Cipu-Fujita} and Filipin et al. \cite{FFT2}, we can show the following.

\begin{lemma}\label{lem:min_bcd}
 If $\{a, b, c, d, e\}$ is a Diophantine quintuple, with $a<b<c<d<e$, then $b\geq 15$, $c\geq 24$ and $d\geq 1520$.
\end{lemma}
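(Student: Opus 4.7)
The plan is to combine the lower bound $b > 3a$ from Lemma \ref{lem:b3a} with explicit non-extendability results for small Diophantine pairs from the literature (notably \cite{FFT2}, \cite{Cipu-Fujita}, and the results collected in Table \ref{tbl:1}), and then use Lemma \ref{lem:Jones}, Theorem \ref{thm:fujita} and Lemma \ref{lem:d+ieq} to deduce the corresponding bounds on $c$ and $d$.

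For $b \geq 15$, I would enumerate all Diophantine pairs $\{a, b\}$ satisfying $b > 3a$ and $b < 15$. A short case analysis on $a = 1, 2, 3, 4$ (with $a \geq 5$ already forcing $b > 15$) shows that the only candidates are $\{1, 8\}$ and $\{2, 12\}$. Since $b - 1 = 7$ is prime for $\{1, 8\}$, the first pair is handled by the result for $\{1, b\}$ with $b - 1$ prime listed in Table \ref{tbl:1}; since $12 = 4 \cdot 2 + 4$, the second pair has the form $\{k, 4k + 4\}$ and is excluded by \cite{FFT2} (or \cite{HPST}). Both results yield that the pair in question cannot be extended to a Diophantine quintuple.

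For $c \geq 24$, assuming $b \geq 15$, Lemma \ref{lem:Jones} gives the dichotomy $c = a + b + 2r$ or $c > 4ab$. In the first case, $r = \sqrt{ab+1} \geq \sqrt{16} = 4$, hence $c \geq 1 + 15 + 8 = 24$; in the second case, $c > 4 \cdot 15 = 60$. Either way $c \geq 24$, with equality only for the triple $\{1, 15, 24\}$.

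For $d \geq 1520$, I would invoke Theorem \ref{thm:fujita} to get $d = d_+(a, b, c)$, and then compute the minimum of $d_+$ over admissible triples. By Lemma \ref{lem:d+ieq}, $d_+ > 4abc$, so $d_+ < 1520$ forces $abc < 380$. A direct check under the constraints $a \geq 1$, $b \geq 15$, $c \geq 24$, and $ab + 1$, $ac + 1$, $bc + 1$ all squares, shows the only such triple is $\{1, 15, 24\}$, for which one computes $d_+ = 1 + 15 + 24 + 720 + 2 \cdot 4 \cdot 5 \cdot 19 = 1520$ exactly. The main obstacle is verifying that the cited non-extendability results together with Table \ref{tbl:1} indeed eliminate both $\{1, 8\}$ and $\{2, 12\}$; modulo this, the remaining steps are elementary enumeration and arithmetic.
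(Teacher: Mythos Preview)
Your proposal is correct and follows essentially the same approach as the paper: use $b>3a$ together with the non-extendability results from \cite{FFT2} and \cite{Cipu-Fujita} to force $b\ge 15$, then Lemma~\ref{lem:Jones} for $c\ge 24$, and finally Theorem~\ref{thm:fujita} with $d_+(1,15,24)=1520$ for the bound on $d$. The only cosmetic difference is that you invoke \cite{HPTY} to discard $\{1,8\}$, whereas the paper (and you, for $\{2,12\}$) already has $\{1,8\}=\{k,4k+4\}$ with $k=1$ covered by \cite{FFT2}, so the appeal to \cite{HPTY} is unnecessary.
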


\begin{proof}
 Due to a result of Cipu and Fujita \cite{Cipu-Fujita} (cf. Lemma \ref{lem:b3a}) and a result due to Filipin et al. \cite{FFT2} we may consider only Diophantine quintuples
 such that $b>3a$ and $(a,b)\neq (k,4k\pm 4)$.
 
 First, note that if $r>15$ then $b>15$. Therefore, a simple computer search involving all Diophantine pairs $(a, b)$ not excluded by \cite{Cipu-Fujita} and \cite{FFT2} such that $2\leq r \leq 15$ shows that the Diophantine pair $\{1,15\}$ yields the minimal $b$. Since $a+b+2r$ strictly increases with $a$, we deduce that the smallest not excluded $c$ is $1+15+2\sqrt{1\cdot15+1}=24$. Therefore, $\{1,15,24\}$ is the smallest Diophantine triple which is possibly extendable to a Diophantine quintuple. Since any Diophantine triple $\{a, b, c\}$ which is extendable to a Diophantine quintuple $\{a, b, c, d, e\}$ with $a<b<c<d<e$ satisfies $d=d_+(a, b, c)$ (cf. Theorem \ref{thm:fujita}), we deduce that $d\geq d_+(1,15,24)=1520$.
\end{proof}

\section{An operator on Diophantine triples}\label{sec:3}

In this section we give a classification of Diophantine triples by defining the $\partial$-operator between Diophantine triples.
We start with a useful classification for Euler triples in terms of $d_{-}$.

\begin{proposition}\label{pro:2-1}
The Diophantine triple $\{a, b, c\}$ is an Euler triple if and only if $d_{-}(a, b, c) = 0$.
\end{proposition}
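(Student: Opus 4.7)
The plan is to establish the following algebraic identity, from which both directions fall out immediately:
\begin{equation*}
d_+(a,b,c)\,d_-(a,b,c) \;=\; (a+b+c+2abc)^2 - 4(ab+1)(ac+1)(bc+1) \;=\; (c-a-b)^2 - 4r^2.
\end{equation*}
This is verified by direct expansion: the terms $4a^2b^2c^2$ and $4abc(a+b+c)$ appearing in $(a+b+c+2abc)^2$ are exactly matched by their counterparts in $4(ab+1)(ac+1)(bc+1)$, so the difference collapses to
\begin{equation*}
a^2+b^2+c^2 - 2(ab+ac+bc) - 4 \;=\; (c-a-b)^2 - 4(ab+1) \;=\; (c-a-b-2r)(c-a-b+2r),
\end{equation*}
using $r^2 = ab+1$.

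For the forward direction, if $\{a,b,c\}$ is an Euler triple then by definition $c = a+b+2r$, so the right-hand side of the identity vanishes. Hence $d_+ d_- = 0$, and since $d_+ = a+b+c+2abc + 2rst$ is a sum of strictly positive terms, we conclude $d_- = 0$.

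For the converse, $d_-(a,b,c) = 0$ trivially implies $d_+ d_- = 0$, so the identity forces $(c-a-b)^2 = 4r^2$, i.e. $c = a+b+2r$ or $c = a+b-2r$. Under the standing hypothesis $a < b < c$, the second possibility leads to a contradiction: $c = a+b-2r > b$ would require $a > 2r$, hence $a^2 > 4r^2 = 4(ab+1)$; but $b > a$ gives $4ab+4 > 4a^2+4 > a^2$, a contradiction. Therefore $c = a+b+2r$, and $\{a,b,c\}$ is an Euler triple.

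There is no serious obstacle here; the entire proof rests on recognising the factorisation of $d_+ d_-$, after which both implications are straightforward. The only mild care is in excluding the spurious root $c = a+b-2r$ in the converse, which is handled by the size constraint $c > b > a$.
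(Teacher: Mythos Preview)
Your proof is correct and rests on the same algebraic identity as the paper's: both reduce to the computation
\[
(a+b+c+2abc)^2 - 4(ab+1)(ac+1)(bc+1) = (c-a-b)^2 - 4r^2.
\]
The only cosmetic difference is that the paper handles the forward direction by direct substitution (using $s=a+r$, $t=b+r$ to compute $d_-$ explicitly), whereas you package both directions uniformly via the product $d_+d_-$; your exclusion of the root $c=a+b-2r$ is also spelled out in more detail than the paper's one-line dismissal.
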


\begin{proof}
 If $\{a, b, c\}$ is an Euler triple, then  $c=a+b+2r$, with $r=\sqrt{ab+1}$. Moreover, we have 
$$
\sqrt{ac+1}=a+r ,\quad \sqrt{bc+1}= b+r. 
$$
Using these identities, we get
\begin{align*}
d_{-}(a, b, c) &= a+b+c + 2abc -  2\sqrt{(ab+1)(ac+1)(bc+1)}\\
&= 2a+2b+2r + 2ab(a+b+2r) - 2r(a+r)(b+r)\\
&= 2a+2b+2r + 2a^2b+2ab^2+4abr - 2r(2ab+ar+br+1)\\
&= 2a+2b+2r + 2a^2b+2ab^2 - 2a(ab+1) - 2b(ab+1)-2r=0.
\end{align*}

On the other hand, assuming that $d_{-}(a,b, c)=0$ implies
$$
(a+b+c + 2abc)^2 = 4(ab+1)(ac+1)(bc+1).
$$
Expanding and simplifying this equation, we get
$$
a^2+b^2+c^2-2ab-2ac-2bc=4.
$$ 
Some further manipulations yield
$$
(c-(a+b))^2 = 4(ab+1) = 4r^2. 
$$
Thus, we get
$$
c=a+b\pm 2r. 
$$
Since $c>b>a$ we may omit the "$-$" case, and the triple $\{a, b, c\}$ is indeed an Euler triple.
\end{proof}

The following formulas gathered in the next proposition will be also useful. Loosely speaking these formulas
show that $d_{-}$ and $d_{+}$ are in some sense the inverse functions of each other.

\begin{proposition}\label{pro:2-2}
Let $\{a, b, c\}$ be a Diophantine triple with $c=\max\{a, b, c\}$. We have 
$$
a=d_{-}(b,c,d_{+}(a,b,c)),\quad b=d_{-}(a,c,d_{+}(a,b,c)),\quad c=d_{-}(a,b,d_{+}(a,b,c)).
$$
Moreover, if $\{a,b,c\}$ is not an Euler triple, then we have
$$
c=d_{+}(a,b,d_{-}(a,b,c)).
$$
In particular, $\{a,b,d_{-1}(a,b,c),c\}$ is a regular Diophantine quadruple.
\end{proposition}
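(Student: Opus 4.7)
My plan is to derive all the identities from the symmetric polynomial
$$
P(a,b,c,d) := a^2+b^2+c^2+d^2 - 2(ab+ac+ad+bc+bd+cd) - 4abcd - 4,
$$
which is invariant under every permutation of its arguments. The first task is a direct expansion showing
$$
(a+b+c+2abc)^2 - 4(ab+1)(ac+1)(bc+1) = a^2+b^2+c^2-2ab-2ac-2bc-4,
$$
so that $d_+(a,b,c)$ and $d_-(a,b,c)$ are exactly the two roots of $P(a,b,c,z) = 0$ viewed as a quadratic in $z$. In particular $P(a,b,c,d) = 0$ whenever $d \in \{d_-(a,b,c), d_+(a,b,c)\}$.

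For the first three identities I set $d = d_+(a,b,c)$, so that $P(a,b,c,d) = 0$. By the symmetry of $P$, reading this identity as a quadratic in $a$ shows that $a \in \{d_-(b,c,d), d_+(b,c,d)\}$; since by Vieta the other root equals $2(b+c+d+2bcd) - a$, which manifestly exceeds $d$ and hence $a$, we conclude $a = d_-(b,c,d)$. The same reasoning with $b$ or $c$ in place of $a$ yields the remaining two identities.

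For the fourth identity I assume $\{a,b,c\}$ is not Euler, so that $d' := d_-(a,b,c) > 0$ by Proposition \ref{pro:2-1}, and again invoke $P(a,b,c,d') = 0$. Read as a quadratic in $c$, this forces $c \in \{d_-(a,b,d'), d_+(a,b,d')\}$ with companion root $c^* := 2(a+b+d'+2abd') - c$. The main (and only nontrivial) obstacle is to verify $c > c^*$, equivalently
$$
c - a - b > d'(1 + 2ab).
$$
Here the plan is to use the Vieta relation $d' \cdot d_+(a,b,c) = (c-a-b)^2 - 4(ab+1)$ together with the lower bound $d_+(a,b,c) > 4abc$ from Lemma \ref{lem:d+ieq} to obtain $d' < c/(4ab)$, and then combine this with $c > 4ab$ from Lemma \ref{lem:Jones} (which applies in the non-Euler case) to close the inequality; the only remaining input is the elementary bound $2ab - 1 > a + b$, valid for every Diophantine pair $\{a,b\}$ with $a<b$ since then necessarily $b \geq 3$.

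The ``in particular'' statement then falls out at once: the identities $ad'+1=(rs-at)^2$, $bd'+1=(rt-bs)^2$ and $cd'+1=(cr-st)^2$ recorded at the start of the section guarantee that $\{a,b,d',c\}$ is a Diophantine quadruple, and since $d' < c$ (also supplied by the bound $d' < c/(4ab)$) and $c = d_+(a,b,d')$ by the fourth identity, this quadruple is regular by definition.
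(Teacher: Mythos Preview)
Your proof is correct and takes a genuinely different route from the paper. The paper argues by treating $d_+(a,b,\cdot)$ as a real function: it writes $d_+(a,b,x)=y$, squares to get a quadratic in $x$, reads off the two roots $d_\pm(a,b,y)$, and discards $d_+(a,b,y)$ using the assumption $y>\max\{a,b,x\}$; for the fourth identity it simply says ``a similar trick'' without spelling out the needed size comparison. You instead work with the fully symmetric quartic polynomial $P(a,b,c,d)$ and use Vieta's formulas, which makes the symmetry in $a,b,c$ transparent and reduces everything to deciding, for each identity, which of the two roots is the relevant variable.

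The payoff of your approach is that you actually carry out the nontrivial step the paper suppresses: for $c=d_+(a,b,d')$ you must show $c>a+b+d'(1+2ab)$, and your chain $d'<c/(4ab)$ (from $d'd_+=(c-a-b)^2-4r^2<c^2$ and $d_+>4abc$) combined with $c>4ab$ and $2ab-1>a+b$ closes this cleanly. The paper's approach is shorter to state but hides precisely this verification; your approach is more explicit and arguably more illuminating, since $P(a,b,c,d)=0$ is exactly the classical regularity condition for a Diophantine quadruple.
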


\begin{proof}
We fix $a$ and $b$ and consider $d_+(a,b,x):\RR^+\rightarrow \RR^+$ as a function. 
Let us consider the following equation with the unknown $x$ and with fixed $y\in \RR^+$ such that $y>\max\{a,b,x\}$:
$$a+b+x + 2abx +  2\sqrt{(ab+1)(ax+1)(bx+1)}=y.$$
This equation yields the quadratic equation 
$$(a+b+x + 2abx-y)^2=4(ab+1)(ax+1)(bx+1) $$
 and solving for $x$ yields the two solutions
$$x=a+b+y + 2aby + 2\sqrt{(ab+1)(ay+1)(by+1)}$$
and
$$x=a+b+y + 2aby - 2\sqrt{(ab+1)(ay+1)(by+1)}.$$
Obviously, the first solution can be discarded since we assume that $y>x$, while the second solution yields
$$x=d_{-}(a,b,y)=d_{-}(a,b,d_+(a,b,x)).$$
Since the formulas for $d_+$ and $d_-$ are symmetric in $a$, $b$ and $c$, we obtain the first three formulas.
To obtain the fourth formula we may use a similar trick. The last statement is a direct consequence from the fourth 
formula. Note that we have $c=d_+=d_+(a,b,d_-(a,b,c))$.
\end{proof}

For our next step the following observation will be useful:

\begin{lemma}\label{lem:well-def}
 Let $\{a,b,c\}$ be a Diophantine triple, then $d_{-}(a,b,c)\neq a,b,c$.
\end{lemma}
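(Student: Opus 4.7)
My plan is to argue by contradiction, reading the conclusion directly off the three identities
$$ad_{-}+1 = (rs-at)^{2}, \quad bd_{-}+1 = (rt-bs)^{2}, \quad cd_{-}+1 = (cr-st)^{2}$$
recorded in the paragraph immediately preceding Proposition~\ref{pro:2-1}. The idea is that if $d_{-}(a,b,c)$ were to equal $a$, then the first identity would force $a^{2}+1$ to be a perfect square; analogously, $d_{-}=b$ or $d_{-}=c$ would make $b^{2}+1$ or $c^{2}+1$ a perfect square. Each of these conclusions is then refuted by the elementary observation that for a positive integer $n$ one has $n^{2} < n^{2}+1 < (n+1)^{2}$, so $n^{2}+1$ lies strictly between consecutive squares and hence is not itself a square.

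Before invoking the identities, I would briefly note that $d_{-}$ is genuinely an integer, so the question $d_{-}\in\{a,b,c\}$ is meaningful as an equation of integers: since $(ab+1)(ac+1)(bc+1)=(rst)^{2}$, the definition of $d_{-}$ collapses to $d_{-}=a+b+c+2abc-2rst\in\ZZ$. Apart from this piece of bookkeeping, there is essentially no obstacle: the entire lemma reduces to the elementary fact that $n^{2}+1$ is never a perfect square for $n\geq 1$. The only step that in principle warrants independent verification is the three displayed identities themselves, but they follow from a direct algebraic expansion using $ab+1=r^{2}$, $ac+1=s^{2}$, $bc+1=t^{2}$ and the defining formula for $d_{-}$, and are already taken for granted in the proof of Proposition~\ref{pro:2-1}.
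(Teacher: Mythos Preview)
Your argument is correct and is considerably more elementary than the paper's. You observe that the standing identities $ad_{-}+1=(rs-at)^{2}$, $bd_{-}+1=(rt-bs)^{2}$, $cd_{-}+1=(cr-st)^{2}$ (recorded in Section~\ref{sec:aux}) immediately force $a^{2}+1$, $b^{2}+1$, or $c^{2}+1$ to be a perfect square in the respective cases $d_{-}=a,b,c$, and then dispose of all three by the trivial remark that $n^{2}+1$ sits strictly between consecutive squares for $n\geq 1$.

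The paper instead treats the three cases asymmetrically: it first rules out $d_{-}=c$ by verifying the inequality $2rst>2abc+a+b$ directly, and then, for $d_{-}=a$ (and analogously $d_{-}=b$), it solves the resulting quadratic in $c$, invokes Lemma~\ref{lem:Jones} to force $c>4ab$, and derives a contradiction via the auxiliary bounds $b>3a$ (Lemma~\ref{lem:b3a}) and $b\geq 15$ (Lemma~\ref{lem:min_bcd}). Those last two lemmas are stated only for triples sitting inside a Diophantine quintuple, so strictly speaking the paper's argument only covers that situation, which is all that is needed downstream. Your proof, by contrast, applies to an arbitrary Diophantine triple with no extra hypotheses, and avoids all of Lemmas~\ref{lem:Jones}, \ref{lem:b3a}, \ref{lem:min_bcd}. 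It is a genuine simplification.
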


\begin{proof}
 Without loss of generality we assume that $a<b<c$.

 We start with the claim that $2rst>2abc+a+b$, which immediately shows that $d_{-}(a,b,c)<c$, hence $d_{-}(a,b,c)\neq c$. Indeed to verify our claim we have
 to show that
 \begin{align*}
 4(ab+1)(ac+1)(bc+1)&=4a^2b^2c^2+4a^2bc+4ab^2c+4abc^2\\
 &\;\;\;+4ab+4ac+4bc+4\\
 &>\left(2abc+a+b\right)^2\\
 &=4a^2b^2c^2+4a^2bc+4ab^2c+a^2+b^2+2ab.
 \end{align*}
 Hence we have to show that
 $$4abc^2+2ab+4ac+4bc+4>a^2+b^2,$$
 which can be easily seen.
 
 Therefore, let us consider the case that $d_{-}(a,b,c)=a$. Note that $d_{-}(a,b,c)=a$ 
 can be seen as a quadratic equation in $c$. Indeed the equation is equivalent to $b+c+2abc=2\sqrt{(ab+1)(ac+1)(bc+1)}$.
 Squaring both sides and solving for $c$ yields
 $$c=a+b\pm \sqrt{4ab+4+4a}.$$
 Since we assume that $c>a,b$ we may assume that $c=a+b+ \sqrt{4ab+4+4a}>a+b+2r$. Thus, by Lemma \ref{lem:Jones}, we have that
 $$c=a+b+\sqrt{4ab+4+4a}>4ab$$
 and by some further manipulations we obtain
 \begin{equation}\label{eq:well-def-ieq}
 4ab+4a+4>(4ab-a-b)^2=16a^2b^2-8a^2b-8ab^2+a^2+b^2+2ab>a^2+b^2+2ab.
 \end{equation}
By Lemma \ref{lem:b3a} we know that $b>3a$ and therefore we get
 $$4ab+4a+4>a^2+3ab+2ab$$
 hence $4a+4>4a^2$ and therefore $a=1$. But if we plug in $a=1$ into Inequality~\eqref{eq:well-def-ieq}
 we obtain $4b+4>1+b^2+2b$ which yields a contradiction unless $b<4$, but due to Lemma \ref{lem:min_bcd} we may assume that $b\geq 15$.
 
 A similar argument shows that also $b=d_{-}(a,b,c)$ is impossible.
\end{proof}

To any Diophantine triple $\{a,b,c\}$, we may add $d_{+}$ to obtain a regular Diophantine quadruple $\{a,b,c,d_{+}\}$. In particular,
we obtain from the triple $\{a,b,c\}$ three new Diophantine triples $\{a,b,d_{+}\}$, $\{a,c,d_{+}\}$ and $\{b,c,d_{+}\}$ related to  $\{a,b,c\}$.
From a naive point of view we may consider the triples $\{a,b,d_{+}\}$, $\{a,c,d_{+}\}$ and $\{b,c,d_{+}\}$ to be farther away from being
an Euler triple than the original triple $\{a,b,c\}$. Now, let us reverse these observations. Thus, given a non-Euler triple $\{a,b,c\}$ 
we want to get a new Diophantine triple $\{a',b',c'\}$ that is closer to the property of being an Euler triple. In order to specify these ideas
we introduce the $\partial$-operator:

\begin{definition}
We define $\partial$ to be an operator which sends a non-Euler triple $\{a,b,c\}$ to 
a Diophantine triple $\{a',b',c'\}$ such that
$$
\partial(\{a,b,c\}) = \{a,b,c,d_{-}(a,b,c)\} - \{\max(a,b,c)\},
$$
where $\{a,b,c,d_{-}(a,b,c)\} - \{\max(a,b,c)\}$ denotes the set which we obtain by removing the maximal element from the set  $\{a,b,c,d_{-}(a,b,c)\}$.

For a nonnegative integer $D$, we can define the operator $\partial_{-D}$ on Diophantine triples recursively by
\begin{enumerate}
\item For any Diophantine triple $\{a,b,c\}$ we define
$$
\partial _{0}(\{a,b,c\}) =\{a,b,c\}. 
$$

\item Provided that $\partial_{-(D-1)}(\{a,b,c\})$ is not an Euler triple, we recursively define
$$
\partial_{-D}(\{a,b,c\}) =\partial\left(\partial_{-(D-1)}(\{a,b,c\})\right),\quad \mbox{for}\,\, D\ge 1. 
$$ 
\end{enumerate}

Moreover, we put
$$
d_{-D}(a,b,c)=d_{-}(\partial_{-D+1}(\{a,b,c\})).
$$
\end{definition} 

In particular, we have that $\partial=\partial_{-1}$ and
$$
\partial_{-2}(\{a,b,c\}) =\partial\left(\partial_{-1}(\{a,b,c\})\right).
$$
Furthermore, we note that due to Lemma \ref{lem:well-def} the $\partial$-operator is well defined, i.e. a Diophantine triple $\{a,b,c\}$ is mapped indeed to another Diophantine triple,
unless $\{a,b,c\}$ is not an Euler triple.

\begin{proposition}\label{pro:2-3}
For any fixed Diophantine triple $\{a,b,c\}$, there exists a unique nonnegative integer $D<\frac{\log (abc)}{ \log 12}$ such that $d_{-(D+1)}(a,b,c)=0$. 
\end{proposition}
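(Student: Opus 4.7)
The plan is to show that each application of the $\partial$-operator strictly decreases the product of the three elements of the triple by at least a factor of $12$, so that the chain of iterates must reach an Euler triple in fewer than $\log(abc)/\log 12$ steps. First I would derive the product identity
$$d_{+}(a,b,c) \cdot d_{-}(a,b,c) = (c-a-b)^2 - 4r^2, \qquad r^2 = ab+1,$$
by expanding $(a+b+c+2abc)^2 - 4(ab+1)(ac+1)(bc+1)$ and recognising the result as $(c-a-b)^2 - 4(ab+1)$.

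Next I would establish the central estimate $d_{-}(a,b,c) < c/(4ab)$, assuming $a<b<c$ and that $\{a,b,c\}$ is not an Euler triple. Lemma~\ref{lem:Jones} then forces $c > 4ab$, so in particular $0 < c-a-b < c$ and the numerator satisfies $(c-a-b)^2 - 4r^2 < c^2$, while Lemma~\ref{lem:d+ieq} supplies $d_{+} > 4abc$; dividing yields the claim. In particular $d_{-} < c$, so $c = \max\{a,b,c,d_{-}\}$, and together with Lemma~\ref{lem:well-def} (which gives $d_{-} \notin \{a,b,c\}$) this shows $\partial(\{a,b,c\}) = \{a, b, d_{-}\}$, a genuine Diophantine triple whose product equals $ab\,d_{-}$. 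Since any Diophantine pair $\{a,b\}$ with $a<b$ satisfies $ab \geq 1 \cdot 3 = 3$, our bound gives $d_{-} < c/12$, hence $ab\,d_{-} < abc/12$.

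Write $\{a_i, b_i, c_i\} := \partial_{-i}(\{a,b,c\})$ whenever the iterate is defined. Iterating the product decay yields $a_i b_i c_i < abc/12^i$ for each $i \geq 1$ in the chain. Since these are positive integers, the strict descent cannot continue indefinitely; hence there is a smallest $D \geq 0$ at which $\partial_{-D}(\{a,b,c\})$ is an Euler triple. By Proposition~\ref{pro:2-1} this is precisely the smallest $D$ satisfying $d_{-(D+1)}(a,b,c) = 0$. Uniqueness is automatic because $\partial$ is undefined on Euler triples, so the chain halts at step $D$ and no larger index satisfies the equation. For the quantitative bound, if $D \geq 1$ then $1 \leq a_D b_D c_D < abc/12^D$ gives $D < \log(abc)/\log 12$, while for $D = 0$ the inequality is immediate from $abc \geq 24$.

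The only mildly tedious part is the verification of the opening identity; after that the argument follows mechanically from the three earlier lemmas, so I do not anticipate a substantial obstacle. If anything, care is needed to separate the cases $D=0$ and $D\geq 1$ in the final step and to confirm that $\partial$ indeed lands in the class of Diophantine triples at every iteration, which is ensured by the identities ${a d_\pm +1=(rs\pm at)^2}$, ${b d_\pm +1=(rt\pm bs)^2}$ stated at the start of Section~\ref{sec:aux}.
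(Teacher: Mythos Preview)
Your proof is correct and follows the same overall strategy as the paper: show that each application of $\partial$ shrinks the product of the three elements by a factor exceeding $12$, then iterate. The one substantive difference lies in how you obtain the key inequality $d_{-}(a,b,c) < c/(4ab)$. The paper invokes Proposition~\ref{pro:2-2} to identify $\{a,b,d_{-},c\}$ as a regular quadruple, i.e.\ $c = d_{+}(a,b,d_{-})$, and then reads off $c > 4ab\,d_{-}$ directly from Lemma~\ref{lem:d+ieq}. You instead prove the product identity $d_{+}d_{-} = (c-a-b)^{2} - 4r^{2}$, bound the right-hand side crudely by $c^{2}$, and divide by $d_{+} > 4abc$ from Lemma~\ref{lem:d+ieq}. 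Both routes land on the same inequality; yours is self-contained and avoids the inverse-function machinery of Proposition~\ref{pro:2-2}, while the paper's is a one-line consequence once that proposition is in hand. Either way the remainder of the argument---the $ab \geq 3$ observation, the descent, and the bound $12^{D} < abc$---is identical.
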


\begin{proof}
If $\{a,b,c\}$ is an Euler triple, then the result is obtained by Proposition \ref{pro:2-1}. 

Now, let us assume that $\{a,b,c\}$ is not an Euler triple. Since by Proposition \ref{pro:2-2} we know that $\{a,b,d_{-1}(a,b,c),c\}$
is a regular Diophantine quadruple, we deduce that $c>4ab\cdot d_{-1}(a,b,c)$ by Lemma \ref{lem:d+ieq}. In particular, we have 
$ab\cdot d_{-1}(a,b,c)<\frac{c}{4}<\frac{abc}{12}$. Note that $ab\geq 3$. This implies that the product $a'b'c'$ of the elements of the corresponding
triple $\{a',b',c'\}:=\partial_{-k}(\{a,b,c\})$ is less than $\frac{abc}{12^{k}}$, provided that the previous $k-1$ images were
not Euler triples. Thus, there exists some suitable positive integer $D$ less than $\frac{\log(abc)}{\log 12}$ such that
$\{a'',b'',c''\}:=\partial_{-D}(\{a,b,c\})$ is an Euler triple and by Proposition~\ref{pro:2-1}, we have $d_{-(D+1)}(a,b,c)=0$.

The uniqueness of $D$ is a direct consequence of the fact that the product $a'b'c'$ with $\{a',b',c'\}:=\partial_{-k}(\{a,b,c\})$ is strictly decreasing with $k$ until we get an Euler triple.
\end{proof}

\begin{definition}
We say that a Diophantine triple $\{a,b,c\}$ is of {\textit{degree}} $D$ and is {\textit{generated}} by an Euler triple
$\{a',b',c'\}$, if $d_{-(D+1)}(a,b,c)=0$ and $\partial_{-D}(\{a,b,c\})=\{a',b',c'\}$. If the triple $\{a,b,c\}$ is of degree $D$ we simply write $\deg(a,b,c)=D$. 
\end{definition} 

\begin{remark}
 Note that in the definition the triple $\{a',b',c'\}$ is an Euler triple due to Proposition \ref{pro:2-1} since $d_-(a',b',c')=0$ by assumption.
\end{remark}

\begin{remark} 
Note that there are not too many Diophantine triples of small degree which are generated by a fixed Euler triple $\{a,b,c\}$.
Indeed, for an arbitrary but fixed Euler triple $\{a,b,c\}$, there are at most $3^{D}$ Diophantine triples generated by $\{a,b,c\}$ with $\deg(a,b,c)= D$. 
\end{remark}

\begin{proof}
The proof will be done by induction on $D$. If $D=0$, then $\{a,b,c\}$ is the only Diophantine triple of degree $0$ generated by $\{a,b,c\}$.
Assume that the result holds
for $D=k\ge 0$. Every triple $\{a',b',c'\}$ which is generated by $\{a,b,c\}$ and which is of degree $D$ yields three new triples 
$\{b',c',d_{+}(a',b',c')\}$, $\{a',c',d_{+}(a',b',c')\}$ and $\{a',b',d_{+}(a',b',c')\}$, which are all of degree $D+1$ and which are sent back 
to $\{a',b',c'\}$ by the $\partial$ operator, due to Proposition~\ref{pro:2-2}. Thus, we obtain at most $3\cdot 3^D=3^{D+1}$ new triples of degree $D+1$.
\end{proof}

Now, we know the structure of all Diophantine triples which are generated by some Euler triples. 
For each Euler triple, we get a ternary tree with root $\{a,b,a+b+2r\}$. For example, the triples
$\{1,3,120\}$, $\{1,8,120\}$, $\{3,8,120\}$ have degree $1$ and are generated by the Fermat triple $\{1,3,8\}$.

\section{System of Pell equations}\label{sec:4}

Let $\{a, b, c\}$ be a Diophantine triple with $a < b < c$, and $r$, $s$, $t$ positive integers such that
$$
ab + 1 = r^2, \quad ac + 1 = s^2,\quad bc + 1 = t^2. 
$$
Furthermore, suppose that $\{a, b, c, d, e\}$ is a Diophantine quintuple with $a < b < c < d < e$, and put
$$ad + 1 = x^2,\quad  bd + 1 = y^2,\quad cd + 1 = z^2,$$
with positive integers $x, y, z$. Then, there exist integers $X,Y,Z,W$ such that
$$
ae + 1 = X^2,\quad be + 1 = Y^2,\quad ce + 1 = Z^2,\quad de + 1 = W^2. 
$$
Note that if we fix $d=d_{+}$, which we may assume due to Fujita's result~\cite{Fujita-1} (cf. Theorem \ref{thm:fujita}), then we have
$$
x= at+rs,\quad y= bs +rt, \quad z=cr+st.
$$
By eliminating $e$ from the above equations, we obtain the following system of Pell equations:
\begin{align}\label{eq:ab}
 aY^2 - bX^2 &= a - b,\\\label{eq:ac}
 aZ^2 - cX^2 &= a - c,\\\label{eq:bc}
 bZ^2 - cY^2 &= b - c,\\\label{eq:ad}
 aW^2 - dX^2 &= a - d,\\\label{eq:bd}
 bW^2 - dY^2 &= b - d,\\\label{eq:cd}
 cW^2 - dZ^2 &= c - d. 
\end{align}
Let us state the following result concerning Pell equations of the form \eqref{eq:ab}--\eqref{eq:cd}.

\begin{lemma}\label{lem:boundsx_0y_0}
 Every integer solution to a Pell equation of the form
 $$ aY^2-bX^2=a-b,$$
 with $ab+1=r^2$ is obtained from
 $$Y \sqrt{a} + X \sqrt{b} = (y_0 \sqrt{a} + x_0 \sqrt{b})(r+\sqrt{ab})^{n},$$
 where $n$, $x_0$ and $y_0$ are integers such that $n\geq 0$,
 $$1\leq x_0 \leq \sqrt{\frac{a(b-a)}{2(r-1)}},\quad \text{and} \quad 1\leq |y_0| \leq \sqrt{\frac{(r-1)(b-a)}{2a}}.$$
\end{lemma}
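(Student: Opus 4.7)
The plan is to follow the classical Pell-equation fundamental-solution method. Let $\alpha:=r+\sqrt{ab}$; since $r^2-ab=1$, this is a unit of norm $+1$ in the order $\mathbb{Z}[\sqrt{ab}]$. Writing $U:=Y\sqrt{a}+X\sqrt{b}$ and $V:=Y\sqrt{a}-X\sqrt{b}$, the equation factors as $UV=a-b$, and the direct computation
$$(y\sqrt{a}+x\sqrt{b})(r+\sqrt{ab})=(ry+bx)\sqrt{a}+(rx+ay)\sqrt{b}$$
shows that $\alpha$-multiplication sends the integer solution $(x,y)$ to the integer solution $(rx+ay,\,bx+ry)$, while $\alpha^{-1}=r-\sqrt{ab}$ sends it to $(rx-ay,\,ry-bx)$. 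The integer solution set thus splits into $\alpha$-orbits, and it suffices to exhibit in each orbit a representative $(x_0,y_0)$ with $x_0\geq 1$ and $|y_0|\geq 1$ satisfying the stated upper bounds.

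To produce such a fundamental representative, I would observe that along the orbit $\{(x_n,y_n):n\in\mathbb{Z}\}$ the coordinate $|y_n|$ tends to infinity in both directions: indeed $U_n=U_0\alpha^n\to\infty$ as $n\to+\infty$ and $|V_n|=|V_0|\alpha^{-n}\to\infty$ as $n\to-\infty$, and $y_n=(U_n+V_n)/(2\sqrt{a})$. Hence $|y_n|$ attains a minimum; pick $(x_0,y_0)$ realising this minimum and, if necessary, replace $(x_0,y_0)$ by $(-x_0,-y_0)$ to arrange $x_0\geq 1$. The cases $x_0=0$ and $y_0=0$ are immediately excluded by the equation ($x_0=0$ gives $ay_0^2=a-b<0$, while $y_0=0$ gives $bx_0^2=b-a$, which has no positive integer solution when $0<a<b$), so $x_0\geq 1$ and $|y_0|\geq 1$.

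Next I would deduce the upper bounds from the minimality condition. Assume $y_0>0$ (the case $y_0<0$ is entirely symmetric, comparing with $(x_1,y_1)$ via $\alpha$ instead of $(x_{-1},y_{-1})$ via $\alpha^{-1}$), so that $(x_{-1},y_{-1})=(rx_0-ay_0,\,ry_0-bx_0)$ and $|ry_0-bx_0|\geq y_0$. If $ry_0\geq bx_0$, this forces $(r-1)y_0\geq bx_0$; substituting into $ay_0^2-bx_0^2=a-b$ and using $ab-(r-1)^2=2(r-1)$ yields $2bx_0^2/(r-1)\leq a-b<0$, a contradiction. Hence $ry_0<bx_0$, giving $y_0\leq bx_0/(r+1)$; inserting this into the Pell equation and using $(r+1)^2-ab=2(r+1)$ together with $ab=(r-1)(r+1)$ produces
$$x_0^2\leq\frac{(b-a)(r+1)}{2b}=\frac{a(b-a)}{2(r-1)},$$
and chaining with $y_0\leq bx_0/(r+1)$ yields $y_0^2\leq(r-1)(b-a)/(2a)$, as claimed.

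The main obstacle is purely organizational: keeping the sign of $y_0$ straight and choosing the correct direction ($\alpha$ versus $\alpha^{-1}$) for the minimality comparison. Once the two clean identities $ab-(r-1)^2=2(r-1)$ and $(r+1)^2-ab=2(r+1)$, both consequences of $ab=r^2-1=(r-1)(r+1)$, are spotted, the algebra collapses immediately to the stated bounds.
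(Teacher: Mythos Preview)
Your proposal is correct and is in fact \emph{more} than what the paper does: the paper's proof is a two-line citation to Nagell's Theorem~108a and to Dujella's 2001 paper, with no details given. What you have written is essentially the classical argument behind those references, so the approaches coincide.

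One organizational wrinkle worth tightening: the replacement $(x_0,y_0)\mapsto(-x_0,-y_0)$ does not stay in the same $\alpha$-orbit, so as written you are not quite producing a representative of the original orbit. The clean fix is to observe from the outset that the solution set is invariant under $(X,Y)\mapsto(-X,-Y)$, restrict to $X>0$, and then note that in each such orbit $U_0=y_0\sqrt{a}+x_0\sqrt{b}>0$ automatically (because $bx_0^2-ay_0^2=b-a>0$ forces $x_0\sqrt{b}>|y_0|\sqrt{a}$); the positive solutions then correspond exactly to the indices $n\geq 0$. This is precisely the ``purely organizational'' issue you already flagged, and once handled your bound computations via the identities $ab-(r-1)^2=2(r-1)$ and $(r+1)^2-ab=2(r+1)$ go through verbatim.
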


\begin{proof}
 The Lemma is a direct application of the general theory of Pell equations as described in \cite[Theorem 108a]{Nagell}
 after some small modifications. In particular, this specific case  has also been studied by Dujella~\cite[Lemma 1]{Dujella:2001}.
\end{proof}

We apply Lemma \ref{lem:boundsx_0y_0} to the system of Pell equations \eqref{eq:ab}--\eqref{eq:cd} and obtain
\begin{align}\label{eq:abi}
Y \sqrt{a} + X \sqrt{b} &=Y^{(a,b)}_{h'} \sqrt{a} + X^{(a,b)}_{h'} \sqrt{b} = (Y_0 \sqrt{a} + X_0 \sqrt{b})(r+\sqrt{ab})^{h'},\\\label{eq:acj}
Z \sqrt{a} + X \sqrt{c} &=Z^{(a,c)}_{j'} \sqrt{a} + X^{(a,c)}_{j'} \sqrt{c} = (Z_1 \sqrt{a} + X_1 \sqrt{c})(s+\sqrt{ac})^{j'},\\\label{eq:bck}
Z \sqrt{b} + Y \sqrt{c} &=Z^{(b,c)}_{k'} \sqrt{b} + Y^{(b,c)}_{k'} \sqrt{c} = (Z_2 \sqrt{b} + Y_2 \sqrt{c})(t+\sqrt{bc})^{k'},\\\label{eq:adl}
W \sqrt{a} + X \sqrt{d} &=W^{(a,d)}_{l'} \sqrt{a} + X^{(a,d)}_{l'} \sqrt{d} = (W_3 \sqrt{a} + X_3 \sqrt{d})(x+\sqrt{ad})^{l'},\\\label{eq:bdm}
W \sqrt{b} + Y \sqrt{d} &=W^{(b,d)}_{m'} \sqrt{b} + Y^{(b,d)}_{m'} \sqrt{d} = (W_4 \sqrt{b} + Y_4 \sqrt{d})(y+\sqrt{bd})^{m'},\\\label{eq:cdn}
W \sqrt{c} + Z \sqrt{d} &=W^{(c,d)}_{n'} \sqrt{c} + Z^{(c,d)}_{n'} \sqrt{d} = (W_5 \sqrt{c} + Z_5 \sqrt{d})(z+\sqrt{cd})^{n'},
\end{align}
for some nonnegative integers $h',j',k',l',m',n'$ and integers $Y_0$, $X_0$, $Z_1$, $X_1$, $Z_2$, $Y_2$, $W_3$, $X_3$, $W_4$, $Y_4$, $W_5$, and $Z_5$.

In view of the relations \eqref{eq:adl}, \eqref{eq:bdm}, and \eqref{eq:cdn} we have that 
 $W=W^{(a,d)}_{l'} = W^{(b,d)}_{m'}=W^{(c,d)}_{n'}$, where $W^{(a,d)}_{l'}$, $W^{(b,d)}_{m'}$, and $W^{(c,d)}_{n'}$ satisfy the following recursions:
\begin{align*}
W^{(a,d)}_0 &= W_3,& W^{(a,d)}_1 &= xW_3 + d X_3,& W^{(a,d)}_{l'+2} &= 2x W^{(a,d)}_{l'+1} -W^{(a,d)}_{l'}, \\
W^{(b,d)}_0 &= W_4,& W^{(b,d)}_1 &= yW_4 + d Y_4,& W^{(b,d)}_{m'+2} &= 2y W^{(b,d)}_{m'+1} - W^{(b,d)}_{m'},\\ 
W^{(c,d)}_0 &= W_5,& W^{(c,d)}_1 &= zW_5 + d Z_5,& W^{(c,d)}_{n'+2} &= 2z W^{(c,d)}_{n'+1} - W^{(c,d)}_{n'}.
\end{align*}

The next two lemmas will help us to better understand the structure of the solutions to the system of Pell equations
\eqref{eq:ab}--\eqref{eq:cd}. In particular, the next lemma due to Fujita~\cite{Fujita-number} takes care
of the subsystem of Pell equations \eqref{eq:ad}--\eqref{eq:cd}

\begin{lemma}[\cite{Fujita-number}, Lemma~2.2]\label{lem:1}
If $W=W^{(a,d)}_{l'} = W^{(b,d)}_{m'}=W^{(c,d)}_{n'}$, then we have
$$
l'\equiv m'\equiv n'\equiv 0 \pmod 2
$$
 and 
$$
W_3=W_4=W_5=\varepsilon =\pm 1.
$$
\end{lemma}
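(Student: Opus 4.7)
The plan is to reduce each of the three Pell recursions modulo $d$ and then combine the resulting congruences with the fundamental-solution bounds of Lemma \ref{lem:boundsx_0y_0}. Crucially, from $ad+1=x^2$, $bd+1=y^2$, $cd+1=z^2$ we have $\gcd(a,d)=\gcd(b,d)=\gcd(c,d)=1$, which underpins almost every step.

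First I would exploit $x^2=ad+1\equiv 1\pmod d$ and induct on $l'$ using $W^{(a,d)}_{l'+2}=2xW^{(a,d)}_{l'+1}-W^{(a,d)}_{l'}$ to obtain
\begin{equation*}
W^{(a,d)}_{l'}\equiv\begin{cases}W_3\pmod d & l'\text{ even},\\ xW_3\pmod d & l'\text{ odd},\end{cases}
\end{equation*}
and analogously for the $(b,d)$- and $(c,d)$-sequences with $W_4,y$ and $W_5,z$. Squaring kills the distinction between the two parity cases, giving $W^2\equiv W_3^2\equiv W_4^2\equiv W_5^2\pmod d$. Rewriting each Pell equation as $a(W_3^2-1)=d(X_3^2-1)$, $b(W_4^2-1)=d(Y_4^2-1)$, $c(W_5^2-1)=d(Z_5^2-1)$ and using coprimality, one deduces $W_3^2\equiv W_4^2\equiv W_5^2\equiv 1\pmod d$.

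Next I would upgrade these mod-$d$ congruences to the exact equalities $W_3^2=W_4^2=W_5^2=1$. The leverage is the companion bound $X_3\le\sqrt{a(d-a)/(2(x-1))}$ from Lemma \ref{lem:boundsx_0y_0}, which is of order $(ad)^{1/4}$ and thus much smaller than the direct bound on $|W_3|$; together with $X_3^2\equiv 1\pmod a$ (same derivation as above, exchanging the roles of $a$ and $d$), this pins down $X_3=1$, whence $a(W_3^2-1)=0$ and $W_3=\pm1$. In the residual range where the mod-$a$ bound alone is not decisive, the three-way identity $W=W^{(a,d)}_{l'}=W^{(b,d)}_{m'}=W^{(c,d)}_{n'}$ provides simultaneous constraints mod $a,b,c,d$ that leave no option other than $W_3,W_4,W_5\in\{\pm1\}$. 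Parity of the exponents then falls out: if $l'$ were odd, $W\equiv\pm x\pmod d$ would have to match either $W\equiv\pm 1$ (even $m'$) or $W\equiv\pm y$ (odd $m'$) from the $(b,d)$-sequence, i.e. $x\equiv\pm1\pmod d$ or $x\equiv\pm y\pmod d$. Since $1<x<y<d$ and in fact $x,y\ll d$ (because $d>4abc$ by Lemma \ref{lem:d+ieq}), both possibilities are ruled out. The cases of $m'$ and $n'$ are symmetric. Hence $l'\equiv m'\equiv n'\equiv 0\pmod 2$, so $W\equiv W_3\equiv W_4\equiv W_5\pmod d$, and with all three in $\{\pm 1\}$ and $d\ge 1520$ by Lemma \ref{lem:min_bcd} we conclude $W_3=W_4=W_5=:\varepsilon\in\{\pm1\}$.

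The main obstacle is the step that upgrades $W_3^2\equiv 1\pmod d$ to $W_3^2=1$. The direct fundamental-solution bound $|W_3|\le\sqrt{(x-1)(d-a)/(2a)}$ can exceed $\sqrt d$ substantially when $d\gg a$, which is precisely the regime $d>4abc$ we are in, so the mod-$d$ congruence alone is not enough. Squeezing out $W_3=\pm1$ requires using the small companion bound on $X_3$, the auxiliary congruence $X_3^2\equiv 1\pmod a$, and — when $a$ itself grows — the joint information from the other two sequences via analogous mod-$b$ and mod-$c$ reductions. Once this rigidity step is done, the parity argument and the identification of the common sign are comparatively routine.
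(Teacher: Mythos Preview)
The paper does not prove this lemma; it simply quotes Lemma~2.2 of Fujita~\cite{Fujita-number}. So there is no ``paper's own proof'' to compare against, and your proposal has to be judged on its own merits.

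Your overall architecture is reasonable and the later parts are essentially correct. Once you \emph{know} that $W_3,W_4,W_5\in\{\pm1\}$, your parity argument works: from $W\equiv W_3$ or $xW_3\pmod d$ and $W\equiv W_4$ or $yW_4\pmod d$, the four cross-cases give $\pm1\equiv\pm1$, $\pm1\equiv\pm y$, $\pm x\equiv\pm1$, $\pm x\equiv\pm y\pmod d$; since $1<x<y$ and $x+y<2\sqrt{bd+1}<d$ (because $d>4abc$), the last three are impossible, forcing $l',m'$ even and $W_3=W_4$. The same with $z$ handles $n'$ and $W_5$.

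The genuine gap is exactly where you flag it: upgrading $W_3^2\equiv1\pmod d$ to $W_3^2=1$. Your proposed route via $X_3$ does not close it. The bound $X_3\le\sqrt{a(d-a)/(2(x-1))}$ gives only $X_3^2<\sqrt{ad}$, while $X_3^2\equiv1\pmod a$ allows $X_3^2=1+ka$ for any $k\le\sqrt{d/a}$, and $\sqrt{d/a}>\sqrt{4bc}$ is large. Worse, when $a$ is composite the congruence $X_3^2\equiv1\pmod a$ can have small solutions other than $1$ (e.g.\ $X_3=3$ when $a=8$), so even the implication ``$X_3^2\le a$ forces $X_3=1$'' fails. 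Your appeal to ``simultaneous constraints mod $a,b,c,d$'' from the three-way intersection is not a proof: those constraints amount to $W_3^2\equiv W_4^2\equiv W_5^2\equiv1\pmod d$ together with $X_3^2\equiv1\pmod a$, $Y_4^2\equiv1\pmod b$, $Z_5^2\equiv1\pmod c$, and none of the accompanying size bounds is small enough to force any of these to be the trivial solution.

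Fujita's argument does not try to squeeze $|W_i|=1$ out of the generic Pell bounds. It uses that, by his earlier theorem (Theorem~\ref{thm:fujita} here), $d=d_+(a,b,c)$, so $x=at+rs$, $y=bs+rt$, $z=cr+st$ are explicit, and one can invoke Dujella's fundamental-solution lemmas for the quadruples $\{a,b,d,e\}$, $\{a,c,d,e\}$, $\{b,c,d,e\}$ under the gap condition $d>4abc$. If you want a self-contained proof, that is the extra structure you need to bring in; the raw congruence-plus-bound approach you outline is not sufficient.
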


Our next aim is to extend Fujita's result stated in Lemma \ref{lem:1}.

\begin{lemma}\label{lem:2}
We have $$h'\equiv j'\equiv k' \equiv 0 \pmod{2}$$and $$X_0=X_1=Y_0=Y_2=Z_1=Z_2=1.$$ 
\end{lemma}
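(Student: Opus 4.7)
The plan is to mirror the strategy of Fujita's proof of Lemma~\ref{lem:1}, now applied to the upper subsystem \eqref{eq:ab}--\eqref{eq:bc}, with Lemma~\ref{lem:1} itself as the essential input.

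First, Lemma~\ref{lem:1} yields $W_3=W_4=W_5=\varepsilon=\pm 1$ and $l',m',n'$ even. Substituting $W_3=\pm 1$ into the Pell equation \eqref{eq:ad} forces $X_3^2=1$, so $X_3=1$; analogously $Y_4=Z_5=1$. Reducing the recursion $X^{(a,d)}_{l'+1}=xX^{(a,d)}_{l'}+aW^{(a,d)}_{l'}$ modulo $a$ and using $x^2\equiv 1\pmod{a}$ with $l'$ even yields $X\equiv 1\pmod{a}$. The analogous reductions give $Y\equiv 1\pmod{b}$ and $Z\equiv 1\pmod{c}$.

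Second, I expand $X^{(a,b)}_{h'}$ modulo $a$. The associated recursion matrix becomes lower triangular modulo $a$, so $X^{(a,b)}_{h'}\equiv r^{h'}X_0\pmod{a}$. Combining with $r^2\equiv 1\pmod{a}$ and the congruence $X\equiv 1\pmod{a}$ from the previous paragraph, I obtain $X_0\equiv 1\pmod{a}$ when $h'$ is even and $rX_0\equiv 1\pmod{a}$ when $h'$ is odd. The same computation for the other five fundamental pairs produces analogous congruences for $X_1,Y_0,Y_2,Z_1,Z_2$ modulo $a,b,b,c,c$ respectively, each time conditioned on the parity of the corresponding exponent $j'$ or $k'$.

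Third, I combine these congruences with the bounds of Lemma~\ref{lem:boundsx_0y_0}. The bound $X_0\leq\sqrt{a(b-a)/(2(r-1))}$, together with $b>3a$ from Lemma~\ref{lem:b3a} and the lower bounds on $b,c,d$ of Lemma~\ref{lem:min_bcd}, forces $X_0\equiv 1\pmod{a}$ to collapse to $X_0=1$ in the even-$h'$ case; substituting into $aY_0^2-bX_0^2=a-b$ then gives $Y_0=\pm 1$. The sign $Y_0=-1$ is ruled out because the orbit of $(\sqrt{a}-\sqrt{b})(r+\sqrt{ab})^{h'}$ produces $e$-values of the form $0,\,a+b-2r,\,\ldots$, all either below $c<d$ or recoverable from the $(\sqrt{a}+\sqrt{b})$-orbit at a shifted index, contradicting $e>d$. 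Identical reasoning applied to \eqref{eq:ac} and \eqref{eq:bc} handles the pairs $(X_1,Z_1)$ and $(Z_2,Y_2)$ and pins down $j',k'$ as even.

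The main obstacle will be the odd-exponent branch $rX_0\equiv 1\pmod{a}$, i.e.\ $X_0\equiv r\pmod{a}$, and its analogues with $s,t$. Here the size bound alone no longer forces $X_0=1$, and I expect to need the full interplay of the three upper Pell equations: the same $X$ must simultaneously satisfy the congruences coming from \eqref{eq:ab}, \eqref{eq:ac} and \eqref{eq:ad}, and jointly imposing them (together with the gap principle of Lemma~\ref{lem:Jones} and $b>3a$) should force a contradiction. Closing this case cleanly is the step I expect to require the most care.
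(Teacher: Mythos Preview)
Your overall strategy matches the paper's: use Lemma~\ref{lem:1} to pin down $X_3=Y_4=Z_5=1$, extract congruences for $X,Y,Z$ from the $(\cdot,d)$ equations, compare them with congruences coming from the $(a,b),(a,c),(b,c)$ recursions, and invoke the size bounds of Lemma~\ref{lem:boundsx_0y_0}. The genuine gap is in \emph{which} modulus you reduce by. You argue via $X_0\equiv 1\pmod a$ and claim the bound $X_0\le\sqrt{a(b-a)/(2(r-1))}$ forces $X_0=1$. This fails: for $a=1$ the congruence is vacuous, and in general the bound on $X_0$ is roughly $(ab)^{1/4}$, which easily exceeds $a$. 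The same defect leaves your odd-$h'$ branch genuinely open, exactly as you anticipated.

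The paper fixes this by always reducing modulo the \emph{larger} element of each Pell pair. For \eqref{eq:ab} it compares $Y^{(a,b)}_{h'}$ with $Y^{(b,d)}_{m'}$ modulo $b$: the recursion gives $Y^{(a,b)}_{h'}\equiv Y_0$ or $Y_0r\pmod b$ according to parity, while Lemma~\ref{lem:1} gives $Y^{(b,d)}_{m'}\equiv 1\pmod b$. The bound $|Y_0|<0.71\,b^{3/4}$ is small relative to $b$, so the even case forces $Y_0=1$ immediately, and the odd case forces $Y_0\in\{r,\,r-b\}$; both are then eliminated by direct substitution (the first gives $X_0^2=a^2+1$, the second contradicts $b\geq 15$ via $b>3a$). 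The same argument with $Z$ modulo $c$ handles \eqref{eq:ac} and \eqref{eq:bc}. In short, switch from $X_0\pmod a$ to $Y_0\pmod b$ (and $Z_1,Z_2\pmod c$); then both parity branches close without the extra machinery you were bracing for.
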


\begin{proof}
First we consider the subsystem of Pell equations \eqref{eq:ab} and \eqref{eq:bd}, i.e. the system
\begin{equation}\label{eqs:abd}
\begin{split}
aY^2 - bX^2 &= a - b, \\
bW^2 - dY^2 &= b - d.
\end{split}
\end{equation}
In particular we are interested in the first equation of system \eqref{eqs:abd}.
By Lemma \ref{lem:boundsx_0y_0}, the integer solutions to the first equation of \eqref{eqs:abd} are obtained by   
$$
Y \sqrt{a} + X \sqrt{b} = (Y_0 \sqrt{a}+ X_0 \sqrt{b})(r+\sqrt{ab})^{h'},\quad h'\ge 0,
$$
where  $|Y_0| \le \sqrt{\frac{(r-1)(b-a)}{2a}}<\sqrt{\frac{b\sqrt{b}}{2\sqrt{a}}}<0.71b^{3/4}$. 
Further, by \eqref{eq:abi} we may write $Y = Y^{(a,b)}_{h'}$, where $Y^{(a,b)}_{h'}$ satisfies the following recursion
$$
Y^{(a,b)}_{0} = Y_0, \quad Y^{(a,b)}_{1} = r Y_0  + b X_0, \quad Y^{(a,b)}_{h'+2} = 2r  Y^{(a,b)}_{h'+1} -  Y^{(a,b)}_{h'}. 
$$
Thus, we get 
\begin{equation}\label{eq:betaab}
 Y^{(a,b)}_{h'}\equiv  \begin{cases}
Y_0\phantom{r}\pmod{b},& \quad \mbox{if}\,\, h'\,\, \mbox{even}, \\
Y_0r\pmod{b},& \quad \mbox{if}\,\, h'\,\, \mbox{odd}.
                                      \end{cases}
\end{equation}

Next we consider the second equation of system \eqref{eqs:abd}.
By Lemma~\ref{lem:1}, we know that $W_4=\varepsilon =\pm1$. This implies together with Pell equation \eqref{eq:bd} that $Y_4=\pm 1$.
Note that due to Lemma \ref{lem:boundsx_0y_0}, we may discard the case that $Y_4=-1$ and therefore we may assume that $Y_4=1$.
Using \eqref{eq:bdm}, we have $Y=Y^{(b,d)}_{m'}$ and we obtain the following recursion:
$$
Y^{(b,d)}_0 = 1, \quad Y^{(b,d)}_1 = y +\varepsilon b, \quad Y^{(b,d)}_{m'+2} = 2y Y^{(b,d)}_{m'+1} - Y^{(b,d)}_{m'}. 
$$
From this recursion we deduce that
\begin{equation}\label{eq:betabd}
Y^{(b,d)}_{m'}\equiv 1 \pmod{b},\quad \mbox{for even}\,\,m'. 
\end{equation}
Note that due to Lemma \ref{lem:1} we may assume that $m'$ is even.
If $Y^{(a,b)}_{h'}=Y^{(b,d)}_{m'}$ with $h'$ odd, then from the congruences \eqref{eq:betaab} and \eqref{eq:betabd} 
we have that $ Y_0 r\equiv 1 \pmod{b}$. 
Multiplying both sides by $r$, we obtain $Y_0 \equiv r \pmod{b} $. The bound $|Y_0|<0.71b^{3/4}$ implies $Y_0 = r$ or $Y_0=r-b$.
Let us consider the case $Y_0=r$ first.
As $(Y_0, X_0)$ is an integer solution to the first equation in \eqref{eqs:abd} we obtain
$$
X_0^2 =\frac{aY_0^2 -a+b}b = \frac{a(ab+1)-a+b}b = a^2+1. 
$$
But $X_0^2=a^2+1$ implies that $X_0=1$ and $a=0$, which is a contradiction.

Now, assume that $Y_0=r-b$. Since $b>3a$ due to Lemma \ref{lem:b3a} we have $a\leq b/3 -1$, hence
\begin{equation}\label{eq:Y_0sol}
0.71b^{3/4}>b-r\geq b-\sqrt{(b/3 -1)b+1}\geq b\left(1-\frac1{\sqrt 3}\right)>0.422 b.
\end{equation}
Therefore, we obtain $b\leq 8.1<15$, which is a contradiction to Lemma \ref{lem:min_bcd}.

Hence, if $Y=Y^{(a,b)}_{h'}=Y^{(b,d)}_{m'}$, then $h'$ is even. Once again, from the congruences~\eqref{eq:betaab} and~\eqref{eq:betabd},
we obtain that $ Y_0 \equiv 1 \pmod{b}$. By $|Y_0|<0.71b^{3/4}$, we have $Y_0=1$. Thus, we get $X_0=\pm 1$.
By Lemma \ref{lem:boundsx_0y_0}, we may assume that $X_0$ is positive and we obtain $X_0=Y_0=1$. 

Similarly, by replacing $Y$ by $Z$ and $b$ by $c$ we obtain from the system of Pell equations \eqref{eqs:abd} the system 
\begin{equation*}
 \begin{split}
aZ^2 - cX^2 &= a - c, \\
cW^2 - dZ^2 &= c - d.
\end{split}
\end{equation*}
Now, applying the same arguments as above, we deduce that $j'$ is even and $X_1=Z_1=1$. In particular, in this case we have to exclude the solution $Z_1=s-c$.
However in this case, instead of inequality \eqref{eq:Y_0sol} we obtain the inequality
$$0.71c^{3/4}>|Z_1|=c-s\geq c-\sqrt{(c/3 -1)c+1}\geq c\left(1-\frac1{\sqrt 3}\right)>0.422 c$$
and deduce that $j'$ is even.

The same method also works if we replace $Y$ by $Z$, $X$ by $Y$, $a$ by $b$ and $b$ by $c$ in the system of Pell equations \eqref{eqs:abd}, i.e if we consider the system
\begin{equation*}
\begin{split}
bZ^2 - cY^2 &= b - c, \\
cW^2 - dZ^2 &= c - d.
\end{split}
\end{equation*}
In this case, we achieve that $k'$ is even and $Y_2=Z_2=1$. Thus, the only non straightforward step is to exclude the fundamental solution $Z_2=t-c$. If $\{a,b,c\}$ is not an Euler triple, then we have $c\geq 4ab$ and in particular $b\leq c/4$. Therefore,  instead of inequality \eqref{eq:Y_0sol} we obtain the inequality
$$0.71c^{3/4}>|Z_2|=c-t\geq c-\sqrt{\frac{c^2}4+1}=c\left(1-\sqrt{\frac14+\frac{1}{c^2}}\right)>0.498 c,$$
which yields a contradiction to the fact that $c\geq 24$. Also note that the last inequality is due to $c\geq 24$. Therefore, let us assume that $c=a+b+2r$. Then, we obtain
\begin{multline*}
s=c-t=|Z_2|\leq \sqrt{\frac{(t-1)(c-b)}{2b}} \\
<\sqrt{\frac{\sqrt{bc}(a+b+2r-b)}{2b}}<\sqrt{\sqrt{c/b}(a/2+r)}<\sqrt{s\cdot s}=s,
\end{multline*}
which is a contradiction. Hence, in any case we obtain that $k'$ is even.
\end{proof}

Furthermore, due to Lemma \ref{lem:boundsx_0y_0} we deduce that $X_3=Y_4=Z_5=1$ since $W_3=W_4=W_5=\varepsilon=\pm 1$.  So from now and on, we may write
$$
h'=2h,\;\; j'=2j,\;\; k'=2k,\;\; l'=2l,\;\; m'=2m, \;\; n'=2n,
$$
where $h,j,k,l,m,n$ are positive integers. Note that we may assume that the exponents are positive because a vanishing exponent would yield that either $X$, $Y$ or $Z$ is one, thus one of $a,b,c,d,e$ is zero.
Finally, we may rewrite the formulas \eqref{eq:abi}-\eqref{eq:cdn} to
\begin{align}\label{eq:abii}
Y \sqrt{a} + X \sqrt{b} &= (\sqrt{a} + \sqrt{b})(r+\sqrt{ab})^{2h},\\\label{eq:acjj}
Z \sqrt{a} + X \sqrt{c} &= (\sqrt{a} + \sqrt{c})(s+\sqrt{ac})^{2j},\\\label{eq:bckk}
Z \sqrt{b} + Y \sqrt{c} &= (\sqrt{b} + \sqrt{c})(t+\sqrt{bc})^{2k},\\\label{eq:adll}
W \sqrt{a} + X \sqrt{d} &= (\varepsilon \sqrt{a} + \sqrt{d})(x+\sqrt{ad})^{2l},\\\label{eq:bdmm}
W \sqrt{b} + Y \sqrt{d} &= (\varepsilon \sqrt{b} + \sqrt{d})(y+\sqrt{bd})^{2m},\\\label{eq:cdnn}
W \sqrt{c} + Z \sqrt{d} &= (\varepsilon \sqrt{c} + \sqrt{d})(z+\sqrt{cd})^{2n}.
\end{align}

\section{The gap principle and the classical congruence}\label{sec:5}

If there exists a positive integer $e$ such that the Diophantine quadruple $\{a,b,c,d\}$ can be extended
to a quintuple $\{a,b,c,d,e\}$ with $a<b<c<d<e$, then relations \eqref{eq:abii}--\eqref{eq:cdnn}
are fulfilled with positive integers $(i,j,k,l,m,n)$.  Moreover, there are $12$ sequences associated with $X,Y,Z$, and $W$, 
with indices $i,j,k,l,m$, and $n$ as defined in \eqref{eq:abi}--\eqref{eq:cdn}. For example, we have 
$$Y^{(a,b)}_{h} \sqrt{a} + X^{(a,b)}_h \sqrt{b}: = (\sqrt{a} + \sqrt{b})(r+\sqrt{ab})^{h},$$ 
and so on. Thus, each of the variables $X,Y,Z$, and $W$ corresponds to three sequences as follows:
\begin{align*}
X&=X^{(a,b)}_{2h} = X^{(a,c)}_{2j} = X^{(a,d)}_{2l},& Y&=Y^{(a,b)}_{2h} = Y^{(b,c)}_{2k} = Y^{(b,d)}_{2m}, \\
Z&=Z^{(a,c)}_{2j} = Z^{(b,c)}_{2k} = Z^{(c,d)}_{2n},& W&=W^{(a,d)}_{2l} = W^{(b,d)}_{2m} = W^{(c,d)}_{2n}.
\end{align*}
Several authors proved various relations between the indices of these sequences. Let us recall two results
due to Dujella \cite{Dujella:2004} and Fujita \cite{Fujita-number} respectively.

\begin{lemma}[Lemma 3 of \cite{Dujella:2004}]\label{lem:jk}
If $Z=Z^{(a,c)}_{2j} = Z^{(b,c)}_{2k}$, then $k-1\le j \le 2k+1$. 
\end{lemma}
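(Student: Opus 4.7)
The plan is to use the explicit closed-form expression for $Z$ coming from each of the two Pell identities \eqref{eq:acjj} and \eqref{eq:bckk} and compare them via logarithms.

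First I would conjugate $\sqrt{c}\mapsto-\sqrt{c}$ in \eqref{eq:acjj} and add to obtain
$$
2\sqrt{a}\,Z=(\sqrt{a}+\sqrt{c})(s+\sqrt{ac})^{2j}+(\sqrt{a}-\sqrt{c})(s-\sqrt{ac})^{2j}.
$$
Since $(s+\sqrt{ac})(s-\sqrt{ac})=1$ and $j\ge 1$, the second summand is bounded in absolute value by $\sqrt{c}-\sqrt{a}<\sqrt{c}$, so
$$
\left|2\sqrt{a}\,Z-(\sqrt{a}+\sqrt{c})(s+\sqrt{ac})^{2j}\right|<\sqrt{c}.
$$
An identical argument applied to \eqref{eq:bckk} yields
$$
\left|2\sqrt{b}\,Z-(\sqrt{b}+\sqrt{c})(t+\sqrt{bc})^{2k}\right|<\sqrt{c}.
$$

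Next I would eliminate $Z$ between these two estimates, obtaining after a short manipulation
$$
(s+\sqrt{ac})^{2j}=\mu\,(t+\sqrt{bc})^{2k}(1+\delta),
$$
where
$$
\mu=\frac{\sqrt{a}(\sqrt{b}+\sqrt{c})}{\sqrt{b}(\sqrt{a}+\sqrt{c})}=\frac{1+\sqrt{c/b}}{1+\sqrt{c/a}},
$$
and $\delta$ is an error term whose absolute value is easily seen to be less than $1$ (indeed much smaller, decaying geometrically in $j$ and $k$). Monotonicity of $x\mapsto(1+\sqrt{c/x})/(1+\sqrt{c/a})$ combined with $a<b$ shows that $\frac{1}{2}\sqrt{a/b}<\mu<1$, so that $-\log(2\sqrt{b/a})<\log\mu<0$.

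Taking logarithms of the displayed identity and dividing by $2\log(s+\sqrt{ac})$ yields
$$
j-k\rho=\frac{\log\mu+\log(1+\delta)}{2\log(s+\sqrt{ac})},\qquad \text{where}\quad \rho:=\frac{\log(t+\sqrt{bc})}{\log(s+\sqrt{ac})}.
$$
I would then establish the two-sided bound $1\le\rho\le 2$. The lower bound is immediate from $b>a$, which gives $t+\sqrt{bc}>s+\sqrt{ac}$. For the upper bound one checks that $(s+\sqrt{ac})^2=2ac+1+2s\sqrt{ac}>4ac\ge 4c>2\sqrt{bc+1}>t+\sqrt{bc}$, using $a\ge 1$ and $b<c$.

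Finally, combining $1\le\rho\le 2$ with the explicit bounds on $\log\mu$ and the smallness of $\log(1+\delta)$, and verifying that $\log(2\sqrt{b/a})<2\log(s+\sqrt{ac})$ (which follows at once from $2\sqrt{b/a}\le 2\sqrt{b}<2ac<(s+\sqrt{ac})^2$), one obtains
$$
-1<j-k\rho<1,
$$
which, since $\rho\in[1,2]$ and $j,k$ are integers, forces $k-1\le j\le 2k+1$.

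The only genuinely delicate step is verifying that the combined error $|\log\mu+\log(1+\delta)|$ is strictly less than $2\log(s+\sqrt{ac})$ so that the rounding to the nearest integer produces exactly the $\pm 1$ slack in the statement; this needs the size estimates from Lemma~\ref{lem:min_bcd} (namely $c\ge 24$) to rule out the smallest degenerate cases, but is otherwise routine.
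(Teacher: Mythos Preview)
Your approach is correct and is essentially the same method used in Dujella's original proof of this lemma (the paper itself does not reprove it, merely citing \cite{Dujella:2004}): one extracts the closed-form expression for $Z$ from each Pell identity, compares the two via logarithms, and bounds the ratio $\rho=\log(t+\sqrt{bc})/\log(s+\sqrt{ac})$ between $1$ and $2$ using the elementary size relations $a<b<c$. A couple of minor remarks: your lower bound for $\mu$ can in fact be sharpened to $\mu>\sqrt{a/b}$ (not just $\tfrac12\sqrt{a/b}$), since $\frac{\sqrt b+\sqrt c}{\sqrt a+\sqrt c}>1$; and the appeal to Lemma~\ref{lem:min_bcd} is unnecessary, as the estimates already go through for every Diophantine triple $a<b<c$ without any numerical lower bound on $c$.
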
 

\begin{lemma}[Lemma 2.3 of \cite{Fujita-number}]\label{lem:lmn}
If $W=W^{(a,d)}_{2l} = W^{(b,d)}_{2m} = W^{(c,d)}_{2n}$, then $4\le n\le m\le l \le  2n$.
\end{lemma}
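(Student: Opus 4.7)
My approach is to convert each of the representations \eqref{eq:adll}, \eqref{eq:bdmm}, \eqref{eq:cdnn} into an explicit Binet-type formula for $W$ and then compare growth rates. Applying the Galois conjugation $\sqrt{a}\mapsto-\sqrt{a}$ to \eqref{eq:adll} and using that $(x-\sqrt{ad})^{2l}=\theta_a^{-2l}$ for $\theta_a:=x+\sqrt{ad}$ (since $x^2-ad=1$), subtraction of the conjugate equation from the original yields
\begin{equation*}
W \;=\; \frac{(\varepsilon\sqrt{a}+\sqrt{d})\,\theta_a^{2l}-(-\varepsilon\sqrt{a}+\sqrt{d})\,\theta_a^{-2l}}{2\sqrt{a}},
\end{equation*}
and analogous identities with $\theta_b:=y+\sqrt{bd}$ of exponent $2m$ and $\theta_c:=z+\sqrt{cd}$ of exponent $2n$. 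Since the second summands are exponentially small once the exponent is at least one, each formula gives the sharp asymptotic
\begin{equation*}
W \;\approx\; \frac{\sqrt{d}+\varepsilon\sqrt{a}}{2\sqrt{a}}\,\theta_a^{2l} \;\approx\; \frac{\sqrt{d}+\varepsilon\sqrt{b}}{2\sqrt{b}}\,\theta_b^{2m} \;\approx\; \frac{\sqrt{d}+\varepsilon\sqrt{c}}{2\sqrt{c}}\,\theta_c^{2n}.
\end{equation*}

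\emph{Monotonicity $n\le m\le l$.} Because $a<b<c$ we have $x<y<z$ and hence $\theta_a<\theta_b<\theta_c$. Equating the three asymptotic expressions and taking logarithms gives $2l\log\theta_a\approx 2m\log\theta_b\approx 2n\log\theta_c$ up to bounded additive corrections from the prefactors $\frac{\sqrt{d}+\varepsilon\sqrt{\ast}}{2\sqrt{\ast}}$, which are uniformly controlled using $d\gg c>b>a$ from Lemma~\ref{lem:min_bcd}. The monotonicity of $\theta$ then forces $n\le m\le l$.

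\emph{Upper bound $l\le 2n$.} The key numerical inequality is $\theta_c\le \theta_a^{2}$: indeed $\theta_a^{2}\ge 4ad+1$ while $\theta_c<2\sqrt{cd}+1$, so the claim reduces to $4ad\ge 2\sqrt{cd}$, i.e.\ $c\le 4a^{2}d$, which is trivial from $c<d$ and $a\ge 1$. Raising to the $n$-th power yields $\theta_c^{2n}\le \theta_a^{4n}$, and matching against the asymptotic $W\approx \tfrac{\sqrt{d}+\varepsilon\sqrt{a}}{2\sqrt{a}}\theta_a^{2l}$ forces $l\le 2n$.

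\emph{Lower bound $n\ge 4$ and main obstacle.} This is the delicate step. I would compute the first few values of $W^{(c,d)}_{2n}$ from the recursion $W^{(c,d)}_{2n+2}=(4cd+2)W^{(c,d)}_{2n}-W^{(c,d)}_{2n-2}$ with $W^{(c,d)}_0=\varepsilon$, $W^{(c,d)}_2=\varepsilon(2cd+1)+2zd$, and similarly for the $(a,d)$- and $(b,d)$-sequences, and then for each $n\in\{1,2,3\}$ examine when the matching $W^{(c,d)}_{2n}=W^{(b,d)}_{2m}=W^{(a,d)}_{2l}$ can hold. Invoking the structural bounds $a\ge 1$, $b\ge 15$, $c\ge 24$, $d\ge 1520$ from Lemma~\ref{lem:min_bcd} should eliminate all such small cases. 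This is the main obstacle: each small $n$ gives an overdetermined polynomial identity in $a,b,c,d$, and a uniform elimination requires carefully combining the numerical lower bounds with the recurrence structure.
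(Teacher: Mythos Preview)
The paper does not supply its own proof of this lemma; it is quoted as Lemma~2.3 of \cite{Fujita-number}, so there is no in-paper argument to compare your proposal against directly.

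Your approach to the inequalities $n\le m\le l\le 2n$ via Binet formulas and growth-rate comparison is the standard one, and is exactly the technique the present paper deploys in its own Lemma~\ref{lem:2l3m} for the sharper bounds $2l\le 3m$ and $m<l$. Your verification of $\theta_c<\theta_a^{2}$ is correct. However, the phrases ``bounded additive corrections\ldots forces $n\le m\le l$'' and ``matching against the asymptotic\ldots forces $l\le 2n$'' are not yet proofs: one must bound the prefactor ratio $\tfrac{\sqrt{a}(\sqrt{d}+\varepsilon\sqrt{c})}{\sqrt{c}(\sqrt{d}+\varepsilon\sqrt{a})}$ explicitly and verify it is dominated by a single power of the relevant $\theta$, so that the exponent cannot slip by one. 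See the proof of Lemma~\ref{lem:2l3m} for how such prefactor estimates are carried out with explicit constants.

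The genuine gap is the lower bound $n\ge 4$. What you have written is a plan, not a proof: ``I would compute the first few values\ldots'' and ``should eliminate all such small cases'' are promissory. The elimination of $n\in\{1,2,3\}$ is not a purely numerical check against the bounds of Lemma~\ref{lem:min_bcd}; it uses that the common value $W$ determines a fifth element $e=(W^{2}-1)/d>d$ of a genuine Diophantine quintuple, together with the regularity $d=d_{+}(a,b,c)$ from Theorem~\ref{thm:fujita}, so that the small-$n$ candidates for $e$ are either too small or coincide with extensions already excluded. Your sketch does not engage this structure, and without it the small cases do not obviously close.
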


Lemmas \ref{lem:jk} and \ref{lem:lmn} reveal relations between the indices $k,j,m$, and $l$. In order to
get a better understanding of the relations between $m,n$, and $h$ we prove the following two lemmas.

\begin{lemma}\label{lem:2l3m}
We have $2l\le 3 m$ and $m<l$ unless $m=0$.
\end{lemma}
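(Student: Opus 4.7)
The plan is to extract sharp exponential-size estimates for $W$ from both \eqref{eq:adll} and \eqref{eq:bdmm} and compare. Setting $\alpha := x+\sqrt{ad}$ and $\beta := y+\sqrt{bd}$, the identities $\alpha(x-\sqrt{ad}) = 1 = \beta(y-\sqrt{bd})$ together with the algebraic conjugates of those two Pell relations yield
\begin{equation*}
 2W\sqrt{a} = (\varepsilon\sqrt{a}+\sqrt{d})\alpha^{2l} - (-\varepsilon\sqrt{a}+\sqrt{d})\alpha^{-2l},
\end{equation*}
and analogously for $\beta,m$. Since $\alpha,\beta \ge 2\sqrt{d}$ and $l,m\ge 4$ by Lemma~\ref{lem:lmn}, the conjugate tails $\alpha^{-2l}$, $\beta^{-2m}$ are minuscule, so eliminating $W$ gives
\begin{equation*}
 \frac{\alpha^{2l}}{\beta^{2m}} \;=\; \frac{\varepsilon\sqrt{ab}+\sqrt{ad}}{\varepsilon\sqrt{ab}+\sqrt{bd}}\,(1+E),\qquad |E|\ll 1,
\end{equation*}
where the leading constant $C$ is strictly less than $1$ because $\sqrt{ad}<\sqrt{bd}$. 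Using the gap $b>3a$ (Lemma~\ref{lem:b3a}) together with $b\ge 15$, $d\ge 1520$ (Lemma~\ref{lem:min_bcd}), $1-C$ is bounded below by a definite positive constant, so $\alpha^{2l}/\beta^{2m} < 1$ with ample margin.

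For $m < l$ when $m\ge 1$: assume $l\le m$. Because $\beta>1$, we have $\alpha^{2l}/\beta^{2m} \le (\alpha/\beta)^{2l}$. Expanding $\alpha^2 = 2ad+1+2x\sqrt{ad}$ and $\beta^2 = 2bd+1+2y\sqrt{bd}$ using $x^2=ad+1$, $y^2=bd+1$ yields $(\alpha/\beta)^2 \le a/b + O(1/(bd))$, hence $(\alpha/\beta)^{2l}\le (a/b)(1+o(1))$. Since $a<b$ forces $a/b < \sqrt{a/b} < C$, this upper bound contradicts the near-identity $\alpha^{2l}/\beta^{2m}\approx C$ established above.

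For $2l\le 3m$: assume for contradiction that $2l\ge 3m+1$. Then
\begin{equation*}
 \frac{\alpha^{2l}}{\beta^{2m}} \;\ge\; \alpha\cdot\left(\frac{\alpha^{3}}{\beta^{2}}\right)^{m}.
\end{equation*}
The crucial step is to check $\alpha^3\ge \beta^2$. From $\alpha\ge 2\sqrt{ad}$ and $\beta\le 2\sqrt{bd}+1/(2\sqrt{bd})$ the inequality reduces to $2a^{3/2}\sqrt{d}\ge b + O(1)$. By Theorem~\ref{thm:fujita} we have $d=d_+$, and Lemma~\ref{lem:d+ieq} together with $c>b$ gives $d>4abc>4ab^2$, so $2a^{3/2}\sqrt{d} > 4a^2 b \ge 4b$, confirming $\alpha^3\ge \beta^2$ with substantial slack. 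Consequently $(\alpha^3/\beta^2)^m\ge 1$ and $\alpha^{2l}/\beta^{2m}\ge \alpha \ge 2\sqrt{d}>77$, contradicting the upper bound $\alpha^{2l}/\beta^{2m}<1$ from the first step.

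The main obstacle is the quantitative bookkeeping needed to keep the bounds $C(1+|E|)<1$ and $\alpha^3/\beta^2\ge 1$ simultaneously valid at the worst parameter values. The bounds $l,m\ge 4$, $b\ge 15$, $d\ge 1520$, and $b>3a$ from Lemmas~\ref{lem:lmn}, \ref{lem:min_bcd}, and \ref{lem:b3a} give enough slack: they force the conjugate tails $\alpha^{-2l},\beta^{-2m}$ to be of order $10^{-16}$ or smaller, while $1-C$ remains bounded away from zero by the $b/a$ gap, so the two inequalities separate cleanly.
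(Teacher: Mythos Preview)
Your approach is essentially the same as the paper's: write the explicit formula for $W$ from both Pell relations, extract the ratio $\alpha^{2l}/\beta^{2m}$, and compare its value (close to a constant $C<1$) against the lower bound $\alpha\cdot(\alpha^3/\beta^2)^m$ for the $2l\le 3m$ direction and against the upper bound $(\alpha/\beta)^{2l}$ for the $m<l$ direction. The key step $\alpha^3\ge\beta^2$, reduced to $2a^{3/2}\sqrt d\ge b+O(1)$ via $d>4ab^2$, is exactly what the paper does (phrased there as $(x+\sqrt{ad})^3<(y+\sqrt{bd})^2$ leading to the contradiction $3.2a^3d<b^2$).

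There is, however, one genuine slip in your $m<l$ argument. You assert the chain $a/b<\sqrt{a/b}<C$ to conclude that your upper bound $(a/b)(1+o(1))$ falls below $C$. Writing
\[
C=\frac{\varepsilon\sqrt{ab}+\sqrt{ad}}{\varepsilon\sqrt{ab}+\sqrt{bd}}
=\sqrt{\frac ab}\cdot\frac{\sqrt d+\varepsilon\sqrt b}{\sqrt d+\varepsilon\sqrt a},
\]
one sees that for $\varepsilon=-1$ the second factor is \emph{strictly less than~$1$}, so $C<\sqrt{a/b}$ and your chain breaks. The needed inequality $a/b<C$ \emph{does} hold in both cases (it is equivalent to $\sqrt d>\sqrt a+\sqrt b$, which follows from $d>4ab^2$), but your stated justification does not give it. The cleanest repair is to exploit $l\ge 4$ from Lemma~\ref{lem:lmn}: then the upper bound becomes $(\alpha/\beta)^{2l}\le (a/b)^4(1+o(1))$, and comparing $(a/b)^4$ with $C\ge 0.87\sqrt{a/b}$ (the $0.87$ coming from $(\sqrt d-\sqrt b)/(\sqrt d-\sqrt a)>1-1/(2\sqrt{15})$ when $\varepsilon=-1$) gives a contradiction with huge margin since $a/b<1/3$. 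The paper proceeds with explicit numerical constants throughout rather than $O(\cdot)$ and $o(1)$, which is safer given that the inequalities must hold at the borderline values $b=15$, $d=1520$.
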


\begin{proof}
From \eqref{eq:adll} and \eqref{eq:bdmm} respectively \eqref{eq:adl} and \eqref{eq:bdm}, we get the recursions
\begin{align*}
W^{(a,d)}_0&=\varepsilon,& W^{(a,d)}_1 &= \varepsilon x+ d,& W^{(a,d)}_{l+2}& =2xW^{(a,d)}_{l+1}-W^{(a,d)}_{l},& l &\ge 0, \\
W^{(b,d)}_0&=\varepsilon,& W^{(b,d)}_1 &= \varepsilon y+ d,& W^{(b,d)}_{m+2}& =2yW^{(b,d)}_{m+1}-W^{(b,d)}_{m},& m &\ge 0.
\end{align*}
Furthermore, solving these recursions explicitly we get
\begin{align*}
W_{2l}^{(a,d)}&=\frac{d+\varepsilon\sqrt{ad}}{2\sqrt{ad}}\left(x+\sqrt{ad}\right)^{2l}+\frac{-d+\varepsilon\sqrt{ad}}{2\sqrt{ad}}\left(x-\sqrt{ad}\right)^{2l}\\
W_{2m}^{(b,d)}&=\frac{d+\varepsilon\sqrt{bd}}{2\sqrt{bd}}\left(y+\sqrt{bd}\right)^{2m}+\frac{-d+\varepsilon\sqrt{bd}}{2\sqrt{bd}}\left(y-\sqrt{bd}\right)^{2m}.
\end{align*}

We prove that $2l\le 3 m$ first. By considering the intersection of the two recursions $W^{(a,d)}_{2l}$ and $W^{(b,d)}_{2m}$, i.e. the equation $W^{(a,d)}_{2l} = W^{(b,d)}_{2m}$, we get the inequality
\begin{multline*}
\frac{d+\varepsilon\sqrt{ad}}{2\sqrt{ad}}\left(x+\sqrt{ad}\right)^{2l}-\frac{d+\sqrt{ad}}{2\sqrt{ad}}<W^{(a,d)}_{2l} = W^{(b,d)}_{2m}\\
< \frac{d+\varepsilon\sqrt{bd}}{2\sqrt{bd}}\left(y+\sqrt{bd}\right)^{2m}.
\end{multline*}
The inequality above holds since $0<x-\sqrt{ad},\; y-\sqrt{bd}<1$. We add $\frac{d+\sqrt{ad}}{2\sqrt{ad}}$ to both sides of the inequality and multiply by $\frac{2\sqrt{ad}}{d+\varepsilon\sqrt{ad}}$ afterwards. Then, we get
\begin{align*}
(x+\sqrt{ad})^{2l}&<\frac{\sqrt{a}}{\sqrt{b}}\cdot\frac{d+\varepsilon\sqrt{bd}}{d+\varepsilon\sqrt{ad}}(y+\sqrt{bd})^{2m}+\frac{d+\sqrt{ad}}{d-\sqrt{ad})}\\
&<\left(\frac{\sqrt{a}}{\sqrt{b}}\cdot \frac{d+3\sqrt{bd}}{d-\sqrt{ad}}+1\right)(y+\sqrt{bd})^{2m}.
\end{align*}
Note that
$$\frac{d+3\sqrt{bd}}{d-\sqrt{ad}}\leq 1+\frac{4\sqrt{bd}}{d-\sqrt{ad}}=1+\frac{4\sqrt{b/d}}{1-\sqrt{a/d}}< 1.42$$
since $d>4abc$ and $b\geq 15$ and $c\geq 24$ due to Lemmas \ref{lem:d+ieq} and \ref{lem:min_bcd}. Moreover we have that $b>3a$, i.e. $\sqrt{a/b}\leq \sqrt{1/3}$. Therefore, we obtain that 
$$\frac{\sqrt{a}}{\sqrt{b}}\cdot \frac{d+3\sqrt{bd}}{d-\sqrt{ad}}+1<1.42\cdot \sqrt{1/3}+1<1.82.$$

Assume that $2l\ge 3m+1$. Then, we have that $(x+\sqrt{ad})^{3m+1}<1.82(y+\sqrt{bd})^{2m}$. Since $x+\sqrt{ad}>1.82$, we get $(x+\sqrt{ad})^{3m}<(y+\sqrt{bd})^{2m}$ and furthermore 
$$
(x+\sqrt{ad})^{3}<(y+\sqrt{bd})^{2}.
$$ 
From $x+\sqrt{ad}>2\sqrt{ad}$ and $y+\sqrt{bd}<2.1\sqrt{bd}$ (note that $b\geq 15$ and $d\geq 1520$), we have 
$$
64a^3d^3=(4ad)^3 <(4.41bd)^2<20b^2d^2
$$
which yields 
$$
3.2a^3d<b^2. 
$$
But this is a contradiction to the fact that $d=d_+ > 4abc >4ab^2.$ Therefore, we have $2l < 3m+1$ and deduce $2l\le 3m$. 

The proof that $m<l$ is similar. However, let us prove that $l=m=1$ is not a solution to $W^{(a,d)}_{2l} = W^{(b,d)}_{2m}$ separately. That is
we consider the equation
$$2xd+2\varepsilon x^2-\varepsilon=2yd+2\varepsilon y^2-\varepsilon.$$
But since $0<x<y<d$ and since the function $f(x)=2xd+2\varepsilon x^2-\varepsilon$ is strictly increasing for $x>-d$ if $\varepsilon=1$ and strictly decreasing for $x<d$ if $\varepsilon=-1$
the above equation cannot hold. Since we know by Lemma \ref{lem:lmn} that $m\leq l$, we may assume for the rest of the proof that $2m=2l\geq 4$.

Thus, we consider the equation $W^{(a,d)}_{2m} = W^{(b,d)}_{2m}$ and obtain
the inequality 
\begin{multline*}
\frac{d+\varepsilon\sqrt{ad}}{2\sqrt{ad}}\left(x+\sqrt{ad}\right)^{2m}>W^{(a,d)}_{2m} = W^{(b,d)}_{2m}\\
> \frac{d+\varepsilon\sqrt{bd}}{2\sqrt{bd}}\left(y+\sqrt{bd}\right)^{2m}-\frac{d+\sqrt{bd}}{2\sqrt{bd}}.
\end{multline*}
By some similar manipulations as above we obtain
$$
 \left(y+\sqrt{bd}\right)^{2m}<\left(\sqrt{\frac ba}\frac{d+\sqrt{ad}}{d-\sqrt{bd}}+\frac{d+\sqrt{ad}}{d-\sqrt{bd}}\right)(x+\sqrt{ad})^{2m}.
$$
Since
\begin{align*}
\sqrt{\frac ba}\frac{d+\sqrt{ad}}{d-\sqrt{bd}}&+\frac{d+\sqrt{ad}}{d-\sqrt{bd}}=\sqrt{\frac ba}\left( 1+\frac{\sqrt{ad}+\sqrt{bd}}{d-\sqrt{bd}}+\sqrt{\frac ab}+\frac{2\sqrt{ad}}{d-\sqrt{bd}}\right)\\
&<\sqrt{\frac ba}\left(1+\sqrt{1/3}+\frac{3\sqrt{ad}+\sqrt{bd}}{d-\sqrt{bd}}\right)\\
&<\sqrt{\frac ba}\left(1+\sqrt{1/3}+\frac{3\sqrt{\frac{1}{4bc}}+\sqrt{\frac{1}{4ac}}}{1-\sqrt{\frac{1}{4ac}}}\right)\\
&<1.78 \sqrt{\frac ba}
\end{align*}
we get
\begin{equation}\label{eq:m<l_ieq}
\left(\frac{y+\sqrt{bd}}{x+\sqrt{ad}}\right)^{2m}<1.78 \sqrt{\frac ba}.
\end{equation}
On the other hand we have 
$$
\frac{y+\sqrt{bd}}{x+\sqrt{ad}}>\frac{2\sqrt{bd}}{\sqrt{ad+1}+\sqrt{ad}}>0.999\sqrt{\frac ba}.
$$
Note that $d\geq 1520$ by Lemma \ref{lem:min_bcd} and $\frac ba>3$ by Lemma \ref{lem:b3a}. Since we may assume from the discussion above that $2m\geq 4$ we obtain from inequality \eqref{eq:m<l_ieq}
that
$$
0.996\frac {b^2}{a^2}<\left(\frac{y+\sqrt{bd}}{x+\sqrt{ad}}\right)^{2m}<1.78 \sqrt{\frac ba}.
$$
which yields $b/a<1.48$ which is a contradiction to Lemma \ref{lem:b3a} which states that $b/a>3$.
Therefore inequality \eqref{eq:m<l_ieq} holds only if $m=l=0$, i.e. we have that $m<l$ unless $m=0$.
\end{proof}

\begin{lemma}\label{lem:hm}
We have $h\ge 2m$.
\end{lemma}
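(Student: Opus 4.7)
The approach is to compare sizes, in the spirit of Lemma \ref{lem:2l3m}, exploiting that the same integer $Y$ equals both $Y^{(a,b)}_{2h}$ and $Y^{(b,d)}_{2m}$. Setting $\alpha=r+\sqrt{ab}$ and $\beta=y+\sqrt{bd}$ and using $(r-\sqrt{ab})(r+\sqrt{ab})=1$ together with the analogous identity for $y$ and $\sqrt{bd}$, the explicit solutions of the Pell recursions read
\begin{align*}
Y^{(a,b)}_{2h} &= \frac{\sqrt{a}+\sqrt{b}}{2\sqrt{a}}\alpha^{2h} + \frac{\sqrt{a}-\sqrt{b}}{2\sqrt{a}}\alpha^{-2h},\\
Y^{(b,d)}_{2m} &= \frac{\sqrt{d}+\varepsilon\sqrt{b}}{2\sqrt{d}}\beta^{2m} + \frac{\sqrt{d}-\varepsilon\sqrt{b}}{2\sqrt{d}}\beta^{-2m}.
\end{align*}
The second term of the first line is negative, while both terms of the second are positive (since $\sqrt{d}>\sqrt{b}$), so discarding them gives
\[
\frac{\sqrt{a}+\sqrt{b}}{2\sqrt{a}}\alpha^{2h} \;>\; Y \;>\; \frac{\sqrt{d}-\sqrt{b}}{2\sqrt{d}}\beta^{2m}.
\]

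I would next assume, for contradiction, that $h\leq 2m-1$. Then $\alpha^{2h}\leq\alpha^{4m-2}$, so the above inequality rearranges to
\[
\frac{\alpha^{4m-2}}{\beta^{2m}} \;>\; \frac{\sqrt{a}(\sqrt{d}-\sqrt{b})}{(\sqrt{a}+\sqrt{b})\sqrt{d}}.
\]
For the right-hand side, $\sqrt{d}-\sqrt{b}\geq \sqrt{d}/2$ (using $d\geq 1520\geq 4b$ from Lemma \ref{lem:min_bcd}) together with $\sqrt{a}+\sqrt{b}\leq 2\sqrt{b}$ give the lower bound $\sqrt{a/b}/4$. For the left-hand side, $\alpha^2 < 4(ab+1)\leq 8ab$ combined with $\beta^2 > 4bd > 16ab^2c$ (using $d>4abc$ from Lemma \ref{lem:d+ieq}) yields
\[
\frac{\alpha^{4m-2}}{\beta^{2m}} \;\leq\; \frac{(8ab)^{2m-1}}{(16ab^2c)^m} \;=\; \frac{(4a)^{m-1}}{2bc^m} \;\leq\; \frac{1}{2bc},
\]
where the final step uses $4a\leq c$, which holds both in the Euler case (since then $c>a+b>4a$ thanks to $b>3a$ from Lemma \ref{lem:b3a}) and in the non-Euler case (since $c>4ab\geq 4a$).

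Combining these two bounds, the sought contradiction reduces to $1/(2bc) < \sqrt{a/b}/4$, equivalently $4/(bc^2)<a$, which is trivial from $a\geq 1$ and $bc^2\geq 15\cdot 24^2>8000$ by Lemma \ref{lem:min_bcd}. Since Lemma \ref{lem:lmn} guarantees $m\geq 4$ in the setting of the lemma, this covers all relevant cases (the case $m=0$ being immediate). The subtle point, and the main obstacle, is the clean cancellation $(4a)^{m-1}\leq c^{m-1}$ that removes the $m$-dependence from the upper bound on $\alpha^{4m-2}/\beta^{2m}$; a cruder estimate (e.g.\ merely $a\leq b/3$) leaves a factor $(4/3)^m$ in the upper bound that grows with $m$ and prevents the argument from closing uniformly in $m$.
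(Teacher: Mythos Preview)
Your proof is correct and follows essentially the same route as the paper: compare the explicit closed forms for $Y^{(a,b)}_{2h}$ and $Y^{(b,d)}_{2m}$ and exploit $d>4abc$ to force the required inequality between powers of $\alpha=r+\sqrt{ab}$ and $\beta=y+\sqrt{bd}$. The paper packages the same estimate as the single claim $\beta>\alpha^2$ (which it proves via $2\sqrt{bd}>(2\sqrt{ab+1})^2$, again reducing to $d>4abc$), obtaining $\alpha^{4m}<\beta^{2m}<\alpha^{2h+1}$ directly; your version instead bounds $\alpha^{4m-2}/\beta^{2m}$ numerically and cancels with $4a\le c$, which is an equivalent bookkeeping of the same inequality.

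One slip to fix: the parenthetical ``using $d\geq 1520\geq 4b$ from Lemma~\ref{lem:min_bcd}'' is wrong, since nothing bounds $b$ by $380$. The inequality $d\ge 4b$ you actually need follows immediately from $d>4abc$ (Lemma~\ref{lem:d+ieq}), which you already invoke two lines later.
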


\begin{proof}
We proceed similarly as in the proof of Lemma \ref{lem:2l3m}. In this case, we consider the equation $Y=Y^{(a,b)}_{2h}  = Y^{(b,d)}_{2m}$. From \eqref{eq:abii} and \eqref{eq:bdmm}, we obtain the recursions
\begin{align*}
Y^{(a,b)}_0&=1,& Y^{(a,b)}_1& = r + b,& Y^{(a,b)}_{h+2} &=2rY^{(a,b)}_{h+1}-Y^{(a,b)}_{h},& h&\ge 0, \\
Y^{(b,d)}_0&=1,& Y^{(b,d)}_1& =  y+ \varepsilon b,& Y^{(b,d)}_{m+2} &=2yY^{(b,d)}_{m+1}-Y^{(b,d)}_{m},& m&\ge 0.
\end{align*}

Solving the first recursion, we get
$$
Y_{2h}^{(a,b)}=\frac{b+\sqrt{ab}}{2\sqrt{ab}}(r+\sqrt{ab})^{2h}-\frac{b-\sqrt{ab}}{2\sqrt{ab}}(r-\sqrt{ab})^{2h}
$$
and deduce that
$$Y^{(a,b)}_{2h}<\frac{1}{2}\left(1+\sqrt{\frac{b}{a}}\right)(r+\sqrt{ab})^{2h}.$$

Solving the second recursion, we obtain
$$
Y_{2m}^{(b,d)}=\frac{\varepsilon b+\sqrt{bd}}{2\sqrt{bd}}(y+\sqrt{bd})^{2m}+\frac{\sqrt{bd}-\varepsilon b}{2\sqrt{bd}}(y-\sqrt{bd})^{2m},
$$
which yields 
$$
Y^{(b,d)}_{2m}>\frac{\sqrt{bd}-b}{2\sqrt{bd}}(y+\sqrt{bd})^{2m}>\frac{1}{2}(1-\sqrt{b/d})(y+\sqrt{bd})^{2m}.
$$

If $Y^{(a,b)}_{2h}  = Y^{(b,d)}_{2m}$, then we have
$$
(y+\sqrt{bd})^{2m}<\frac{1+\sqrt{\frac{b}{a}}}{1-\sqrt{\frac{b}{d}}}(r+\sqrt{ab})^{2h}<(r+\sqrt{ab})^{2h+1}
$$
since
$$
\frac{1+\sqrt{\frac{b}{a}}}{1-\sqrt{\frac{b}{d}}}\leq \left(\sqrt{\frac ba}+1\right)\frac{1}{1-\sqrt{\frac 1{4ac}}}<1.114\sqrt{ab}(1+\frac{1}{\sqrt{ba}}) <1.41 \sqrt{ba} < r+\sqrt{ab}.
$$
Note that $ac\geq 24$ and $ab \geq 15$ in any case.

We claim that $y+\sqrt{bd}>(r+\sqrt{ab})^2$ and deduce from this claim that 
$$
(r+\sqrt{ab})^{4m}<(r+\sqrt{ab})^{2h+1},
$$
which shows that $4m<2h+1$. Thus we get $4m\le 2h$, hence $h\ge 2m$.

Therefore, we are left to justify our claim. In order to show the claim, it suffices to prove that
$$y+\sqrt{bd}>2\sqrt{bd}>\left(2\sqrt{\frac 43 ab}\right)^2>\left(2\sqrt{ab+1}\right)^2>\left(r+\sqrt{ab}\right)^2.$$
The only non obvious inequality is the second one. Squaring both sides yields $bd>\frac{64}9 a^2b^2$.
Since $d>4abc$, we have to show that $ac>\frac{16}9 a^2$, which is true since $c>b>3a$ (cf. Lemma \ref{lem:b3a}).
\end{proof}

Next, let us state the following useful observation.

\begin{lemma}\label{lem:cong_eq}
 Let $\{a,b,c,d,e\}$ be a Diophantine quintuple, then we have
\begin{equation}\label{eq:ab4d}
al^2 + \varepsilon xl \equiv bm^2 +\varepsilon ym\pmod{4d}.
\end{equation}
\end{lemma}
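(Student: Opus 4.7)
The plan is to compute $W$ modulo $8d^2$ from each of the two representations \eqref{eq:adll} and \eqref{eq:bdmm}, and then equate the results. As a preliminary step, I would pass from the raw recursion $W^{(a,d)}_{n+2} = 2xW^{(a,d)}_{n+1} - W^{(a,d)}_n$ to a recursion on the even subsequence $W_l := W^{(a,d)}_{2l}$. A short calculation using $x^2 = 1+ad$ yields
\begin{equation*}
W_{l+1} = (2+4ad)W_l - W_{l-1}, \qquad W_0 = \varepsilon, \quad W_1 = \varepsilon + 2\varepsilon ad + 2xd.
\end{equation*}

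The heart of the argument is to find the right polynomial-in-$l$ approximation of $W_l$ to sufficient precision. Motivated by $W_0$ and $W_1$, I would propose the ansatz
\begin{equation*}
W_l \equiv \varepsilon + 2xdl + 2\varepsilon ad l^2 \pmod{8d^2}
\end{equation*}
and verify it by induction on $l$. Setting $R_l := W_l - (\varepsilon + 2xdl + 2\varepsilon adl^2)$, a direct substitution into the recursion shows
\begin{equation*}
R_{l+1} = (2+4ad)R_l - R_{l-1} + 8ad^2\,l\,(x+\varepsilon al), \qquad R_0 = R_1 = 0,
\end{equation*}
and since the source term $8ad^2 l(x+\varepsilon al)$ is manifestly divisible by $8d^2$, the induction gives $R_l \equiv 0 \pmod{8d^2}$ for every $l \ge 0$.

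Applied verbatim to the pair $(b,d)$ in place of $(a,d)$, the same argument produces $W^{(b,d)}_{2m} \equiv \varepsilon + 2ydm + 2\varepsilon bdm^2 \pmod{8d^2}$. Using the identity $W = W^{(a,d)}_{2l} = W^{(b,d)}_{2m}$ and subtracting the two expressions, one obtains
\begin{equation*}
2d(\varepsilon al^2 + xl) \equiv 2d(\varepsilon bm^2 + ym) \pmod{8d^2}.
\end{equation*}
Cancelling the common factor $2d$ and multiplying by $\varepsilon$ yields the claim $al^2 + \varepsilon xl \equiv bm^2 + \varepsilon ym \pmod{4d}$.

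The main obstacle is identifying the correct modulus for the induction: working merely modulo $d^2$ or $4d^2$ would only give congruences modulo $d$ or $2d$, and it is precisely the precision $8d^2$ that delivers the factor $4d$ in the final answer. Fortunately, once the recursion for $R_l$ is written down, its source term is a multiple of $8d^2$ by inspection, so the induction is mechanical.
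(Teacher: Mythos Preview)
Your proof is correct. The paper does not carry out the computation itself but simply invokes Lemma~4 of Dujella~\cite{Dujella:2001}, applied to the quadruple $\{a,b,d,e\}$ with the even--index case and the initial values coming from Lemmas~\ref{lem:1} and~\ref{lem:2}; what you have written is precisely the self-contained derivation of that cited result in the present setting. Your induction on the even subsequence, with the source term $8ad^2 l(x+\varepsilon al)$ visibly divisible by $8d^2$, is clean and matches the standard argument behind Dujella's lemma, so there is nothing missing.
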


\begin{proof}
 The congruence above is a direct consequence of a result due to Dujella \cite[Lemma 4]{Dujella:2001}
 in which several congruence relations between the indices where shown. In order to obtain the result,
 we apply \cite[Lemma 4]{Dujella:2001} to the Diophantine quadruple $\{a,b,d,e\}$ and note that since $\{a,b,d,e\}$
 is part of a Diophantine quintuple we may consider only the even cases of \cite[Lemma 4]{Dujella:2001} (i.e. part (1) and (3) of \cite[Lemma 4]{Dujella:2001})
 and due to Lemmas \ref{lem:1} and \ref{lem:2} we obtain congruence~\eqref{eq:ab4d}.
\end{proof}

Most researchers studying Diophantine quintuples used similar congruences to discuss lower bounds for various indices.
In \cite{Wu-He:2014}, Wu and the first author got a strong lower bound for $m$, namely $m\ge 0.48 \sqrt{d/b}$. 
A slight improvement of the constant from $0.48$ to $0.5$ was achieved by Cipu~\cite{Cipu}. An even better lower bound was given by Cipu and Trudgian in~\cite{Cipu-Trudgian}.
However we surpass these bounds by proving the following lemma.   

\begin{lemma} \label{lem:mdb}
If $W^{(a,d)}_{2l} = W^{(b,d)}_{2m}$, then $m\ge \left(\frac{\sqrt{17}-1}{2}\right)\sqrt{d/b}$.  
\end{lemma}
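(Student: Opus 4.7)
The plan is to proceed by contradiction: assume $m < \phi\sqrt{d/b}$, where $\phi = (\sqrt{17}-1)/2$ is the positive root of $X^2+X-4=0$, so that $\phi^2 + \phi = 4$. This algebraic identity is what makes this particular constant appear. The main engine is the congruence of Lemma \ref{lem:cong_eq},
\[
al^2 + \varepsilon xl \equiv bm^2 + \varepsilon ym \pmod{4d},
\]
combined with $m<l\le 3m/2$ from Lemma \ref{lem:2l3m} and $b>3a$ from Lemma \ref{lem:b3a}.

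First, I would estimate both sides of the congruence. Under the standing assumption, $bm^2 < \phi^2 d$ and $ym = m\sqrt{bd+1} < \phi d + O(1)$, so $bm^2 + ym < (\phi^2+\phi)d + O(1) = 4d + O(1)$, and a careful estimate shows $bm^2 + ym < 4d$ outright. Using $l\le 3m/2$ and $a<b/3$, one likewise obtains $al^2 + xl < \bigl(\tfrac{3\phi^2}{4}+\tfrac{\sqrt{3}\phi}{2}\bigr)d < 4d$. In particular, both $|al^2+\varepsilon xl|$ and $|bm^2+\varepsilon ym|$ are strictly less than $4d$, so the congruence places strong constraints on these quantities.

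For $\varepsilon=+1$, I would establish the strict inequality $al^2 + xl < bm^2 + ym$. Since $(l/m)^2\le 9/4 < 3 < b/a$ we get $al^2 < bm^2$, and the estimate $d(bm^2-al^2) > \tfrac{3a}{4}dm^2 \ge l^2-m^2$ gives $l^2(ad+1) < m^2(bd+1)$, hence $xl<ym$. Combining these strict inequalities with the congruence, the positive difference $bm^2+ym-(al^2+xl)$ is a positive multiple of $4d$, so $bm^2+ym \ge al^2+xl+4d > 4d + \sqrt{ad}$. Solving this quadratic in $m$ and exploiting the identity $\phi^2+\phi=4$ explicitly produces $m>\phi\sqrt{d/b}$, the desired contradiction. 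For $\varepsilon=-1$, the absolute-value bound still forces the equality $al^2-xl = bm^2-ym$, i.e., $\Delta := bm^2-al^2 = ym - xl > 0$. Squaring and using $x^2=ad+1$, $y^2=bd+1$ yields the Diophantine identity
\[
\Delta(d - 2xl - \Delta) = l^2 - m^2,
\]
from which $d > 2xl + \Delta$. A further squaring of $d+\Delta>2ym$ produces $(d-al^2-bm^2)^2 > 4l^2(abm^2+1)$, and case analysis on the sign of $d-al^2-bm^2$ gives either $(m\sqrt{b}+l\sqrt{a})^2 < d$ or $(m\sqrt{b}-l\sqrt{a})^2 > d$. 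Both alternatives, combined with $l\ge m+1$ and $b>3a$, lead back to a lower bound on $m$ that, once $\phi^2+\phi=4$ is invoked, contradicts $m<\phi\sqrt{d/b}$.

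The hardest step will be the case $\varepsilon = -1$, where no immediate sign contradiction is available (unlike $\varepsilon=+1$). One must trace through the Diophantine identity $\Delta(d-2xl-\Delta)=l^2-m^2$ carefully and rule out all sub-cases of the subsequent $(m\sqrt{b}\pm l\sqrt{a})^2$ alternative; here the exact value of $\phi$ is what makes every sub-case close off.
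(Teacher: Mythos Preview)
Your overall architecture matches the paper's: use the congruence $al^2+\varepsilon xl\equiv bm^2+\varepsilon ym\pmod{4d}$, rule out exact equality, force a gap of at least $4d$, obtain $4d<bm^2+ym$, and finish via the identity $\phi^2+\phi=4$. But the paper's execution is both simpler and more complete, and your $\varepsilon=-1$ branch has a genuine gap.

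First, the paper does not split on $\varepsilon$. From the putative equality $al^2-bm^2=\varepsilon(ym-xl)$ one multiplies by $ym+xl$ and collects terms to get the single identity
\[
(bm^2-al^2)\bigl(d+\varepsilon(ym+xl)\bigr)=l^2-m^2,
\]
valid for both signs of $\varepsilon$. Both factors on the left are nonzero integers (else $l=m$, excluded by Lemma~\ref{lem:2l3m}), so $bm^2-al^2\le l^2-m^2$. With $(l/m)^2\le 9/4$ and $b>3a$ this forces $a<5/3$, hence $a=1$, and then $b\ge 15$ gives a contradiction. This one paragraph replaces your entire case analysis. Once equality is excluded, the paper bounds $4d\le|bm^2-al^2|+|ym-xl|<bm^2+ym$ directly (using $bm^2>al^2$ and $ym>xl$, both of which you also prove), and solves the resulting quadratic inequality.

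Second, your $\varepsilon=+1$ paragraph is self-contradictory as written: you first argue $bm^2+ym<4d$ under the standing assumption, then claim the positive difference is ``a positive multiple of $4d$'', which would make $bm^2+ym>4d$. The correct conclusion from your own bounds is simply that two integers in $(0,4d)$ congruent modulo $4d$ must be equal, contradicting the strict inequality you established. No further ``solving the quadratic'' is needed at that point.

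Third, and most importantly, your $\varepsilon=-1$ plan does not close. You correctly derive $\Delta(d-2xl-\Delta)=l^2-m^2$ with $\Delta=bm^2-al^2>0$, and your dichotomy $(m\sqrt b+l\sqrt a)^2<d$ versus $(m\sqrt b-l\sqrt a)^2>d$ is valid. The second alternative does give a lower bound: $m\sqrt b-l\sqrt a>\sqrt d$ combined with $l>m$ and $b>3a$ yields $m>\frac{1}{1-1/\sqrt3}\sqrt{d/b}\approx 2.37\sqrt{d/b}>\phi\sqrt{d/b}$, a contradiction. But the first alternative gives only $m<\sqrt{d/b}$, an \emph{upper} bound consistent with the hypothesis $m<\phi\sqrt{d/b}$; nothing in your sketch produces a lower bound from it, and invoking $\phi^2+\phi=4$ does not help there. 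The way to kill this branch is precisely the paper's observation you bypassed: $d-xl-ym$ is a positive \emph{integer}, so from your own identity $\Delta\le l^2-m^2\le\tfrac54 m^2$, while $\Delta>(b-\tfrac{9a}{4})m^2\ge\tfrac{3a}{4}m^2$ forces $a=1$ and then $b\ge 15$ gives a contradiction. Once you see this, the whole $(m\sqrt b\pm l\sqrt a)^2$ detour and the $\varepsilon$-split become unnecessary.
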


\begin{proof}
We consider congruence relation \eqref{eq:ab4d} and assume for the moment that 
\begin{equation}\label{eq:abyx}
al^2 + \varepsilon xl = bm^2 +\varepsilon ym,
\end{equation}
then we obtain
$$
al^2-bm^2 = \varepsilon (ym-xl).
$$
This implies that 
\begin{align*}
(ym+xl)\left(al^2-bm^2\right) &=\varepsilon\left(y^2m^2 - x^2l^2\right)\\
& = \varepsilon\left((bd+1)m^2 - (ad+1)l^2\right)\\
 & = \varepsilon\left(d(bm^2 - al^2) + m^2 -l^2\right).
\end{align*}
Collecting terms and taking absolute values results in
$$
\left|l^2 - m^2\right| = \left|(d+\varepsilon(ym+xl))(bm^2 - al^2)\right|.
$$
If $bm^2 - al^2 =0$ or $d+\varepsilon(ym+xl)=0$, then $l=m$. Due to Lemma \ref{lem:2l3m} we deduce that $l=m=0$, which is impossible. Hence,
$$
\left|l^2-m^2\right| \ge \left|bm^2 - al^2\right|
$$
and we obtain the inequality
$$
\left|\frac{b}{a} - \frac{l^2}{m^2}\right| \le \frac{\left|l^2 - m^2\right|}{am^2} = \frac{\left|(l/m)^2-1\right|}{a}.
$$
Using Lemma~\ref{lem:2l3m}, we have $(l/m)^2<2.25$ and by Lemma~\ref{lem:b3a} we get $b/a>3$. Thus, we obtain
\begin{equation}\label{eq:ineq}
0.75=3-2.25\le \left|\frac{b}{a} - \frac{l^2}{m^2}\right| \le \frac{|(l/m)^2-1|}{a}<\frac{1.25}{a},
\end{equation}
i.e. $a<\frac{5}{3}$. Therefore, we only need to consider the case that $a=1$. When $a=1$ and $b\geq 15$ (cf. Lemma \ref{lem:min_bcd}), then inequality \eqref{eq:ineq} 
is impossible. Therefore, relation \eqref{eq:abyx} does not hold. 

In the case that equation \eqref{eq:abyx} does not hold, the left side and the right side of \eqref{eq:abyx} differ at least by $4d$. Therefore, we get the inequality
$$4d\leq |bm^2-al^2+\varepsilon (ym-xl)| \leq |bm^2-al^2| + |ym-xl| < bm^2 +ym. $$
Thus,  we have 
\begin{equation}\label{ieq:lem12}
4d\le bm^2 +ym-1 =bm^2 + m\sqrt{bd} +\frac{m}{\sqrt{bd+1}+\sqrt{bd}}-1,
\end{equation}
since
$$ym-m\sqrt{bd}  = m(\sqrt{bd+1}-\sqrt{bd})  = \frac{m}{\sqrt{bd+1}+\sqrt{bd}}. $$
Assume for the moment that $m\le\frac{\sqrt{17}-1}{2}\sqrt{d/b}$, then we have
$$
m\le\frac{\sqrt{17}-1}{2}\sqrt{d/b}<\frac{\sqrt{17}-1}{2}\sqrt{d/15}<\sqrt{d}<\sqrt{bd+1}+\sqrt{bd}. 
$$
Therefore, we have $\frac{m}{\sqrt{bd+1}+\sqrt{bd}}-1<0$. With these inequalities at hand,
we obtain from inequality \eqref{ieq:lem12} the following inequality
$$
4d<bm^2+m\sqrt{bd}\frac{(\sqrt{17}-1)^2}{4}d + \frac{\sqrt{17}-1}{2}d =4d,
$$
which is impossible. Hence, we must have $m> \frac{\sqrt{17}-1}{2} \sqrt{d/b}$. 
\end{proof}

Combining all the above lemmas yields the main result of this section.

\begin{lemma}\label{lem:hbd}
We have $h>(2\sqrt{17}-2)\sqrt{ac}>6.2462\sqrt{ac}$. 
\end{lemma}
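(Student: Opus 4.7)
The plan is to chain together three of the preceding results. First, I would invoke Lemma~\ref{lem:hm} to obtain $h \geq 2m$. Then, since by Theorem~\ref{thm:fujita} we have $d = d_+$, Lemma~\ref{lem:d+ieq} gives the estimate $d > 4abc$, and consequently
\begin{equation*}
\sqrt{d/b} \;>\; \sqrt{4abc/b} \;=\; 2\sqrt{ac}.
\end{equation*}

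Next, I would apply Lemma~\ref{lem:mdb}, which gives $m \geq \tfrac{\sqrt{17}-1}{2}\sqrt{d/b}$. Combining this with the above and with $h \geq 2m$ yields
\begin{equation*}
h \;\geq\; 2m \;\geq\; (\sqrt{17}-1)\sqrt{d/b} \;>\; (\sqrt{17}-1)\cdot 2\sqrt{ac} \;=\; (2\sqrt{17}-2)\sqrt{ac}.
\end{equation*}
The numerical estimate $2\sqrt{17}-2 > 6.2462$ is then immediate.

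There is essentially no obstacle here, since all the hard work has been done in Lemmas~\ref{lem:hm} and~\ref{lem:mdb}; the statement simply packages those two bounds (together with $d > 4abc$) into a single clean inequality that will be convenient to cite in Section~\ref{sec:6}.
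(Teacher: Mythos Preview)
Your proposal is correct and matches the paper's proof essentially line for line: the paper also combines Lemma~\ref{lem:hm}, Lemma~\ref{lem:mdb}, and Lemma~\ref{lem:d+ieq} to get $h \geq 2m \geq (\sqrt{17}-1)\sqrt{d/b} > (\sqrt{17}-1)\sqrt{4abc/b} = (2\sqrt{17}-2)\sqrt{ac}$.
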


\begin{proof}
Combining Lemma~\ref{lem:d+ieq}, Lemma~\ref{lem:hm} and Lemma~\ref{lem:mdb}, we immediately get
$$
h\ge 2m \ge (\sqrt{17}-1)\sqrt{d/b} > (\sqrt{17}-1)\sqrt{4abc/b} 
$$ 
$$
=(2\sqrt{17}-2)\sqrt{ac}>6.2462\sqrt{ac}.
$$
\end{proof}

\section{Linear forms in logarithms}\label{sec:6}

In the last section, we found a lower bound for the exponent $h$. This section is devoted to finding good
upper bounds for $h$. This is done by using lower bounds for linear forms in logarithms. To formulate 
the results concerning lower bounds for linear forms in logarithms, we recall the notation of
logarithmic height.

For any non-zero algebraic number $\gamma$ of degree $D$
over $\mathbb{Q}$, whose minimal polynomial over $\mathbb{Z}$ is $A\prod_{j=1}^D
\left(X-\gamma^{( j)} \right)$, we denote by
$$
h(\gamma) = \frac{1}{D} \left( \log A + \sum_{j=1}^D
\log\max\left(1, \betrag{\gamma\conj j}\right)\right)
$$
its absolute logarithmic height. With this notation at hand, we can state the following useful
result due to Matveev \cite{Matveev:2000}.

\begin{theorem}\label{thm:Matveev}
Let $\Lambda$ be a linear form in logarithms of $N$ multiplicatively independent totally real algebraic
numbers $\alpha_1,\ldots, \alpha_N$ with rational integer coefficients $b_1, \ldots, b_N$ such that $b_N  \neq 0$.
Let $h(\alpha_j)$ denote the absolute logarithmic height of $\alpha_j$, for $1\le j \le N$.
Define the numbers $D,A_j\; (1\le j \le N)$ and $E$ by 
$D:=[\mathbb{Q}(\alpha_1,\ldots, \alpha_{N}):\QQ]$, $A_j := \max\{ Dh(\alpha_j), |\log \alpha_j|\}$ and $E:=\max\{1, \max\{|b_j| A_j/A_N; 1\le j \le N\}\}$. Then,   
$$
\log |\Lambda| > - C(N) C_0 W_0 D^2 \Omega,  
$$
where 
\begin{gather*}
C(N): = \frac{8}{(N-1)!} (N+2) (2N+3)(4e(N+1))^{N+1},\\  
C_0 := \log (e^{4.4N+7} N^{5.5} D^2 \log(eD)),\\
W_0 := \log (1.5eED\log(eD)),\qquad  \Omega := A_1\cdots A_N. 
\end{gather*}
\end{theorem}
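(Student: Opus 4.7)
The stated theorem is the main result of Matveev's 2000 paper \cite{Matveev:2000}, and a full proof is far beyond what can be sketched in a few paragraphs; indeed, the authors of the present paper simply cite it. Nevertheless, I can outline the broad architecture of the argument, which follows Baker's method with the specific refinements that are responsible for the sharp dependence on $N$ and on $E$ that distinguish Matveev's bound from its predecessors.

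The overall strategy is a proof by contradiction. One assumes that $|\Lambda|$ is smaller than the claimed lower bound and constructs an auxiliary function
$$
\Phi(z_1, \ldots, z_{N-1}) \;=\; \sum_{\lambda_1=0}^{L_1} \cdots \sum_{\lambda_{N-1}=0}^{L_{N-1}} p(\lambda_1, \ldots, \lambda_{N-1})\,\alpha_1^{\lambda_1 z_1}\cdots \alpha_{N-1}^{\lambda_{N-1} z_{N-1}},
$$
with integer coefficients $p(\lambda)$ of controlled size. The coefficients are produced by a refined version of Siegel's lemma (in Matveev's treatment, by the Bombieri--Vaaler absolute Siegel lemma or by an interpolation-determinant argument \`a la Laurent--Mignotte--Nesterenko), forcing $\Phi$ together with many of its partial derivatives to vanish on a suitable rectangular grid of integer points. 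The assumed smallness of $|\Lambda|$ is then used to replace one of the variables, essentially collapsing $\alpha_N^{b_N}$ against a monomial in the other $\alpha_i$'s modulo a very small error, so that $\Phi$ takes extremely small values on a \emph{larger} set than the Siegel-lemma vanishing alone would predict.

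The core analytic tool is an extrapolation procedure: a maximum-modulus (Schwarz lemma) estimate on a growing polydisc, combined with a zero-estimate such as Philippon's, shows that $\Phi$ must in fact vanish identically on the extended grid unless the $\alpha_i$ are multiplicatively dependent in a strong sense. The resulting algebraic relation contradicts the hypothesis that $\alpha_1,\ldots,\alpha_N$ are multiplicatively independent, yielding the desired lower bound for $|\Lambda|$. Matveev's specific technical contributions, which produce the explicit shape of the constants $C(N)$, $C_0$ and $W_0$, lie in (i) replacing the classical Siegel lemma by an absolute height argument, removing a factor growing like $(\log E)^N$ and leaving only the linear factor $W_0 = \log(1.5eED\log(eD))$; (ii) exploiting the totally real hypothesis to use sharper Hermite-type interpolation instead of Cauchy's formula; and (iii) a careful combinatorial optimization of the grid parameters $L_1,\ldots,L_{N-1}$ and the vanishing order.

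The main obstacle in reconstructing such a proof from scratch is the interplay between the \emph{arithmetic} step (Siegel's lemma producing integer coefficients with logarithmic heights bounded in terms of the $A_j$) and the \emph{analytic} step (the Schwarz--extrapolation argument that enlarges the set of small values); the constants in $C(N)$, in particular the factor $(4e(N+1))^{N+1}$, come from balancing these two steps, and any slippage in the balance enlarges them significantly. For the purposes of Section~\ref{sec:6} of the present paper, however, one simply applies Theorem~\ref{thm:Matveev} as a black box to the relevant linear form arising from the Pell-equation system \eqref{eq:abii}--\eqref{eq:cdnn}, with the algebraic numbers $r+\sqrt{ab}$, $s+\sqrt{ac}$, etc.\ playing the role of the $\alpha_i$, and combines the resulting lower bound with the lower bound $h>6.2462\sqrt{ac}$ from Lemma~\ref{lem:hbd} to extract the upper bound for $h$ announced in Section~\ref{sec:2}.
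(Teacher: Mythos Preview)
Your reading is correct: the paper does not prove Theorem~\ref{thm:Matveev} at all but simply quotes it from Matveev~\cite{Matveev:2000} and then applies it to the linear form $\Lambda_1$ with $N=3$, $D=4$; there is no proof in the paper to compare your sketch against. Your outline of the Baker--Matveev architecture (auxiliary function, Siegel/Bombieri--Vaaler lemma, extrapolation via Schwarz, zero estimate) is a reasonable high-level summary, and your closing remark about how the theorem is used in Section~\ref{sec:6} matches exactly what the authors do.
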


The main focus of this section will lie on the intersection $X=X^{(a,b)}_{2h} = X^{(a,c)}_{2j}$. From \eqref{eq:abii} and \eqref{eq:acjj},
we obtain recursions for $X^{(a,b)}_{2h}$ and $X^{(a,c)}_{2j}$. By solving these recursions, we obtain
\begin{equation} \label{eq:Xab}
\begin{split}
X^{(a,b)}_{2h}& = \frac{(\sqrt{a} + \sqrt{b})(r+\sqrt{ab})^{2h}-(\sqrt{a} - \sqrt{b})(r-\sqrt{ab})^{2h}}{2\sqrt{b}}\\
X^{(a,c)}_{2j}& = \frac{ ( \sqrt{a} + \sqrt{c})(s+\sqrt{ac})^{2j}-  ( \sqrt{a} - \sqrt{c})(s-\sqrt{ac})^{2j}}{2\sqrt{c}}.
\end{split}
\end{equation}
This motivates us to define
\begin{equation}\label{eq:Lamb1}
\Lambda_1 = 2h \log(r+\sqrt{ab})- 2j \log(s+\sqrt{ac}) + \log \left(\frac{\sqrt{c}(\sqrt{a}+\sqrt{b})}{\sqrt{b}(\sqrt{a}+\sqrt{c})}\right).
\end{equation} 
Our first aim is to show that $\Lambda_1$ is a rather small, but positive number.

\begin{lemma}\label{lem:low-Lamb1}
$0<\Lambda_1<(s+\sqrt{ac})^{-4j}$. 
\end{lemma}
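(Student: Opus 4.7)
The plan is to expand $e^{\Lambda_1}-1$ explicitly using \eqref{eq:Xab}, to read off the upper bound from the dominant term, and to establish positivity by a contradiction powered by an exact rational identity.

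Writing $\alpha := r+\sqrt{ab}$ and $\gamma := s+\sqrt{ac}$, so that $r-\sqrt{ab}=\alpha^{-1}$ and $s-\sqrt{ac}=\gamma^{-1}$, and splitting \eqref{eq:Xab} into ``main'' and ``reciprocal'' contributions, the identity $X^{(a,b)}_{2h}=X^{(a,c)}_{2j}$ rearranges (after absorbing the signs $\sqrt{a}-\sqrt{b}<0$ and $\sqrt{a}-\sqrt{c}<0$) to
\begin{equation*}
\sqrt{c}(\sqrt{a}+\sqrt{b})\alpha^{2h} - \sqrt{b}(\sqrt{a}+\sqrt{c})\gamma^{2j} = \sqrt{b}(\sqrt{c}-\sqrt{a})\gamma^{-2j} - \sqrt{c}(\sqrt{b}-\sqrt{a})\alpha^{-2h}.
\end{equation*}
Dividing by $\sqrt{b}(\sqrt{a}+\sqrt{c})\gamma^{2j}$ yields the central identity
\begin{equation*}
e^{\Lambda_1}-1 \;=\; \frac{\sqrt{c}-\sqrt{a}}{\sqrt{a}+\sqrt{c}}\,\gamma^{-4j} \;-\; \frac{\sqrt{c}(\sqrt{b}-\sqrt{a})}{\sqrt{b}(\sqrt{a}+\sqrt{c})}\,\alpha^{-2h}\gamma^{-2j}. \tag{$\ast$}
\end{equation*}
Once positivity is known, the subtracted term on the right of $(\ast)$ is non-negative, hence
\begin{equation*}
0 < e^{\Lambda_1}-1 < \frac{\sqrt{c}-\sqrt{a}}{\sqrt{a}+\sqrt{c}}\,\gamma^{-4j} < \gamma^{-4j} = (s+\sqrt{ac})^{-4j},
\end{equation*}
and the standard estimate $\Lambda_1 < e^{\Lambda_1}-1$ for $\Lambda_1>0$ completes the upper bound.

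The heart of the proof is positivity. Assume for contradiction $\Lambda_1\leq 0$. By $(\ast)$ this forces $\alpha^{2h}/\gamma^{2j}\leq T$, where $T := \sqrt{c}(\sqrt{b}-\sqrt{a})/(\sqrt{b}(\sqrt{c}-\sqrt{a}))$. Setting $\mu := \sqrt{c}(\sqrt{a}+\sqrt{b})/(\sqrt{b}(\sqrt{a}+\sqrt{c}))$, so that $e^{\Lambda_1}=\mu\alpha^{2h}/\gamma^{2j}$, a straightforward algebraic calculation gives the exact identity
\begin{equation*}
\mu T \;=\; \frac{c(b-a)}{b(c-a)}, \qquad\text{so that}\qquad 1-\mu T \;=\; \frac{a(c-b)}{b(c-a)} \;>\; 0.
\end{equation*}
Therefore the assumption yields a fixed rational gap $1-e^{\Lambda_1}\geq a(c-b)/(b(c-a))$. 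On the other hand, the triangle inequality applied to $(\ast)$ (using $\sqrt{c}-\sqrt{a}<\sqrt{c}$, $\sqrt{b}-\sqrt{a}<\sqrt{b}$, $\sqrt{a}+\sqrt{c}>\sqrt{c}$, and $h,j\geq 1$) gives
\begin{equation*}
|e^{\Lambda_1}-1| \;\leq\; \gamma^{-4} + \alpha^{-2}\gamma^{-2} \;<\; \frac{2}{\alpha^{2}\gamma^{2}}.
\end{equation*}
With $\alpha^{2}>4ab$ and $\gamma^{2}>4ac$, the inequality $a(c-b)/(b(c-a))>2/(\alpha^{2}\gamma^{2})$ reduces to $8a^{3}c(c-b)>c-a$, which holds trivially (in view of $a\geq 1$, $c>b\geq 15$ by Lemma~\ref{lem:min_bcd}, and $c-b\geq 1$). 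This contradiction forces $\Lambda_1>0$.

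The main obstacle is the sign bookkeeping in $(\ast)$ combined with spotting the exact rational identity $\mu T = c(b-a)/(b(c-a))$: it is this identity that produces a definite positive gap $1-\mu T$ of arithmetic origin, which then overpowers the astronomically small analytic bound $2/(\alpha^{2}\gamma^{2})$ coming from the equation. Everything else is either a clean algebraic manipulation or a trivial size estimate.
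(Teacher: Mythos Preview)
Your proof is correct. Both you and the paper start from the same identity: writing $P=\frac{(\sqrt a+\sqrt b)}{\sqrt b}\alpha^{2h}$ and $Q=\frac{(\sqrt a+\sqrt c)}{\sqrt c}\gamma^{2j}$, the equation $X^{(a,b)}_{2h}=X^{(a,c)}_{2j}$ becomes $P+\frac{b-a}{b}P^{-1}=Q+\frac{c-a}{c}Q^{-1}$, and your $(\ast)$ is just this divided through by $Q$. The upper bound arguments are then essentially identical.

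The positivity arguments, however, differ. The paper observes directly that
\[
P-Q \;=\; \frac{c-a}{c}Q^{-1}-\frac{b-a}{b}P^{-1} \;>\; \frac{c-a}{c}\bigl(Q^{-1}-P^{-1}\bigr) \;=\; \frac{c-a}{c}\cdot\frac{P-Q}{PQ},
\]
so that if $P-Q<0$ one could divide and obtain $1<\frac{c-a}{c}\cdot\frac{1}{PQ}<1$, an immediate contradiction requiring no numerical input at all. Your route instead extracts the exact rational value $\mu T=\frac{c(b-a)}{b(c-a)}$, turns the hypothesis $\Lambda_1\le 0$ into a fixed gap $1-e^{\Lambda_1}\ge\frac{a(c-b)}{b(c-a)}$, and then beats this against the crude analytic bound $2/(\alpha^2\gamma^2)$, invoking $h,j\ge 1$ and the size estimates $\alpha^2>4ab$, $\gamma^2>4ac$. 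Both are valid; the paper's trick is shorter and avoids any arithmetic side conditions, while your argument makes the ``rational gap versus analytic smallness'' mechanism fully explicit.
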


\begin{proof}
We follow the ideas of Baker and Davenport \cite{Baker-Davenport:1969} (see also \cite[Lemma 5]{Dujella:2001}).  Let 
$$
P= \frac{(\sqrt{a} + \sqrt{b})(r+\sqrt{ab})^{2h}}{\sqrt{b}},\quad Q=  \frac{ ( \sqrt{a} + \sqrt{c})(s+\sqrt{ac})^{2j}}{\sqrt{c}}.
$$
Using the explicit formulas \eqref{eq:Xab} and substituting $P$ and $Q$ in the right way, we can rewrite the equation $X^{(a,b)}_{2h} = X^{(a,c)}_{2j}$ to 
$$
P+\frac{b-a}{b}P^{-1} = Q+\frac{c-a}{c}Q^{-1}. 
$$
This yields 
$$
P-Q=\frac{c-a}{c}Q^{-1}- \frac{b-a}{b}P^{-1} >\frac{c-a}{c}(Q^{-1}- P^{-1})=\frac{c-a}c\frac{P-Q}{PQ}.
$$
In the case that $P-Q<0$, the above inequality would yield $1<\frac{c-a}c \frac{1}{PQ}=(1-\frac{a}c) \frac{1}{PQ}<1$, which is
an obvious contradiction. Therefore, we have $P-Q>0$, hence $\Lambda_1 = \log\frac{P}{Q}>0$. 

On the other hand, we have 
$$
P-Q<\frac{c-a}{c}Q^{-1}<Q^{-1}
$$
and we obtain 
$$
\Lambda_1=\log \frac{P}{Q} < \log(1+Q^{-2})<Q^{-2}<(s+\sqrt{ac})^{-4j}. 
$$
\end{proof}

Now, we apply Theorem~\ref{thm:Matveev} to $\Lambda_1$ with 
\begin{gather*}
N=3,\quad D=4,\quad  b_1=2h,\quad b_2=-2j,\quad b_3=1,\\
\alpha_1= r+\sqrt{ab},\quad  \alpha_2 = s+\sqrt{ac},\quad \alpha_3=\frac{\sqrt{c}(\sqrt{a}+\sqrt{b})}{\sqrt{b}(\sqrt{a}+\sqrt{c})}.
\end{gather*}

Notice that $X^2-2rX+1=0$ is the minimal polynomial of $\alpha_1=r+\sqrt{ab}$ and
that $X^2-2sX+1=0$ is the minimal polynomial $\alpha_2=s+\sqrt{ac}$. Therefore, we get 
$$
 h(\alpha_1)=\frac{1}{2}\log\alpha_1,\quad h(\alpha_2)=\frac{1}{2}\log \alpha_2.
$$

Since the absolute values of the conjugates of $\alpha_3$ which are $\geq 1$ are 
$$
\frac{\sqrt{c}(\sqrt{a}+\sqrt{b})}{\sqrt{b}(\sqrt{a}+\sqrt{c})},\quad \frac{\sqrt{c}(\sqrt{a}+\sqrt{b})}{\sqrt{b}(-\sqrt{a}+\sqrt{c})},
$$
and since the minimal polynomial of $\alpha_3$ is
$$
b^2(c-a)^2X^4-4b^2c(c-a)X^3+2bc(3bc-a^2-ab-ac)X^2-4bc^2(b-a)X+c^2(b-a)^2,
$$
we obtain that 
$$
h(\alpha_3)=\frac{1}{4}\log\left(b^2(c-a)^2\cdot \frac{c}{b} \cdot \frac{(\sqrt{a}+\sqrt{b})^2}{c-a}\right)<\log c.
$$
Thus, we choose
$$
A_1 =2\log \alpha_1,\;\;A_2=2\log\alpha_2,\;\; A_3=4\log c.
$$

Next, we compute the quantity $E$. By the definition of $\Lambda_1$ and Lemma~\ref{lem:low-Lamb1}, we have $|b_3|A_3<|b_1|A_1<|b_2|A_2$.
Indeed since $\Lambda_1>0$, we deduce that $|b_1|A_1<|b_2|A_2$. As $4\log c<2j \log (2\sqrt{ac})$ and $j\geq 1$, we have $|b_3|A_3<|b_1|A_1$. Therefore, we get
$$
E=\frac{|b_2|A_2}{A_3}=\frac{j\log\alpha_2}{\log c}<\frac{h \log \alpha_3+\frac 12\log \alpha_1}{\log c}\leq h.
$$
The last inequality can be seen by showing that $\log\alpha_1+\frac 12\log\alpha_3\leq \log c$. Indeed, we have
\begin{align*}
(\sqrt{ab+1}&+\sqrt{ab}) \sqrt{\frac{\sqrt{c}(\sqrt{a}+\sqrt{b})}{\sqrt{b}(\sqrt{a}+\sqrt{c})}} \\
&= (\sqrt{ab+1}+\sqrt{ab}) \sqrt{1+\frac{\sqrt{ac}-\sqrt{ab}}{\sqrt{bc}+\sqrt{ab}}}\\
&\leq  (\sqrt{ab+1}+\sqrt{ab}) \sqrt{1+\sqrt{\frac ab}}\\
&\leq  (\sqrt{ab+1}+\sqrt{ab}) \left(1+\frac 12 \sqrt{\frac ab}\right) \qquad (\text{Bernoulli's inequality})\\
&=  \frac{a}2 + \frac{a}2 \sqrt{1+\frac{1}{ab}}+\sqrt{ab+1}+\sqrt{ab}\\
&\leq  \frac a2+\frac a2\left(1+ \frac{1}{2ab}\right)+\sqrt{ab+1}+\sqrt{ab}\qquad \qquad  \;\;\; (\text{Bernoulli's inequality})\\     
&\leq  a+b+2\sqrt{ab+1}\leq c.
\end{align*}

Before we may apply Theorem~\ref{thm:Matveev}, we also have to ensure that $\alpha_1$, $\alpha_2$ and $\alpha_3$ are multiplicatively independent.

\begin{lemma}\label{lem:mult_ind}
  With the notations above, the algebraic numbers $\alpha_1$, $\alpha_2$ and $\alpha_3$ are multiplicatively independent.
\end{lemma}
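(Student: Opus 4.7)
The plan is to work inside the quartic field $L:=\mathbb{Q}(\sqrt{ab},\sqrt{ac})$, in which all three of $\alpha_1,\alpha_2,\alpha_3$ will turn out to lie. For this, I will first rewrite $\alpha_3$ by clearing $\sqrt{a}$, $\sqrt{b}$, $\sqrt{c}$:
\[
\alpha_3 \;=\; \frac{\sqrt{ac}+\sqrt{bc}}{\sqrt{ab}+\sqrt{bc}},
\]
and then use $\sqrt{bc}=\sqrt{ab}\cdot\sqrt{ac}/a$ to conclude $\alpha_3\in L$. The degree $[L:\mathbb{Q}]=4$ follows because each of $ab=r^{2}-1$, $ac=s^{2}-1$ and $bc=t^{2}-1$ is one less than a positive perfect square and hence not itself a square; this makes $\sqrt{ab},\sqrt{ac}\notin\mathbb{Q}$ and precludes $\mathbb{Q}(\sqrt{ab})=\mathbb{Q}(\sqrt{ac})$ (the latter equality would force $bc$ to be a square).

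With this setup, I will assume toward a contradiction a relation $\alpha_1^{p}\alpha_2^{q}\alpha_3^{n}=1$ with integers $p,q,n$ not all zero, and apply $N_{L/\mathbb{Q}}$ to it. By transitivity of the norm and $[L:\mathbb{Q}(\sqrt{ab})]=2$, we get $N_{L/\mathbb{Q}}(\alpha_1)=N_{\mathbb{Q}(\sqrt{ab})/\mathbb{Q}}(\alpha_1)^{2}=(r^{2}-ab)^{2}=1$, and likewise $N_{L/\mathbb{Q}}(\alpha_2)=1$. On the other hand, the quartic minimal polynomial of $\alpha_3$ displayed just above the statement has leading coefficient $b^{2}(c-a)^{2}$ and constant term $c^{2}(b-a)^{2}$, so
\[
N_{L/\mathbb{Q}}(\alpha_3)\;=\;\left(\frac{c(b-a)}{b(c-a)}\right)^{2}.
\]
The norm identity therefore collapses to $(c(b-a)/(b(c-a)))^{2n}=1$, and since $a<b<c$ places this positive rational strictly inside $(0,1)$, we must have $n=0$.

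It remains to rule out a nontrivial relation $\alpha_1^{p}\alpha_2^{q}=1$. Because $\mathbb{Q}(\sqrt{ab})$ and $\mathbb{Q}(\sqrt{ac})$ are distinct quadratic fields, their intersection is $\mathbb{Q}$, so $\alpha_1^{p}=\alpha_2^{-q}\in\mathbb{Q}$. Writing $\alpha_1^{p}=A_{p}+B_{p}\sqrt{ab}$, the standard Pell recurrence $A_{p+1}=rA_{p}+abB_{p}$, $B_{p+1}=A_{p}+rB_{p}$ (with $A_{1}=r$, $B_{1}=1$) gives $B_{p}\ne 0$ for all $p\ne 0$, so $p=0$; then $\alpha_2^{q}=1$ and $\alpha_2>1$ together force $q=0$, completing the contradiction.

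The main obstacle is really the bookkeeping at the start: recognising that $\alpha_3$ already lies in the same quartic field as $\alpha_1$ and $\alpha_2$ (after the algebraic simplification above), and reading off its norm cleanly from the minimal polynomial. Once these two facts are in place, the rest of the argument is just the one-line norm identity above plus the elementary intersection of two distinct real quadratic subfields.
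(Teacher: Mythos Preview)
Your proof is correct and follows essentially the same route as the paper: both arguments work in $L=\mathbb{Q}(\sqrt{ab},\sqrt{ac})$, observe that $\alpha_1,\alpha_2$ are units there while $N_{L/\mathbb{Q}}(\alpha_3)=c^2(b-a)^2/(b^2(c-a)^2)\neq 1$, so any multiplicative relation forces the exponent of $\alpha_3$ to vanish, and then finish using that $\alpha_1,\alpha_2$ lie in distinct real quadratic subfields. Your write-up is simply more explicit than the paper's (you spell out why $\alpha_3\in L$, why $[L:\mathbb{Q}]=4$, and the final step for $\alpha_1^{p}\alpha_2^{q}=1$), but the underlying idea is identical.
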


\begin{proof}
 First, we note that $\alpha_1$ and $\alpha_2$ are units in the fields $\QQ(\sqrt{ab})$ and $\QQ(\sqrt{ac})$, respectively.
 Since $a^2bc$ is not a perfect square, these two fields are two distinct extensions of $\QQ$ hence $\alpha_1$ and $\alpha_2$ are multiplicatively independent.
 Furthermore, computing the norm of $\alpha_3$, we obtain
 $$\mathrm{N}_{\mathbb{K}/\QQ}(\alpha_3)=\frac{c^2(b-a)^2}{b^2(c-a)^2}\neq \pm 1,$$
 where $\mathbb{K}=\QQ(\sqrt{ab},\sqrt{ac})$.
 Hence $\alpha_3$ is not a unit and therefore $\alpha_1$, $\alpha_2$ and $\alpha_3$ are indeed multiplicatively independent.
\end{proof}

Now by an application of Theorem~\ref{thm:Matveev}, we have
   \begin{equation}\label{eq:Matveev-lowerbound}
    \log|\Lambda|> -  4.928\cdot 10^{12}\cdot \log\left(38.92h\right)\cdot  \log\alpha_1 \cdot \log\alpha_2 \cdot \log c.
\end{equation}
Combining inequality \eqref{eq:Matveev-lowerbound} with Lemma~\ref{lem:low-Lamb1}, we obtain that
$$
4h\log\alpha_1<4j\log\alpha_2< 4.928\cdot 10^{12}\cdot \log\left(38.92h\right)\cdot  \log\alpha_1 \cdot \log\alpha_2 \cdot \log c.
$$
Therefore, we obtain the inequality  
$$
\frac{h}{\log(38.92 h)}< 1.232\cdot 10^{12}\cdot \log\alpha_2 \cdot \log c.
$$
Since $\alpha_2=s+\sqrt{ac+1}<2\sqrt{ac+1}$, we get
\begin{equation}\label{eq:nd}
\frac{h}{\log(38.92h)}< 1.232\cdot 10^{12}\cdot \log (2\sqrt{ac+1})\cdot \log c.
\end{equation}
By Lemma~\ref{lem:hbd}, we have $h>6.2462\sqrt{ac}$. Moreover, $\frac{h}{\log(38.92h)}$ is an increasing function if $h\geq 1$ and we deduce that  
\begin{equation}
\sqrt{ac}<1.98\cdot 10^{11}\cdot \log (2\sqrt{ac+1})\cdot \log c\cdot\log (243.11\sqrt{ac}).
\end{equation}
A straightforward computation gives $ac<6.18\cdot 10^{32}$. Inserting this into \eqref{eq:nd} we get
$$\frac{h}{\log(38.92h)}< 3.577\cdot 10^{15},$$
which yields $h<1.55\cdot 10^{17}$. 
Moreover, we deduce that $d<4abc+4c<4(ac)^2+4ac<1.53\cdot 10^{66}$. Summarizing these results yields

\begin{proposition}\label{pro:1}
  Suppose that $\{a,b,c,d,e\}$ is a Diophantine quintuple with $a < b < c < d < e$.
  Then we have $ac<6.18\cdot 10^{32}$, $d<1.53\cdot 10^{66}$ and $h<1.55\cdot 10^{17}$.  
\end{proposition}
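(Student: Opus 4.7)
The plan is to combine the two-sided estimates on the linear form $\Lambda_1$ from \eqref{eq:Lamb1}: the upper bound already contained in Lemma~\ref{lem:low-Lamb1}, namely $\Lambda_1<(s+\sqrt{ac})^{-4j}$, together with a lower bound from Matveev's theorem (Theorem~\ref{thm:Matveev}). Equating these yields an inequality of the form ``$h/\log h$ is small relative to $\log\alpha_2\cdot\log c$,'' and the gap principle $h>6.2462\sqrt{ac}$ from Lemma~\ref{lem:hbd} then turns this into an absolute bound on $ac$, from which bounds on $h$ and $d$ follow.

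Concretely, I would apply Matveev's theorem with $N=3$, degree $D=4$, coefficients $b_1=2h$, $b_2=-2j$, $b_3=1$, and algebraic numbers $\alpha_1=r+\sqrt{ab}$, $\alpha_2=s+\sqrt{ac}$, $\alpha_3=\sqrt{c}(\sqrt{a}+\sqrt{b})/(\sqrt{b}(\sqrt{a}+\sqrt{c}))$. The multiplicative independence of these three numbers must be checked: $\alpha_1,\alpha_2$ are units in distinct real quadratic fields $\QQ(\sqrt{ab})\neq\QQ(\sqrt{ac})$ (since $a^2bc$ is not a perfect square in the quintuple setting), while a norm computation shows $\mathrm{N}_{\QQ(\sqrt{ab},\sqrt{ac})/\QQ}(\alpha_3)=c^2(b-a)^2/(b^2(c-a)^2)\neq\pm 1$, so $\alpha_3$ is not a unit and lies outside the group generated by $\alpha_1,\alpha_2$. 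The heights are straightforward: $h(\alpha_1)=\tfrac12\log\alpha_1$, $h(\alpha_2)=\tfrac12\log\alpha_2$, and for $\alpha_3$ one writes down the minimal polynomial and estimates to obtain $h(\alpha_3)<\log c$. Thus one may take $A_1=2\log\alpha_1$, $A_2=2\log\alpha_2$, $A_3=4\log c$.

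The slightly delicate step is to justify $E\leq h$. Since $\Lambda_1>0$ the dominant terms in \eqref{eq:Lamb1} must nearly cancel, which gives $h\log\alpha_1\approx j\log\alpha_2$ and hence $|b_1|A_1<|b_2|A_2$; together with the obvious $|b_3|A_3<|b_1|A_1$ (using $j\geq 1$ and $4\log c<2\log(2\sqrt{ac})$), one gets $E=|b_2|A_2/A_3$. A short computation bounding $\log\alpha_1+\tfrac12\log\alpha_3\leq\log c$ via Bernoulli's inequality (using $b>3a$ and $c\geq a+b+2r$) then yields $E\leq h$. Feeding these data into Theorem~\ref{thm:Matveev} gives
\[
\log|\Lambda_1|>-4.928\cdot 10^{12}\log(38.92h)\cdot\log\alpha_1\cdot\log\alpha_2\cdot\log c.
\]

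Combined with $\log\Lambda_1<-4j\log\alpha_2$ and the identity $j\log\alpha_2>h\log\alpha_1$ (from positivity of $\Lambda_1$), the $\log\alpha_1$ factor cancels and one obtains $h/\log(38.92h)<1.232\cdot 10^{12}\log(2\sqrt{ac+1})\log c$. Inserting $h>6.2462\sqrt{ac}$ from Lemma~\ref{lem:hbd} produces an inequality in $ac$ alone that, solved numerically, gives $ac<6.18\cdot 10^{32}$. Plugging this back yields $h<1.55\cdot 10^{17}$, and finally Lemma~\ref{lem:d+ieq} together with $b<c$ gives $d<4abc+4c<4(ac)^2+4ac<1.53\cdot 10^{66}$. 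The main obstacle is the bookkeeping in the height/$E$ computation for the irrational $\alpha_3$, where the Bernoulli estimates must be done carefully enough to keep $E\leq h$ without losing a factor that would weaken the final bounds.
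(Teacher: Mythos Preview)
Your proposal is correct and follows essentially the same approach as the paper: apply Matveev's theorem to $\Lambda_1$ with the same choice of $\alpha_i$, $A_i$, and $E\le h$ (justified via the same Bernoulli-type estimate $\log\alpha_1+\tfrac12\log\alpha_3\le\log c$ and the multiplicative-independence argument via norms), combine with Lemma~\ref{lem:low-Lamb1} to obtain $h/\log(38.92h)<1.232\cdot 10^{12}\log(2\sqrt{ac+1})\log c$, and then feed in $h>6.2462\sqrt{ac}$ from Lemma~\ref{lem:hbd}. One small slip: the inequality you wrote as ``$4\log c<2\log(2\sqrt{ac})$'' is false as stated; what is actually needed (and what the paper uses) is $|b_3|A_3=4\log c<4j\log\alpha_2=|b_2|A_2$, which holds since $j\ge 1$ and $\alpha_2^2>4ac>c$.
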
  

This upper bound for $d$ (and also for $h$) is a slight improvement of that obtained by Wu and the first author~\cite{Wu-He:2014}
and it is a little weaker than the bounds obtained in some special cases obtained by Cipu and Trudgian~\cite{Cipu-Trudgian}.

In order to get a sharper bound, we use this bound together with a powerful tool due to Mignotte \cite{Mignotte:kit}.
In fact, some slightly different versions  of the following theorem were used in some papers, cf. Theorem 12.9 of \cite{BMS1},  Theorem 3 of \cite{BMS2}. 
We use the statement of Proposition 5.1 of \cite{Mignotte:kit}. One can refer to the results in Section 12 of \cite{BMS1} and get 
there the details of the proof. A slightly modified version is Proposition 3.3 in \cite{Bennett-Pinter:2006}. 

\begin{theorem}\label{thm:kit}
We consider three non-zero algebraic numbers $\alpha_1, \alpha_2$ and $\alpha_3$, which are either all real and $> 1$ or all
complex of modulus one and all $\neq 1$. Moreover, we assume that either the three numbers $\alpha_1, \alpha_2$ and $\alpha_3$
are multiplicatively independent, or two of these numbers are multiplicatively independent and the third one is a root of unity. Put
$$
\mathcal{D} = [\QQ(\alpha_1, \alpha_2,\alpha_3): \QQ]/[\RR(\alpha_1, \alpha_2,\alpha_3): \RR].
$$

We also consider three positive coprime rational integers $b_1, b_2, b_3$, and the linear form
$$
\Lambda=b_2\log\alpha_2 - b_1\log\alpha_1  - b_3\log\alpha_3,
$$
where the logarithms of the $\alpha_i$ are arbitrary determinations of the logarithm, but which are all real or all purely imaginary.

And we assume also that
$$
b_2|\log\alpha_2| =b_1|\log\alpha_1|  + b_3|\log\alpha_3| \pm |\Lambda|. 
$$
We put
$$
d_1=\gcd(b_1,b_2), \quad d_3=\gcd(b_3,b_2),\quad b_1=d_1b_1',\quad b_2=d_1b_2'=d_3b_2'',\quad b_3=d_3b_3''. 
$$
Let $\rho\ge \exp(1)$ be a real number. Put $\lambda=\log \rho$. Let $a_1$, $a_2$ and $a_3$ be real numbers such that 
$$
a_i\ge \rho|\log \alpha_i| - \log|\alpha_i| + 2\mathcal{D}h(\alpha_i), \quad i=1,2,3,
$$
and assume further that 
$$
\Omega:=a_1a_2a_3\ge 2.5 \quad \mbox{and} \quad A:=\min\{a_1,a_2,a_3\}\ge 0.62.
$$
Let $K$, $L$, and $M$ be positive integers with 
$$
L\ge 4+ \mathcal{D}, \quad K= \lfloor M\Omega L \rfloor, \quad \mbox{where} \,\, M\ge 3. 
$$
Let $\chi>0$ be fixed and $\le 2$. Define 
\begin{align*}
c_1& = \max\left\{(\chi M L)^{2/3}, \sqrt{2ML/A}  \right\},\\
c_2& = \max\left\{2^{1/3}(M L)^{2/3}, \sqrt{M/A}L  \right\},\\
c_3& =(6M^2)^{1/3}L, 
\end{align*}
and then put 
\begin{align*}
R_1&=\lfloor c_1a_2a_3 \rfloor,& S_1&= \lfloor c_1a_1a_3 \rfloor,& T_1&=\lfloor c_1a_1a_2 \rfloor,\\
R_2&=\lfloor c_2a_2a_3 \rfloor,& S_2&= \lfloor c_2a_1a_3 \rfloor,& T_2&=\lfloor c_2a_1a_2 \rfloor,\\ 
R_3&=\lfloor c_3a_2a_3 \rfloor,& S_3&= \lfloor c_3a_1a_3 \rfloor,& T_3&=\lfloor c_3a_1a_2 \rfloor.
\end{align*}
Let also 
$$
R=R_1+R_2+R_3+1, \quad S=S_1+S_2+S_3+1, \quad  T=T_1+T_2+T_3+1.
$$
Define 
$$
c_0=\max\left\{\frac{R}{La_2a_3}, \frac{S}{La_1a_3},\frac{T}{La_1a_2}\right\}. 
$$
Finally, assume that 
\begin{equation}\label{eq:condition1}
\begin{split}
\left(\frac{KL}{2}\right. &\left. + \frac{L}{4}-1 - \frac{2K}{3L}\right)\lambda + 2 \mathcal{D} \log 1.36 \\
&\ge (\mathcal{D}+1) \log L + 3gL^2 c_0 \Omega +\mathcal{D}(K-1)\log\tilde{b}+2\log K,
\end{split}
\end{equation}
where 
$$
g=\frac{1}{4} - \frac{K^2L}{12RST},\quad b'=\left(\frac{b_1'}{a_2}+\frac{b_2'}{a_1}\right)\left(\frac{b_3''}{a_2}+\frac{b_2''}{a_3}\right),
\quad \tilde{b}= \frac{\exp(3)c_0^2\Omega^2 L^2}{4K^2} \times b'.
$$

Then \textbf{either} 
$$
\log |\Lambda| > - (KL + \log (3K L))\lambda, 
$$
-\textbf{or} \textbf{(A1)}: there exist two non-zero rational integers $r_0$ and $s_0$ such that 
$$
r_0b_2 = s_0 b_1
$$
with 
$$
|r_0|\le \frac{(R_1+1)(T_1+1)}{\mathcal{M}-T_1}\quad \mbox{and} \quad |s_0|\le \frac{(S_1+1)(T_1+1)}{\mathcal{M}-T_1},
$$
where 
\begin{align*}
\mathcal{M} &= \max\{R_1+S_1+1, S_1+T_1+1, R_1+T_1+1, \chi \mathcal{V}\},\\
\mathcal{V} &=\sqrt{(R_1+1)(S_1+1)(T_1+1)},
\end{align*}
\textbf{or} \textbf{(A2)}: there exist rational integers $r_1$, $s_1$, $t_1$, and $t_2$, with $r_1s_1\neq 0$ such that 
$$
(t_1b_1+r_1b_3)s_1 = r_1b_2t_2,\quad \gcd(r_1,t_1) = \gcd(s_1,t_2)=1,
$$
which also satisfy 
\begin{align*}
|r_1s_1| &\le \delta \cdot \frac{(R_1+1)(S_1+1)}{\mathcal{M}-\max\{R_1,S_1\}},\\
|s_1t_1| &\le \delta \cdot \frac{(S_1+1)(T_1+1)}{\mathcal{M}-\max\{S_1,T_1\}},\\ 
|r_1t_2| &\le \delta \cdot \frac{(R_1+1)(T_1+1)}{\mathcal{M}-\max\{R_1,T_1\}},
\end{align*}
where $\delta = \gcd(r_1,s_1)$. Moreover, when $t_1=0$ we can take $r_1=1$, and when $t_2=0$ we can take $s_1=1$. 
\end{theorem}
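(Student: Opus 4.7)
The plan is to follow Mignotte's refinement of the Laurent interpolation-determinant method for linear forms in three logarithms, as developed in \cite{Mignotte:kit,BMS1,BMS2}. The overall strategy is the classical dichotomy in transcendence theory: assume, for contradiction, that $\log|\Lambda|$ is strictly smaller than the bound $-(KL+\log(3KL))\lambda$ in the conclusion, and then derive either a contradiction or one of the two exceptional cases (A1), (A2).

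First I would set up the auxiliary interpolation data. Let $K$, $L$, $R=R_1+R_2+R_3+1$, $S=S_1+S_2+S_3+1$, $T=T_1+T_2+T_3+1$ be as defined, and consider the $KL\times RST$ interpolation matrix whose rows are indexed by pairs $(k,\ell)$ with $0\le k<K$, $0\le \ell<L$, whose columns are indexed by triples $(r,s,t)$ in suitable ranges, and whose entries are of the shape
\begin{equation*}
\bigl(rb_2+sb_1\bigr)^k\bigl(rb_2+tb_3\bigr)^{\ell}\,\alpha_1^{r\ell}\alpha_2^{s\ell}\alpha_3^{t\ell}
\end{equation*}
(or an analogous Laurent-Mignotte variant). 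Using the hypothesis that $\Lambda$ is tiny, the analytic step replaces each occurrence of $\log\alpha_2$ inside $b_2\log\alpha_2$ by $(b_1\log\alpha_1+b_3\log\alpha_3\pm\Lambda)/b_2$, so that entries become extremely close to products of powers of $\alpha_1$ and $\alpha_3$ alone. This reduces the effective rank of the matrix and forces many minors to be analytically small.

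Next I would extract a non-vanishing maximal minor $\Delta$. The heart of the argument is the standard Liouville-type comparison: on one hand, $\Delta$ is a non-zero algebraic integer (up to a bounded denominator), so by the product-formula lower bound its absolute value is controlled from below by the heights $h(\alpha_i)$ and the sizes $a_i$; on the other hand, by the analytic replacement above combined with Schwarz-type estimates on the auxiliary polynomial associated with the matrix, $|\Delta|$ is at most roughly $\exp(KL\lambda)\cdot|\Lambda|\cdot(\text{height terms})$. The carefully chosen constants $c_1,c_2,c_3$ and the parameters $M$, $\chi$, $\rho$ are precisely tuned so that hypothesis \eqref{eq:condition1} makes the arithmetic lower bound strictly exceed the analytic upper bound, producing a contradiction.

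The hardest step, and the one that generates the exceptional cases (A1) and (A2), is ensuring $\Delta\neq 0$. This is done by a Laurent-style zero lemma: if every such $\Delta$ vanishes, then the columns of the interpolation matrix satisfy a non-trivial linear relation, which translates (via the exponents appearing in $\alpha_1^{r\ell}\alpha_2^{s\ell}\alpha_3^{t\ell}$ and the polynomial factors $(rb_2+sb_1)^k(rb_2+tb_3)^\ell$) into an integer relation among $b_1,b_2,b_3$ of a very restricted shape. Tracing through the possible shapes of this relation yields exactly the two alternatives described: either a two-term relation $r_0b_2=s_0b_1$ with small coefficients, giving (A1), or a three-term relation $(t_1b_1+r_1b_3)s_1=r_1b_2t_2$ with controlled coefficients, giving (A2). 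I expect this zero-lemma bookkeeping to be the main obstacle, as it requires careful combinatorial control of the admissible triples $(r,s,t)$, and the sharpness of the bounds on $|r_0|,|s_0|,|r_1s_1|,|s_1t_1|,|r_1t_2|$ comes directly from counting how many columns can be involved in the minimal vanishing relation — this is where the quantities $\mathcal{M}$ and $\mathcal{V}$ arise. All remaining work is then a routine verification that the chosen $c_0,g,\tilde b$ indeed satisfy the numerical inequalities required by the analytic-arithmetic comparison.
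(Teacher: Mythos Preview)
Your sketch is a reasonable high-level outline of the Laurent--Mignotte interpolation-determinant method, but you should be aware that the paper does \emph{not} prove this theorem at all: it is quoted verbatim as Proposition~5.1 of Mignotte's preprint \cite{Mignotte:kit}, with the reader referred to Section~12 of \cite{BMS1} for details. So there is no ``paper's own proof'' to compare against; the authors treat the result as a black box and spend their effort on \emph{applying} it with carefully chosen parameters $\rho$, $L$, $M$, $\chi$ to their specific linear form $\Lambda_1$.

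That said, as an outline of what happens in the cited references your description is broadly accurate: one builds an interpolation matrix, compares an arithmetic lower bound for a nonzero minor against an analytic upper bound coming from the smallness of $\Lambda$, and the zero-lemma step is indeed where the exceptional alternatives (A1) and (A2) emerge. If you were actually writing out the proof you would need to be considerably more precise about the shape of the matrix entries (the version you wrote down is schematic and not quite the one Mignotte uses), about the role of the three-stage construction reflected in the subscripts of $R_i,S_i,T_i$, and about how the parameter $\chi$ enters the zero-lemma counting that produces $\mathcal{M}=\max\{\ldots,\chi\mathcal{V}\}$. But for the purposes of this paper none of that is needed.
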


\begin{remark}
The cases \textbf{(A1)} and \textbf{(A1)} represent the case \textbf{(C3)} of Theorem 2 in \cite{Mignotte:kit}.
\end{remark}

We aim to apply Theorem~\ref{thm:kit} to 
$$
\Lambda:=-\Lambda_1 = 2j \log(s+\sqrt{ac}) -2h \log(r+\sqrt{ab})-  \log \left(\frac{\sqrt{c}(\sqrt{a}+\sqrt{b})}{\sqrt{b}(\sqrt{a}+\sqrt{c})}\right).
$$
Therefore, we go through the theorem step by step. First, let us assume for technical reasons that $c>2\cdot 10^{8}$. As in the previous case we
take the parameters
\begin{gather*}
 \mathcal{D}=4,\quad  b_1=2h,\quad b_2=2j,\quad b_3=1,\\
\alpha_1= r+\sqrt{ab},\quad  \alpha_2 = s+\sqrt{ac}, \quad \alpha_3= \frac{\sqrt{c}(\sqrt{a}+\sqrt{b})}{\sqrt{b}(\sqrt{a}+\sqrt{c})} .
\end{gather*}
As already shown during the proof of Proposition \ref{pro:1}, we have
$$
h(\alpha_1) = \frac{1}{2}\log \alpha_1, \quad h(\alpha_2) = \frac{1}{2}\log \alpha_2, \quad  h(\alpha_3)<\log c.
$$ 
Moreover, let us note that
\begin{align*}
\log \alpha_3 &= \log \left(1+\frac{\sqrt{a}(\sqrt{c}-\sqrt{b})}{\sqrt{b}(\sqrt{a}+\sqrt{c})}\right)\\
&<\log\left(1+\sqrt{\frac{a}{b}}\right)<\log\left(1+\sqrt{\frac 13}\right)<0.46.  
\end{align*}
In view of this inequality, we set
$$
a_1 = (\rho+3)\log \alpha_1, \quad a_2 =(\rho+3)\log \alpha_2,\quad  a_3= 0.46(\rho-1)+8\log c.
$$
We make the following choices for our parameters:
$$\chi=2,\quad L=625,\quad M=12.1, \quad \rho=10.$$
These choices together with the assumption that $c\geq 10^6$ imply
$$A=\min\{a_1,a_2,a_3\}>13\log(\sqrt c)>89.8$$
and therefore, we obtain
$$c_1=611.59452\dots,\quad c_2=485.42289\dots,\quad c_3=5985.77903\dots\, .$$
With these values, we are able to compute
\begin{align*}
R_1 &= \lfloor c_1a_2a_3\rfloor \simeq \phantom{00}63605.83059\log\alpha_2(\log c+0.5175),\\ 
R_2 &= \lfloor c_2a_2a_3\rfloor \simeq \phantom{00}50483.98119\log\alpha_2(\log c+0.5175),\\ 
R_3 &= \lfloor c_3a_2a_3\rfloor \simeq \phantom{0}622521.01991\log\alpha_2(\log c+0.5175),\\ 
S_1 &= \lfloor c_1a_1a_3\rfloor \simeq \phantom{00}63605.83059\log\alpha_1(\log c+0.5175),\\ 
S_1 &= \lfloor c_2a_1a_3\rfloor \simeq \phantom{00}50483.98119\log\alpha_1(\log c+0.5175),\\ 
S_1 &= \lfloor c_3a_1a_3\rfloor \simeq \phantom{0}622521.01991\log\alpha_1(\log c+0.5175),\\ 
T_1 &= \lfloor c_1a_1a_2\rfloor \simeq \phantom{0}103359.47470\log\alpha_1\log\alpha_2,\\ 
T_2 &= \lfloor c_2a_1a_2\rfloor \simeq \phantom{00}82036.46944\log\alpha_1\log\alpha_2,\\ 
T_3 &= \lfloor c_3a_1a_2\rfloor \simeq 1011596.65736\log\alpha_1\log\alpha_2,
\end{align*}
where $R_1\simeq 63605.83059\log\alpha_2(\log c+0.5175) $ should be read as
$$63605.83059\log\alpha_2(\log c+0.5175)-1<R_1<63605.8306\log\alpha_2(\log c+0.5175)$$
and so on.

Next, we want to find an upper bound for $c_0$. Therefore, we note that we have
\begin{align*}
\frac{R}{La_2a_3}&=\frac{R_1+R_2+R_3+1}{La_2a_3}\\
& \leq \frac{c_1 a_2a_3+c_2a_2a_3+c_3a_2a_3+1}{La_2a_3}\\
& <\frac{c_1+c_2+c_3+1}L<11.3341.
\end{align*}
Since similar estimates hold for $\frac{S}{La_1a_3}$ and $\frac{T}{La_1a_2}$, we obtain
$$c_0<11.3341.$$

With the above choices, we have
\begin{align*}
\Omega &= 1352\log \alpha_1\log \alpha_2 (\log c +0.5175),\\
K &\simeq 10224500\log \alpha_1\log \alpha_2 (\log c +0.5175),
\end{align*}
where we interpret $K \simeq 10224500\log \alpha_1\log \alpha_2 (\log c +0.5175)$ as above.

Our next task is to show that inequality \eqref{eq:condition1} is satisfied. Therefore, we split up the inequality into four parts.
One part is representing the left hand side of inequality \eqref{eq:condition1} and the other three parts represent $(\mathcal{D}+1) \log L+ 2\log K$,
$3gL^2c_0\Omega$ and $\mathcal{D}(K-1)\log \tilde{b}$ respectively.
\begin{enumerate}[(i)]
 \item As $M\Omega L-1<K\le M\Omega L$, we have 
 \begin{multline*}
\left(\frac{KL}{2}+ \frac{L}{4}-1 - \frac{2K}{3L}\right)\lambda + 2 \mathcal{D} \log 1.36 \\
>7.357094\cdot 10^{9} \log \alpha_1\log \alpha_2 \log c +3.807296\cdot 10^9\log \alpha_1\log \alpha_2 -359.7. 
\end{multline*}

\item Using the upper bound for $K$, we obtain
$$(\mathcal{D}+1) \log L+ 2\log K< 87.73.$$

\item Using the explicit formula for $\Omega$ and noting that $g>\frac 14$, we get
$$3gL^2c_0\Omega < \frac{3}{4}L^2c_0\Omega < 4.4894\cdot 10^9 \log \alpha_1\log \alpha_2 \log c +2.3233\cdot 10^9 \log \alpha_1\log \alpha_2.$$

\item For the last part we start by estimating $b'$. First, let us note that since $\frac{j}{\log \alpha_1}<\frac{h+1}{\log \alpha_2}$, 
we have $\frac{b_2}{a_1}<\frac{2h+2}{a_2}$. Moreover, we have that $2\log\alpha_2>\log c$, hence  $\frac{b_3}{a_2}<\frac{2}{13 \log c}<\frac{2}{a_3}$.
Also note that $j\leq h$. Finally, let us note that by Proposition~\ref{pro:1}, we have that $h \le 1.55\cdot 10^{17}$. Therefore, we get
$$
 b'\le \left(\frac{b_1}{a_2}+\frac{b_2}{a_1}\right)\left(\frac{b_3}{a_2}+\frac{b_2}{a_3}\right)
 <\frac{(4h+2)(2h+2)}{104\log\alpha_2\log c}< 6.324\cdot 10^{29}.  
$$
Thus, we get 
$$
\log \tilde{b} < \log\left(\frac{6.324\cdot 10^{29}\, e^3\, c_0^2\Omega^2 L^2}{4K^2}\right)<70.1024,
$$
which establishes the fourth and last part of inequality \eqref{eq:condition1}: 
\begin{align*}
\mathcal{D}(K-1)\log \tilde{b}& <4M\Omega L \log \tilde{b}\\
& <2.8671\cdot 10^9\log \alpha_1 \log \alpha_2 \log c +1.4837\cdot 10^9\log \alpha_1 \log \alpha_2.
\end{align*}
\end{enumerate}
Combining (i)--(iv), we can now easily verify that condition \eqref{eq:condition1} is satisfied.

According to Theorem \ref{thm:kit}, we either obtain a lower bound for $|\log \Lambda_1|$ or one of the
additional cases \textbf{(A1)} and \textbf{(A2)} holds. First, let us consider the lower bound for $\log |\Lambda_1|$, which
is according to Theorem \ref{thm:kit} 
\begin{align*}
\log|-\Lambda_1|&>- (KL + \log (3K L))\lambda>-(ML^2 \Omega + \log (3ML^2 \Omega))\log \rho\\
&>-1.52656\cdot 10^{10} \log \alpha_1\log \alpha_2 \log c.
\end{align*}
On the other hand, Lemma~\ref{lem:low-Lamb1} implies that $\log|-\Lambda_1|<-4j\log \alpha_2$. Also note that
$h\log \alpha_1< j\log \alpha_2$, hence 
\begin{equation}\label{eq:hh}
h<3.8164\cdot 10^{9}\log \alpha_2 \log c.
\end{equation}

Before explicitly solving inequality \eqref{eq:hh}, we discuss the other two options \textbf{(A1)}
and \textbf{(A2)} of Theorem~\ref{thm:kit}. We start by computing $\mathcal{M}$. As we choose $\chi =2$, we get 
$$
\mathcal{M} = \chi \mathcal{V} = 2\mathcal{V}=2\sqrt{(R_1+1)(S_1+1)(T_1+1)}.
$$

First, we consider option $\textbf{(A2)}$ and compute
\begin{align*}
B_1 &:= \frac{(R_1+1)(S_1+1)}{\mathcal{M}-\max\{R_1,S_1\}},\\
B_2 &:= \frac{(S_1+1)(T_1+1)}{\mathcal{M}-\max\{S_1,T_1\}},\\
B_3 &:= \frac{(R_1+1)(T_1+1)}{\mathcal{M}-\max\{R_1,T_1\}}. 
\end{align*}
By our assumptions and Proposition~\ref{pro:1}, we have $2\cdot 10^{8}<ac<6.18\cdot 10^{32}$. Moreover, 
Lemma~\ref{lem:acb} implies that $b>c^{1/3}$. Thus, we get
$$
29.93 \le 13\log \alpha_1=a_1<a_2 =13\log\alpha_2,\qquad 114.66\le a_3 = 8(\log c+ 0.5175). 
$$
Finally, recall that
$$
c_1a_2a_3-1<R_1\le c_1a_2a_3,\quad c_1a_1a_3-1<S_1\le c_1a_1a_3,\quad c_1a_1a_2-1<T_1\le c_1a_1a_2,
$$
which imply that
$$
\mathcal{M}>2c_1^{3/2}a_1a_2a_3.
$$
Therefore, we obtain the following upper bounds:
\begin{align*}
B_1 &\le \frac{(c_1a_2a_3+1)(c_1a_1a_3+1)}{2c_1^{3/2}a_1a_2a_3-c_1a_2a_3}\\
& = \left(0.5c_1^{1/2}a_3+\frac{1}{2c_1^{1/2}a_1}\right)\cdot \frac{1+\frac{1}{c_1a_2a_3}}{1-\frac{1}{2c_1^{1/2}a_1}}\le 102.734\log c,\\
B_2 &\le \frac{(c_1a_1a_3+1)(c_1a_1a_2+1)}{2c_1^{3/2}a_1a_2a_3-c_1a_1a_3}\\
&= \left(0.5c_1^{1/2}a_1+\frac{1}{2c_1^{1/2}a_2}\right)\cdot \frac{1+\frac{1}{c_1a_1a_3}}{1-\frac{1}{2c_1^{1/2}a_2}}\\
& < \left(0.5c_1^{1/2}a_2+\frac{1}{2c_1^{1/2}a_2}\right)\cdot \frac{1+\frac{1}{c_1a_1a_3}}{1-\frac{1}{2c_1^{1/2}a_2}}\le 160.814\log\alpha_2,\\
B_3 &\le \frac{(c_1a_2a_3+1)(c_1a_1a_2+1)}{2c_1^{3/2}a_1a_2a_3-c_1a_2a_3}\\
& = \left(0.5c_1^{1/2}a_2+\frac{1}{2c_1^{1/2}a_1}\right)\cdot \frac{1+\frac{1}{c_1a_2a_3}}{1-\frac{1}{2c_1^{1/2}a_1}}\le 160.915\log\alpha_2.
\end{align*}
Since we assume that condition \textbf{(A2)} holds, there exist rational integers $r_1$, $s_1$, $t_1$, and $t_2$, with $r_1s_1\neq 0$ such that 
$$
(t_1b_1+r_1b_3)s_1 = r_1b_2t_2,\quad \gcd(r_1,t_1) = \gcd(s_1,t_2)=1,
$$
with
$$
|r_1s_1| \le \delta B_1, \quad |s_1t_1| \le \delta  B_2, \quad  |r_1t_2| \le \delta  B_3, \quad \delta = \gcd(r_1,s_1).$$
Put $r_1=\delta r_1'$ and $s_1 = \delta s_1'$. As $b_1= 2h$, $b_2 = 2j$, $b_3=1$ option \textbf{(A2)} is
$$
s_1't_1\cdot 2h + \delta r_1's_1'  = r_1' t_2 \cdot 2j,
$$
with
$$
|\delta r_1's_1'| \le  B_1, \quad |s_1't_1| \le  B_2, \quad  |r_1't_2| \le  B_3. 
$$
Multiplying $\Lambda_1$ by $r_1' t_2$, we obtain the following linear form 
\begin{equation}\label{eq:twologform}
r_1't_2 \Lambda_1 = 2h\log(\alpha_1^{r_1't_2}\cdot \alpha_2^{-s_1't_1}) - \log\left(\alpha_2^{\delta r_1's_1'} \cdot\alpha_3^{-r_1't_2}\right). 
\end{equation}
Thus, option \textbf{(A2)} yields a new linear form in two logarithms. To find a good lower bound for the new linear form \eqref{eq:twologform},
we apply a result due to Laurent \cite{Laurent:2008}. 

\begin{theorem}[Theorem 2 of \cite{Laurent:2008}]\label{thm:L1}
Let $a_1',a_2',h',\varrho$, and $\mu$ be real numbers with $\varrho>1$ and $1/3\le \mu \le 1$. Set
\begin{gather*}
\sigma = \frac{1+2\mu-\mu^2}{2},\quad \lambda' = \sigma \log \varrho,\quad H= \frac{h'}{\lambda'} + \frac{1}{\sigma},\\
\omega=2\left(1+\sqrt{1+\frac{1}{4H^2}}\right),\quad \theta = \sqrt{1+\frac{1}{4H^2}}+\frac{1}{2H}.
\end{gather*}
Consider the linear form 
$$
\Lambda = b_2 \log \gamma_2- b_1 \log\gamma_1,
$$
where $b_1$ and $b_2$ are positive integers. Suppose that $\gamma_1$ and $\gamma_2$ are multiplicatively independent.
Put $D=[\QQ(\gamma_1,\gamma_2):\QQ]/[\RR(\gamma_1,\gamma_2):\RR]$, and assume that 
\begin{align*}
h' &\ge\max\left\{D\left(\log\left(\frac{b_1}{a'_2}+\frac{b_2}{a'_1}\right)+\log\lambda' +1.75\right)+0.06,\lambda',\frac{D\log 2}{2}\right\}, \\ 
a_i'&\ge \max\{1,\varrho|\log \gamma_i|-\log|\gamma_i|+2Dh(\gamma_i)\}\quad (i=1,2),\\
a_1'a_2'& \ge {\lambda'}^2.
\end{align*}
Then 
$$
\log|\Lambda| \ge -C\left(h'+\frac{\lambda'}{\sigma}\right)^2a_1'a_2' - 
\sqrt{\omega\theta}\left(h'+\frac{\lambda'}{\sigma}\right)-\log\left(C'\left(h'+\frac{\lambda'}{\sigma}\right)^2a_1'a_2'\right)
$$
with 
$$
C=\frac{\mu}{\lambda'^3\sigma}\left(\frac{\omega}{6}+\frac{1}{2}\sqrt{\frac{\omega^2}{9}+
\frac{8\lambda'\omega^{5/4}\theta^{1/4}}{3\sqrt{a_1'a_2'}H^{1/2}}+\frac{4}{3}\left(\frac{1}{a_1'}+\frac{1}{a_2'}\right)\frac{\lambda'\omega}{H}}\right)^2,
\quad C'=\sqrt{\frac{C\sigma\omega\theta}{\lambda'^3\mu}}.
$$
\end{theorem}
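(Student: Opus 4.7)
The plan is to follow the interpolation determinant method developed by Laurent, Mignotte, and Nesterenko, which is the standard tool for producing sharp explicit lower bounds for linear forms in two logarithms. The argument proceeds by contradiction: assume that $|\Lambda|$ is strictly smaller than the claimed lower bound. One then constructs an algebraic quantity that is simultaneously very small in the archimedean absolute value (because $\Lambda$ is small) and bounded below in an arithmetic sense (because it is essentially a nonzero algebraic integer of controlled degree and height). The contradiction between these two bounds yields the stated inequality.

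The core construction is as follows. Choose positive integers $L_1, L_2, T, S$ whose precise sizes are governed by the parameters $h', \varrho, \mu$ appearing in the theorem. Form the square matrix $\mathcal{M}$ of size $N = L_1 L_2$ whose rows are indexed by pairs $(\lambda_1, \lambda_2)$ with $0 \leq \lambda_i < L_i$, and whose columns are indexed by pairs $(t,s)$ with $0 \leq t \leq T$ and $s$ running through a carefully chosen set of interpolation points. The entries take the shape $\binom{s}{t}(\lambda_1 b_2 + \lambda_2 b_1)^t \gamma_1^{\lambda_1 s}\gamma_2^{\lambda_2 s}$, possibly with Fel'dman binomial polynomials replacing the naive powers in order to sharpen the final constant. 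The key observation is that the identity $b_2 \log \gamma_2 = b_1 \log \gamma_1 + \Lambda$ allows us to rewrite $\gamma_2^{\lambda_2 s}$ as $\gamma_1^{(b_1/b_2)\lambda_2 s}\exp(\lambda_2 s \Lambda / b_2)$, so that, to first order in $\Lambda$, the rows of $\mathcal{M}$ become linearly dependent.

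Expanding the exponential in $\Lambda$ and applying a Schwarz-type lemma (or Hermite's interpolation formula) produces an upper bound of the form $|\det \mathcal{M}| \leq C_1 |\Lambda|^{N/2} \exp(C_2 N)$ for explicit constants depending on $a_1', a_2', \varrho$. On the other hand, after multiplying by an appropriate power of the leading coefficients of the minimal polynomials of $\gamma_1$ and $\gamma_2$, the determinant becomes an algebraic integer; Liouville's inequality across its conjugates gives a lower bound of the form $|\det \mathcal{M}| \geq \exp(-C_3 N D h(\gamma_i))$. The non-vanishing of $\det \mathcal{M}$, which is the most delicate point of the whole scheme, is established by a zero-estimate argument that ultimately rests on the multiplicative independence of $\gamma_1$ and $\gamma_2$: a vanishing determinant would force a nontrivial algebraic relation among sufficiently many powers of $\gamma_1$ and $\gamma_2$, and the free parameters $L_1, L_2, T, S$ are chosen exactly so that this is impossible.

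The main obstacle — and what distinguishes Laurent's result from the coarser predecessors — is not the overall architecture of the proof but the very delicate numerical optimization that extracts the explicit constant $C$ with the exact shape given in the statement. The parameters $\mu, \sigma, H, \omega, \theta$ are tuned to minimize the gap between the upper and lower bounds on $|\det \mathcal{M}|$; the small correction terms involving $\sqrt{\omega\theta}(h' + \lambda'/\sigma)$ and $\log(C'(\cdots)^2 a_1' a_2')$ reflect, respectively, the use of Fel'dman-type binomial coefficients and the combinatorial entropy of the interpolation matrix. Balancing these terms against the hypothesis $a_1' a_2' \geq \lambda'^2$ and $h' \geq \lambda'$ is where essentially all the technical work of Laurent's paper lies, and no conceptual simplification seems available: the final numerical constants are what one gets by optimizing this family of inequalities as tightly as possible.
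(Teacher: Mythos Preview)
There is nothing to compare against: the paper does not prove this statement at all. Theorem~\ref{thm:L1} is quoted verbatim as ``Theorem~2 of \cite{Laurent:2008}'' and is used as an external black-box tool, exactly like Matveev's theorem (Theorem~\ref{thm:Matveev}) and Mignotte's kit (Theorem~\ref{thm:kit}). The paper's contribution with respect to this theorem is only its \emph{application} to the specific linear forms $r_1't_2\Lambda_1$, $\Lambda_2$, and $\Lambda_3$, with carefully chosen values of $\varrho$, $\mu$, $a_1'$, $a_2'$, and $h'$.

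Your sketch is a reasonable high-level outline of the interpolation-determinant method that underlies Laurent's paper, and the ingredients you name (the matrix of monomials in $\gamma_1^{\lambda_1 s}\gamma_2^{\lambda_2 s}$, the Liouville lower bound, the zero estimate relying on multiplicative independence, Fel'dman polynomials, and the delicate numerical optimization) are indeed the components of that proof. But none of this belongs in the present paper, and the authors make no attempt to reproduce it. If your intent was to supply a proof where the paper gives one, you have misidentified the target: this theorem is cited, not proved.
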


In order to apply Theorem~\ref{thm:L1} to $-r_1't_2\Lambda_1$, we consider 
$$
D=4,\quad b_1 = 1,\quad b_2 =2h,\quad \gamma_1 = \alpha_2^{\delta r_1's_1'} \cdot\alpha_3^{-r_1't_2},
\quad \gamma_2 =  \alpha_1^{r_1't_2}\cdot \alpha_2^{-s_1't_1}. 
$$

Since $\alpha_1$, $\alpha_2$ and $\alpha_3$ are multiplicatively independent due to Lemma \ref{lem:mult_ind}, 
$\gamma_1$ and $\gamma_2$ are also multiplicatively independent and we may apply Theorem~\ref{thm:L1}.

Also note that a result coming from option \textbf{(A2)}, which surpasses \eqref{eq:hh} would not effect the final result.
Therefore, we may assume that $h$ is large, i.e. we may assume that $h\geq 3.8164\cdot 10^{9}\log \alpha_2 \log c$. We will keep
this assumption for the rest of the study of option \textbf{(A2)}.

Next we have to compute the heights and absolute values of the logarithms of $\gamma_1$ and $\gamma_2$. We obtain   
\begin{align*}
h(\gamma_1) &\le |\delta r_1's_1' | h(\alpha_2)  + |r_1't_2| h(\alpha_3)\\
&\le 0.5B_1\log \alpha_2 + B_3 \log c \le 212.2811 \log \alpha_2\log c,\\
h(\gamma_2) &\le |r_1't_2| h(\alpha_1) + |s_1't_1|h(\alpha_2)\\
&<0.5B_2 \log \alpha_1 + 0.5 B_3 \log \alpha_2\le 160.8641 (\log \alpha_2)^2.\\
|\log \gamma_1| &\le |\delta s_1' r_1'| \log \alpha_2  + |t_2r_1'| \log \alpha_3\\
&\le B_1 \log \alpha_2  + 0.46 B_3 \le 102.794 \log \alpha_2\log c. 
\end{align*}
In order to get a sharp upper bound for $|\log\gamma_2|$ we have a closer look on the linear form in logarithms \eqref{eq:twologform}. Since $|\Lambda_1|<1$, we obtain the inequality
$$|\log \gamma_2|<\frac{B_3+|\log(\gamma_1)|}{2h}<1.5\cdot 10^{-8},$$
by the previous estimates for $B_3$ and $|\log(\gamma_1)|$ together with our assumption on $h$.

In our next step we consider the quantities $a_1'$ and $a_2'$. We choose these quantities such that
\begin{align*}
a_1' &\geq 102.794 (\varrho+1) \log \alpha_2 \log c+ 1698.2488 \log \alpha_2\log c\\
& >(\varrho +1) |\log \gamma_1| + 8 h(\gamma_1)\\   
a_2' &\geq 1.5\cdot 10^{-8}(\varrho+1)+1286.9128 (\log \alpha_2)^2\\
&>(\varrho +1) |\log \gamma_2| + 8 h(\gamma_2). 
\end{align*}

If we choose $\varrho=52$ and $\mu = 0.61$, then we get $\sigma =  0.92395$ and $\lambda' =  3.65075\dots <3.651$. 
In view of these choices, we take
 $$
 a_1'= 7146.331 \log \alpha_2 \log c, \quad a_2'= 1286.913(\log\alpha_2)^2.
 $$
 
If we introduce the quantity
 $$F:=\frac{2.798639\cdot10^{-4} h}{\log \alpha_2 \log c}\ge  \frac{b_1}{a_2'} + \frac{b_2}{a_1'}$$
then we can write
$$h':= 4\log F+12.2398.$$
On the other hand, we assume that $h\geq 3.8164\cdot 10^{9}\log \alpha_2 \log c$, which yields $F>1.068\cdot 10^6$.
Thus, we may assume that 
$$H=\frac{h'}{\lambda'} + \frac{1}{\sigma}>19.6429.$$
The lower bound for $H$ gives us now the following upper bounds:
$$
\omega<4.00065,\quad \theta<1.02578, \quad C<0.02413,\quad C'<0.05551. 
$$

Now we have computed all quantities to apply Theorem~\ref{thm:L1} and we get
\begin{align*}
\log |r_1't_2 \Lambda_1 |&> -  221916.53\left(h'+\frac{\lambda'}{\sigma}\right)^2(\log\alpha_2)^3\log c\\
&\phantom{00000} -2.0258\left(h'+\frac{\lambda'}{\sigma}\right)
-\log\left( 510509.17 \left(h'+\frac{\lambda'}{\sigma}\right)^2(\log\alpha_2)^3\log c\right)\\
&>-221916.6\left(h'+\frac{\lambda'}{\sigma}\right)^2(\log\alpha_2)^3\log c.
\end{align*}
By Lemma \ref{lem:low-Lamb1}, we have 
$$
\log |r_1't_2 \Lambda_1 |<\log B_3-4j\log\alpha_2 < \log B_3-4h \log \alpha_1
$$
 and since $\log\alpha_2 < 3\log \alpha_1$, we get 
 \begin{align*}
 h &<166437.45\left(h'+\frac{\lambda'}{\sigma}\right)^2(\log\alpha_2)^2\log c + \frac{\log(160.915\log \alpha_2)}{\log\alpha_1}\\
 &<166437.46\left(h'+\frac{\lambda'}{\sigma}\right)^2(\log\alpha_2)^2\log c.
\end{align*}

Multiplying the above inequality by $\frac{2.798639\cdot10^{-4}}{\log \alpha_2 \log c}$ and noting that $h'= 4\log F+12.2398$
we obtain
\begin{equation}\label{eq:F}
F<745.278\left(\log F+4.048\right)^2(\log\alpha_2).
\end{equation}
By Proposition~\ref{pro:1}, we have $ac<6.18\cdot 10^{32}$ and so $\log\alpha_2 < 38.446$. From \eqref{eq:F} we deduce that 
$$
F < 28652.96(\log F+4.048)^2, 
$$
which yields $F<1.18493\cdot 10^7$. Thus we get the inequality 
\begin{equation}\label{eq:upp1}
h<4.234\cdot 10^{10}\log \alpha_2 \log c.
\end{equation}
We use the inequality $h>6.2462\sqrt{ac}$ (cf. Lemma \ref{lem:hbd}) and
obtain from inequality \eqref{eq:upp1} that $ac<1.6\cdot 10^{26}$. This implies that $\log\alpha_2<30.8618$. 
Inserting this value again into \eqref{eq:F} we get
$$
F < 23000.63(\log F+4.048)^2
$$
and we get $F<9.2851\cdot 10^6$. Thus, we obtain a slightly improved version of inequality~\eqref{eq:upp1}
\begin{equation}\label{eq:upp2}
h<3.3178\cdot 10^{10}\log \alpha_2 \log c,
\end{equation}
which is unfortunately still weaker than inequality \eqref{eq:hh}. However, we
obtain from this last inequality that $ac<9.45\cdot 10^{25}$ and $h<6.08\cdot 10^{13}$.

Now, let us briefly discuss option \textbf{(A1)}. In this case, we similarly proceed as in the case \textbf{(A2)}.
However, in this case we apply Theorem~\ref{thm:L1} to 
$$
r_0\Lambda_1 = 2h\log\left(\alpha_1^{r_0} \alpha_2^{-s_0}\right) + r_0\log\alpha_3,
$$
with $|r_0|\le B_3$ and $|s_0|\le B_2$. Therefore, we choose $\gamma_1= \alpha_1^{r_0}$ and $\gamma_2=\alpha_3^{r_0}$.
Computing the values for $h(\gamma_i)$ and $|\log \gamma_i|$ for $i=1,2$ one obtains smaller values than
in the case \textbf{(A2)}. Therefore, in the case \textbf{(A1)} one obtains smaller upper bounds than indicated by~\eqref{eq:upp2}.

We apply Theorem \ref{thm:kit} in combination with Theorem \ref{thm:L1} two times more. Choosing $\rho=9$, $\chi=2$, $L=519$,
$M=14.02$ in Theorem \ref{thm:kit} and $\varrho=57$ and $\mu=0.61$ in Theorem \ref{thm:L1} together with the upper bounds
$ac<9.47\cdot 10^{25}$ and $h<6.08\cdot 10^{13}$ yields the slightly better bounds $ac<6.87\cdot 10^{25}$ and $h=5.18\cdot 10^{13}$. 
Now choosing $\rho=9$, $\chi=2$, $L=518$, $M=13.92$ in Theorem \ref{thm:kit} and $\varrho=56$ and $\mu=0.61$ in Theorem \ref{thm:L1} yields.

\begin{proposition}\label{pro:2}
  If $\{a,b,c,d,e\}$ is a Diophantine quintuple with $a < b < c < d < e$, then we have 
  $ac<6.77\cdot 10^{25}$, $d<1.83\cdot 10^{52}$. Provided that $c>2\cdot 10^8$, we also have
  \begin{equation}\label{eq:hh2}
h<2.8376 \cdot 10^{10} \log \alpha_2 \log c<5.136\cdot 10^{13}.
\end{equation}  
\end{proposition}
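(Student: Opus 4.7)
The plan is to apply Theorem \ref{thm:kit} to the linear form $-\Lambda_1$ with the setup already indicated after the statement of the theorem, namely $b_1 = 2h$, $b_2 = 2j$, $b_3 = 1$, $\alpha_1 = r+\sqrt{ab}$, $\alpha_2 = s+\sqrt{ac}$, $\alpha_3 = \sqrt{c}(\sqrt a + \sqrt b)/(\sqrt b(\sqrt a+\sqrt c))$, where the multiplicative independence is guaranteed by Lemma \ref{lem:mult_ind}. With the technical assumption $c > 2\cdot 10^8$, I would use the choices $\chi = 2$, $L = 518$, $M = 13.92$, $\rho = 9$ and set $a_1 = (\rho+3)\log\alpha_1$, $a_2 = (\rho+3)\log\alpha_2$, $a_3 = 0.46(\rho-1) + 8\log c$, exploiting the estimate $\log\alpha_3 < \log(1+\sqrt{a/b}) < 0.46$ that follows from $b > 3a$ (Lemma \ref{lem:b3a}). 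First I would compute $R_i, S_i, T_i, c_0, \Omega, K$ and verify that condition \eqref{eq:condition1} holds by splitting the inequality into four parts and bounding each in terms of $\log\alpha_1\log\alpha_2\log c$; the crucial ingredient for the fourth part is Proposition \ref{pro:1}, which furnishes $h \le 1.55\cdot 10^{17}$ and hence the bound on $\log\tilde b$.

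If the main conclusion of Theorem \ref{thm:kit} holds, then combining the resulting lower bound for $\log|\Lambda_1|$ with the upper bound $\log|\Lambda_1| < -4j\log\alpha_2 \le -4h\log\alpha_1$ from Lemma \ref{lem:low-Lamb1} will yield the desired estimate \eqref{eq:hh2}. If instead the degenerate case (A1) or (A2) occurs, then the exponents $r_0, s_0$ or $r_1', s_1', t_1, t_2$ satisfy the displayed size bounds in terms of $B_1, B_2, B_3$, which I would estimate as in the text using $\mathcal M > 2c_1^{3/2}a_1 a_2 a_3$. Multiplying $\Lambda_1$ by $r_1't_2$ (respectively $r_0$) collapses it to a linear form in the two logarithms $\log(\alpha_1^{r_1't_2}\alpha_2^{-s_1't_1})$ and $\log(\alpha_2^{\delta r_1' s_1'}\alpha_3^{-r_1't_2})$, to which I would apply Laurent's Theorem \ref{thm:L1} with $D = 4$, $\varrho = 56$, $\mu = 0.61$ after bounding the heights and absolute logarithms of the two combined numbers. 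Here I can assume $h$ is already larger than what the Matveev branch would produce, since otherwise (A1)/(A2) do not give the binding constraint.

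The output of Theorem \ref{thm:L1}, again compared against the upper bound from Lemma \ref{lem:low-Lamb1}, will produce an inequality of the form $F < C_1(\log F + C_2)^2 \log\alpha_2$ for a suitable $F$ essentially proportional to $h/(\log\alpha_2 \log c)$. Solving this transcendental inequality first with the crude bound $\log\alpha_2 < 38.446$ from Proposition \ref{pro:1} gives a first round, and substituting the improved bound on $ac$ via $h > 6.2462 \sqrt{ac}$ (Lemma \ref{lem:hbd}) and iterating twice produces the refinements $ac < 6.87\cdot 10^{25}$, $h < 5.18\cdot 10^{13}$, and finally $ac < 6.77\cdot 10^{25}$. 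The bound on $d$ then follows from $d < 4abc + 4c < 4(ac)^2 + 4ac$ via Lemma \ref{lem:d+ieq} and Lemma \ref{lem:acb}, yielding $d < 1.83\cdot 10^{52}$.

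The main obstacle is the degenerate case (A2): the quantities $B_1, B_2, B_3$ must be estimated tightly enough that the subsequent application of Laurent's theorem produces a bound no worse than the direct Matveev/Mignotte branch. This forces a delicate balance in the choice of $L, M, \rho$ (for Theorem \ref{thm:kit}) and $\varrho, \mu$ (for Theorem \ref{thm:L1}), together with the iteration that successively shrinks $\log\alpha_2$ via the lower bound $h > 6.2462\sqrt{ac}$. The restriction $c > 2\cdot 10^8$ is merely a convenient floor ensuring that $A = \min\{a_1, a_2, a_3\}$ stays above the threshold required by Theorem \ref{thm:kit}; smaller values of $c$ will be handled separately in the sequel.
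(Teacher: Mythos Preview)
Your proposal is correct and follows essentially the same approach as the paper: apply Theorem~\ref{thm:kit} to $-\Lambda_1$, handle the degenerate options \textbf{(A1)}/\textbf{(A2)} by collapsing to a two-logarithm form and invoking Laurent's Theorem~\ref{thm:L1}, then iterate the whole procedure using the improved bounds on $h$ and $ac$ (via Lemma~\ref{lem:hbd}) with successively tuned parameters, ending at $\rho=9$, $L=518$, $M=13.92$, $\varrho=56$, $\mu=0.61$. The only nuance you gloss over is that the paper performs three full rounds with distinct parameter choices ($\rho=10$, $L=625$, $M=12.1$, $\varrho=52$ first, then $\rho=9$, $L=519$, $M=14.02$, $\varrho=57$, then your final values), each time feeding the previous round's bound on $h$ into the estimate for $\log\tilde b$ in condition~\eqref{eq:condition1}; you cannot start directly with the final parameters using only Proposition~\ref{pro:1}'s crude $h<1.55\cdot 10^{17}$.
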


From a computational point of view these upper bounds are still too large to apply the Baker-Davenport reduction method directly.

\section{Euler triples} \label{sec:7}

In this section we still assume that $\{a,b,c,d,e\}$ is a Diophantine quintuple with $a<b<c<d<e$. However, we additionally assume that $\{a,b,c,d\}$ is an Euler quadruples, i.e. a Diophantine quadruple of the form $\{a,b, a+b+2r, 4r(a+r)(b+r)\}$. Particularly, we have 
$$s=a+r,\quad  t=b+r,$$ 
and 
$$x=at+rs,\quad y=rt+bs,\quad z=cr+st.$$
If we insert the relations for $s$ and $t$ into the relations for $x$, $y$ and $z$, then we obtain
\begin{align*}
 x=at+rs&=ab+2ar+r^2=2r^2+2ar-1\\
 &=2rs-1,\\
 y=rt+bs&=ab+2rb+r^2=2r^2+2rb-1\\
 &=2rt-1,\\
 z=cr+st&=ar+br+2r^2+st=ab+ar+br+r^2+st+1\\
 &=(a+r)(b+r)+st+1=2st+1.
\end{align*}
For the rest of this section, we will use these relations without any special reference. In the next lemma, we will use
these relations to obtain several congruence relations that will be crucial in this section.

\begin{lemma}\label{lem:lmn2}
Assume that $\{a,b,c,d,e\}$ is a Diophantine quintuple such that $\{a,b,c\}$ is an Euler triple. Then we have 
\begin{align*}
l &\equiv \frac{1-(-1)^j}{2} (-\varepsilon c) \pmod{s},& n &\equiv \frac{1-(-1)^j}{2} (\varepsilon a) \pmod{s},\\
m &\equiv \frac{1-(-1)^k}{2} (-\varepsilon c) \pmod{t},& n &\equiv \frac{1-(-1)^k}{2} (\varepsilon b) \pmod{t}.
\end{align*}
\end{lemma}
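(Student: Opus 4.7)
The plan is to exploit the explicit identities available only for Euler quadruples, namely $d=4rst$, $x=2rs-1$, $y=2rt-1$, and $z=2st+1$. These give at once $x\equiv -1$, $z\equiv 1$, $d\equiv 0\pmod{2s}$, and symmetrically modulo $2t$. Working modulo $2s$ (respectively $2t$) rather than modulo $s$ (respectively $t$) is essential because $s$ or $t$ may be even, and an extra factor of $2$ has to be carried through so that a later division by $2$ does not lose information. Each of the four claimed congruences will then follow by matching two recursions for a common sequence value ($X$, $Y$, or $Z$) from \eqref{eq:abii}--\eqref{eq:cdnn}.

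For the first congruence I would use $X=X^{(a,c)}_{2j}=X^{(a,d)}_{2l}$. The recursion $X^{(a,c)}_{n+2}=2s\,X^{(a,c)}_{n+1}-X^{(a,c)}_n$ with $X^{(a,c)}_0=1$ gives $X^{(a,c)}_{2j}\equiv(-1)^j\pmod{2s}$. On the other side, using $d\equiv 0$ and $x\equiv -1\pmod{2s}$ in the coupled recursion $W^{(a,d)}_{n+1}=xW^{(a,d)}_n+dX^{(a,d)}_n$, $X^{(a,d)}_{n+1}=aW^{(a,d)}_n+xX^{(a,d)}_n$, one first gets $W^{(a,d)}_n\equiv(-1)^n\varepsilon\pmod{2s}$, and then $X^{(a,d)}_{n+1}+X^{(a,d)}_n=aW^{(a,d)}_n+(x+1)X^{(a,d)}_n\equiv a\varepsilon(-1)^n\pmod{2s}$; telescoping from $X^{(a,d)}_0=1$ yields $X^{(a,d)}_{2l}\equiv 1-2la\varepsilon\pmod{2s}$. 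Equating produces $2la\varepsilon\equiv 1-(-1)^j\pmod{2s}$, and since the right-hand side is even we may divide by $2$ to obtain $la\varepsilon\equiv\frac{1-(-1)^j}{2}\pmod{s}$. Finally, $ac=s^2-1\equiv -1\pmod s$ gives $a^{-1}\equiv -c\pmod s$, and multiplication by $-c\varepsilon$ delivers $l\equiv\frac{1-(-1)^j}{2}(-\varepsilon c)\pmod{s}$.

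The remaining three congruences will be obtained by exactly the same recipe. For $n\pmod s$ compare $Z^{(a,c)}_{2j}\equiv(-1)^j\pmod{2s}$ with $Z^{(c,d)}_{2n}$: here $z\equiv 1\pmod{2s}$ makes the $c$-$d$ recursion essentially linear, forcing $W^{(c,d)}_n\equiv\varepsilon$ and $Z^{(c,d)}_{n+1}-Z^{(c,d)}_n\equiv c\varepsilon\pmod{2s}$, so that $Z^{(c,d)}_{2n}\equiv 1+2nc\varepsilon\pmod{2s}$; the identity $c^{-1}\equiv -a\pmod s$ then gives the stated formula for $n$. The two congruences modulo $t$ are entirely symmetric, using $Y=Y^{(b,c)}_{2k}=Y^{(b,d)}_{2m}$ and $Z=Z^{(b,c)}_{2k}=Z^{(c,d)}_{2n}$ together with $y\equiv -1$, $z\equiv 1$, $d\equiv 0\pmod{2t}$, and inverting $b,c$ modulo $t$ via $bc\equiv -1\pmod t$. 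I expect no conceptual obstacle in the proof beyond the careful bookkeeping of the factor $2$ in the modulus; this is why the identities $x+1=2rs$, $z-1=2st$, $y+1=2rt$, and $d=4rst$ (all divisible by $2s$ and by $2t$) are the genuine point of the argument.
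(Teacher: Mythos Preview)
Your proof is correct and follows essentially the same approach as the paper: match two expressions for each of $X$, $Y$, $Z$ coming from \eqref{eq:abii}--\eqref{eq:cdnn}, reduce modulo $2s$ (respectively $2t$) using the Euler-quadruple identities $d=4rst$, $x=2rs-1$, $y=2rt-1$, $z=2st+1$, and then invert $a$, $b$, $c$ via $ac\equiv-1\pmod s$ and $bc\equiv-1\pmod t$. The only cosmetic difference is that the paper first computes $X^{(a,d)}_{2l}\equiv 1+2\varepsilon axl$ and $Z^{(c,d)}_{2n}\equiv 1+2\varepsilon czn$ modulo $2d$ by expanding the surds $(x+\sqrt{ad})^{2l}=(2ad+1+2x\sqrt{ad})^l$ binomially, and only afterwards reduces modulo $2s$ and substitutes $x\equiv-1$, $z\equiv1$; you instead work modulo $2s$ from the start in the coupled integer recursions and telescope. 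Both routes give the identical congruences and the same final step.
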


\begin{proof}
If we consider relation \eqref{eq:acjj} modulo $2s$, then we obtain 
\begin{equation}\label{eq:acs}
\begin{split}
Z\sqrt{a} + X\sqrt{c}&=(\sqrt{a}+\sqrt{c})(s+\sqrt{ac})^{2j}\\ 
&=(\sqrt{a}+\sqrt{c})(2s^2-1+2s\sqrt{ac})^{j}\\ 
&\equiv (\sqrt{a}+\sqrt{c})(-1)^j\\ 
&= (-1)^j \sqrt{a} + (-1)^j\sqrt{c} \pmod{2s}.
\end{split}
\end{equation}
We immediately get
\begin{equation}\label{eq:alpha1}
X = X^{(a,c)}_{2j} \equiv (-1)^j \pmod{2s}
\end{equation}
and 
\begin{equation}\label{eq:gamma1}
Z= Z^{(a,c)}_{2j} \equiv (-1)^j \pmod{2s}. 
\end{equation}

To obtain further congruences for $X$ and $Z$, we consider \eqref{eq:adll} and \eqref{eq:cdnn} modulo $2d$ and obtain 
\begin{equation}\label{eq:add}
\begin{split}
W \sqrt{a} + X \sqrt{d} &= (\varepsilon \sqrt{a} + \sqrt{d})(x+\sqrt{ad})^{2l}\\ 
&=(\varepsilon\sqrt{a}+\sqrt{d})(2ad+1+2x\sqrt{ad})^{l}\\ 
&\equiv (\varepsilon\sqrt{a}+\sqrt{d})(1+2x\sqrt{ad})^l\\ 
&\equiv (\varepsilon\sqrt{a}+\sqrt{d})(1+2lx\sqrt{ad}) \\ 
&\equiv \varepsilon\sqrt{a}+(1+2\varepsilon a  x l)\sqrt{d} \pmod{2d}
\end{split}
\end{equation}
and
\begin{equation}\label{eq:cdd}
\begin{split}
W\sqrt{c} +Z\sqrt{d}&=(\varepsilon\sqrt{c}+\sqrt{d})(z+\sqrt{cd})^{2n}\\ 
&= (\varepsilon\sqrt{c}+\sqrt{d})(2cd+1+2z\sqrt{cd})^{n}\\ 
&\equiv (\varepsilon\sqrt{c}+\sqrt{d})(1+2z\sqrt{cd})^n\\ 
&\equiv (\varepsilon\sqrt{c}+\sqrt{d})(1+2nz\sqrt{cd}) \\ 
&\equiv \varepsilon\sqrt{c}+(1+2\varepsilon cz n)\sqrt{d} \pmod{2d},
\end{split}
\end{equation}
respectively. Therefore, we get some further congruences for $X$ and $Z$
\begin{equation}\label{eq:alpha2}
 X= X^{(a,d)}_{2l} \equiv 1+2\varepsilon a  x l \pmod{2d}
\end{equation}
and
\begin{equation}\label{eq:gamma2}
 Z= Z^{(c,d)}_{2n} \equiv 1+2\varepsilon czn \pmod{2d}.
\end{equation}

Since $d=4rst$ we take \eqref{eq:alpha2} modulo $2s$ and 
combining this congruence with \eqref{eq:alpha1} we get 
$$
1+2\varepsilon a  x l \equiv (-1)^j \pmod{2s},
$$
which implies 
$$
\varepsilon axl \equiv \frac{(-1)^j-1}{2} \pmod{s}.
$$
As $ac\equiv-1 \pmod{s}$, we have
$$
xl \equiv \frac{1-(-1)^j}{2}(\varepsilon c) \pmod{s}.
$$
Since $x=2rs-1\equiv -1 \pmod s$, we finally get 
$$
l \equiv \frac{1-(-1)^j}{2}(-\varepsilon c) \pmod{s},
$$
which is the first congruence of Lemma \ref{lem:lmn2}.

Similarly, we consider the congruences \eqref{eq:gamma1} and \eqref{eq:gamma2} and obtain 
$$
1+2\varepsilon czn \equiv (-1)^j \pmod{2s}.
$$
Thus, we deduce 
$$
\varepsilon czn \equiv \frac{(-1)^j-1}{2} \pmod{s}.
$$
Again, as $ac\equiv-1 \pmod{s}$, we get
$$
zn \equiv \frac{1-(-1)^j}{2}(\varepsilon a) \pmod{s}
$$
and since $z=2st+1\equiv 1 \pmod s$, we have 
$$
n \equiv \frac{1-(-1)^j}{2}(\varepsilon a) \pmod{s},
$$
which yields the second congruence of Lemma \ref{lem:lmn2}. Therefore, we have achieved the first half of Lemma \ref{lem:lmn2}. 

For the proof of the second half of Lemma \ref{lem:lmn2} we consider \eqref{eq:bckk} modulo $2t$ and get 
$$
Z\sqrt{b} + Y\sqrt{c}  \equiv (-1)^k\sqrt{b} + (-1)^k\sqrt{c} \pmod{2t}.
$$
From the above congruence, we obtain two congruences for $Y$ and $Z$:
\begin{equation}\label{eq:beta1}
Y = Y^{(b,c)}_{2k} \equiv (-1)^{k} \pmod{2t}
\end{equation}
and 
\begin{equation}\label{eq:gamma3}
Z= Z^{(b,c)}_{2k}\equiv (-1)^{k}\pmod {2t}. 
\end{equation}
Similarly to \eqref{eq:alpha2}, we get
\begin{equation}\label{eq:beta2}
Y = Y^{(b,d)}_{2m} \equiv  1+2\varepsilon b  y m \pmod{2d}.
\end{equation}

Combining \eqref{eq:beta1} and \eqref{eq:beta2}, we deduce by similar arguments as above that
$$
1+2\varepsilon bym \equiv (-1)^k \pmod{2t}.
$$
Since $bc\equiv -1 \pmod t$ and $y\equiv -1\pmod{t}$, we obtain
$$
m\equiv \frac{1-(-1)^k}{2}(-\varepsilon c) \pmod{t},
$$
which establishes the third congruence of Lemma \ref{lem:lmn2}.

Finally, we consider the congruences \eqref{eq:gamma2} and \eqref{eq:gamma3} and obtain
$$
1+2\varepsilon czn \equiv (-1)^k \pmod{2t}.
$$
Using the congruences $bc\equiv -1 \pmod{t}$ and $z\equiv 1\pmod{t}$, we have 
$$
n \equiv \frac{1-(-1)^k}{2}(\varepsilon b) \pmod{t}.
$$
This completes the proof of Lemma \ref{lem:lmn2}.
\end{proof}

\begin{lemma}\label{lem:lmnst}
 Suppose that $\{a,b,c,d,e\}$ is a Diophantine quintuple such that $\{a,b,c\}$ is an Euler triple. 
 Then, at least one of the following congruences holds 
\begin{description}
\item[I\phantom{II}] $l\equiv n\equiv 0\pmod{s}$,
\item[II\phantom{I}]  $m\equiv n\equiv 0 \pmod{t}$,
\item[III]  $n\equiv -\varepsilon r \pmod{st}$.
\end{description}
\end{lemma}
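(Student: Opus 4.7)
The plan is to apply Lemma \ref{lem:lmn2} directly and split into cases depending on the parities of $j$ and $k$, using the Euler-triple identities $s=a+r$, $t=b+r$ throughout.

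First, if $j$ is even, then Lemma \ref{lem:lmn2} immediately yields $l\equiv 0 \pmod{s}$ and $n\equiv 0 \pmod{s}$, which is case I. Symmetrically, if $k$ is even, then $m\equiv 0 \pmod{t}$ and $n\equiv 0 \pmod{t}$, which is case II. Thus it suffices to treat the remaining case in which both $j$ and $k$ are odd. Here Lemma \ref{lem:lmn2} gives
\[
 n\equiv \varepsilon a \pmod{s},\qquad n\equiv \varepsilon b \pmod{t}.
\]
Since $s=a+r$, we have $a\equiv -r \pmod{s}$, so $n\equiv -\varepsilon r \pmod{s}$; since $t=b+r$, we have $b\equiv -r \pmod{t}$, so $n\equiv -\varepsilon r \pmod{t}$. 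By the Chinese remainder theorem this will yield $n\equiv -\varepsilon r \pmod{st}$ provided $\gcd(s,t)=1$.

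The one substantive step is therefore to prove $\gcd(s,t)=1$. Suppose for contradiction that a prime $p$ divides both $s$ and $t$. From $s^{2}=ac+1$ we get $p\nmid ac$, so $p\nmid a$ and $p\nmid c$; similarly $p\nmid b$. The congruences $ac\equiv -1\equiv bc\pmod{p}$ give $(a-b)c\equiv 0 \pmod{p}$ and hence $a\equiv b \pmod{p}$. Then $r^{2}=ab+1\equiv a^{2}+1 \pmod{p}$, i.e.\ $(r-a)(r+a)\equiv 1 \pmod{p}$. But $r+a=s\equiv 0 \pmod{p}$, giving $0\equiv 1 \pmod{p}$, the required contradiction. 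Hence $\gcd(s,t)=1$ and case III follows.

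I do not foresee any real obstacle; the argument is essentially a case split on the parities of $j$ and $k$ combined with the explicit form of $s$ and $t$ for Euler triples. The only nontrivial ingredient is the coprimality of $s$ and $t$, which falls out of the Pell relations $s^{2}-ac=t^{2}-bc=1$ together with $s-t=a-b$.
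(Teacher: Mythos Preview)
Your proof is correct and follows essentially the same route as the paper: split on the parities of $j$ and $k$, apply Lemma~\ref{lem:lmn2}, use $s=a+r$, $t=b+r$ to rewrite the residues, and finish with CRT. The only difference is in the coprimality step, where the paper observes more directly that $s+t=a+b+2r=c$ and hence $\gcd(s,t)=\gcd(s,s+t)=\gcd(s,c)=1$ (since $s^2=ac+1$); your prime-divisor argument reaches the same conclusion by a slightly longer path.
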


\begin{proof}
If $j$ is even, then $\frac{1-(-1)^j}{2}=0$ and by Lemma~\ref{lem:lmn2}, we have
$$
l\equiv n\equiv 0\pmod{s},
$$
which yields case I.

If $k$ is even, then we similarly get 
$$
m\equiv n\equiv 0 \pmod{t},
$$
which yields case II.

Therefore, we may assume that both $j$ and $k$ are odd. Thus, Lemma~\ref{lem:lmn2} provides us with 
$$
 n\equiv \varepsilon a \pmod{s}, \quad  n \equiv  \varepsilon b   \pmod{t}.
$$
Since $s=a+r$ and $t=b+r$, we have 
$$
 n\equiv -\varepsilon r \pmod{s},
\quad  n \equiv  -\varepsilon r   \pmod{t}.
$$
As
$$
\gcd(s,t)=\gcd(s,s+t)=\gcd(s,c)=1,
$$ 
we have by the Chinese remainder theorem 
$$
n\equiv -\varepsilon r \pmod{st}.
$$
Therefore, Lemma \ref{lem:lmnst} is proved completely.
\end{proof}

Our next aim is to show that the options I and II of Lemma \ref{lem:lmnst} yield only small solutions. Such a result will be
achieved by using linear forms in logarithms. In particular, let
\begin{gather*}
\beta_1= x+\sqrt{ad},\quad \beta_2 = y+ \sqrt{bd},\quad \beta_3 = z+\sqrt{cd},\\
\beta_4 = \frac{\sqrt{c}(\varepsilon\sqrt{a}+\sqrt{d})}{\sqrt{a}(\varepsilon\sqrt{c}+\sqrt{d})}, \quad \beta_5=\frac{\sqrt{c}(\varepsilon\sqrt{b}+\sqrt{d})}{\sqrt{b}(\varepsilon\sqrt{c}+\sqrt{d})}. 
\end{gather*}
Then, we consider the two linear forms in logarithms
$$
\Lambda_2=2l\log\beta_1 - 2n\log\beta_3+\log\beta_4,
$$
and
$$
\Lambda_3= 2m\log\beta_2- 2n\log\beta_3+\log\beta_5.
$$
As a first step to obtain upper bounds for $s$ and $t$ in case I and II we will consider the linear forms $\Lambda_2$ and $\Lambda_3$ in case I and II respectively.

First, let us establish good upper bounds for these linear forms in logarithms. This can be done
with the help of \cite[Lemma 5]{Dujella:2001}. Instead of considering the Diophantine quadruple $\{a,b,c,d\}$,
we consider the Diophantine quadruples $\{a,b,d,e\}$ and $\{b,c,d,e\}$ respectively. Finally, note that the condition
$c>4b$ in \cite[Lemma 5]{Dujella:2001} is in our situation fulfilled since $d>4abc>4b$ and $d>4abc>4c$ respectively. Therefore, we
obtain the following lemma.

\begin{lemma}\label{lem:uppbd}
We have
$0<\Lambda_2<\frac{8}{3}ad\cdot \beta_1^{-4l}$ and $0<\Lambda_3<\frac{8}{3}bd\cdot \beta_2^{-4m}$. 
\end{lemma}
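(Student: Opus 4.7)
The plan is to mimic, in both cases, the proof of Lemma~\ref{lem:low-Lamb1}. Just as the identity $X^{(a,b)}_{2h}=X^{(a,c)}_{2j}$ produced $\Lambda_1$, here I will exploit the intersections $W^{(a,d)}_{2l}=W^{(c,d)}_{2n}$ (for $\Lambda_2$) and $W^{(b,d)}_{2m}=W^{(c,d)}_{2n}$ (for $\Lambda_3$). These come from the Pell systems attached to the Diophantine quadruples $\{a,b,d,e\}$ and $\{b,c,d,e\}$, exactly as in Lemma~5 of~\cite{Dujella:2001}; the hypothesis $c>4b$ in that lemma is replaced here by $d>4abc$, which is supplied by Lemma~\ref{lem:d+ieq}.

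Solving the recursions from \eqref{eq:adll} and \eqref{eq:cdnn}, and using $(x+\sqrt{ad})(x-\sqrt{ad})=(z+\sqrt{cd})(z-\sqrt{cd})=1$, one obtains
\[
W^{(a,d)}_{2l}=\frac{(\varepsilon\sqrt{a}+\sqrt{d})\beta_1^{2l}+(\varepsilon\sqrt{a}-\sqrt{d})\beta_1^{-2l}}{2\sqrt{a}},
\]
together with the analogous formula for $W^{(c,d)}_{2n}$. Setting
\[
P_2:=\tfrac{\varepsilon\sqrt{a}+\sqrt{d}}{\sqrt{a}}\,\beta_1^{2l},\qquad Q_2:=\tfrac{\varepsilon\sqrt{c}+\sqrt{d}}{\sqrt{c}}\,\beta_3^{2n},
\]
one checks directly that $\Lambda_2=\log(P_2/Q_2)$. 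Moreover, the identity $(\varepsilon\sqrt{a}+\sqrt{d})(\varepsilon\sqrt{a}-\sqrt{d})=a-d$ converts the equality $W^{(a,d)}_{2l}=W^{(c,d)}_{2n}$ into
\[
P_2-\frac{d-a}{a P_2}\;=\;Q_2-\frac{d-c}{c Q_2}.
\]

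Positivity of $\Lambda_2$ then follows from the same contradiction trick as in Lemma~\ref{lem:low-Lamb1}: if $P_2\le Q_2$, then the displayed identity would force $(d-c)/(cQ_2)\ge (d-a)/(aP_2)\ge (d-a)/(aQ_2)$, hence $a(d-c)\ge c(d-a)$, i.e.\ $a\ge c$, a contradiction. For the upper bound I drop the nonnegative term $(d-c)/(cQ_2)$ from the same identity to get $P_2-Q_2<(d-a)/(aP_2)<d/(aP_2)$, and combine this with $\log(1+u)<u$ and the defining relation $\beta_1^{-4l}=(\varepsilon\sqrt{a}+\sqrt{d})^{2}/(aP_2^{2})$ together with control of the ratio $P_2/Q_2$. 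The bound $\Lambda_3<\tfrac{8}{3}bd\,\beta_2^{-4m}$ is obtained verbatim with $b$ in place of $a$, the inequality $b<c$ playing the role of $a<c$ in the positivity step.

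The main obstacle is not the positivity or the qualitative shape $\Lambda_2=O(ad\,\beta_1^{-4l})$, which fall out routinely from the Dujella-style argument; it is pinning down the constant $8/3$. This requires tracking the auxiliary factor $(\varepsilon\sqrt{a}+\sqrt{d})^{2}/d$ and the ratio $P_2/Q_2$ with sufficient precision, and then absorbing all lower-order corrections using the coarse but crucial inequality $d\ge 4abc$ together with the bounds $b>3a$, $b\ge 15$, $c\ge 24$ of Lemmas~\ref{lem:b3a} and~\ref{lem:min_bcd}, so that $d$ dominates $a,b,c$ by a comfortable margin.
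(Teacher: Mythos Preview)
Your argument is correct and is exactly the paper's approach: the paper simply invokes \cite[Lemma~5]{Dujella:2001}, and you have reproduced that lemma's proof in the present notation. Two minor remarks: the quadruple relevant to $\Lambda_2$ is $\{a,c,d,e\}$, not $\{a,b,d,e\}$ (your own intersections $W^{(a,d)}_{2l}=W^{(c,d)}_{2n}$ already say this; the paper contains the same slip), and your final paragraph overstates the difficulty of the constant~$\tfrac{8}{3}$ --- from $P_2-Q_2<d/(aP_2)$ together with $(\sqrt d-\sqrt a)^2>d/4$ (since $a<c<d/4$) one gets $Q_2>P_2/2$ as soon as $l\ge1$, hence $\Lambda_2<2d/(aP_2^{2})\le 8\,\beta_1^{-4l}$, which is already far below $\tfrac{8}{3}ad\,\beta_1^{-4l}$ because $ad\ge 1520$.
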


The next lemma will deal with the case I. 

\begin{lemma}\label{lem:euler1}
If $l\equiv n\equiv 0 \pmod{s}$, then $s<20493$. 
\end{lemma}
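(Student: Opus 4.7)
The plan is to exploit the divisibility $s \mid l$ and $s \mid n$ to collapse the three-term linear form $\Lambda_2 = 2l\log\beta_1 - 2n\log\beta_3 + \log\beta_4$ into a two-term linear form, for which Laurent's sharp bound (Theorem \ref{thm:L1}) will give a much stronger lower bound than Mignotte's kit in three variables could.

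Concretely, I would write $l = s\ell$ and $n = s\nu$ with non-negative integers $\ell, \nu$ (both positive, since $n\ge 4$ by Lemma \ref{lem:lmn} and we may assume $s$ exceeds a small constant). Set $\gamma_1 := \beta_1^{\ell}\beta_3^{-\nu}$ and $\gamma_2 := \beta_4$, so that
$$
\Lambda_2 = 2s\log\gamma_1 + \log\gamma_2.
$$
Using that $\beta_1$ (resp.\ $\beta_3$) is a root of $X^2-2xX+1$ (resp.\ $X^2-2zX+1$), one has $h(\beta_1) = \tfrac{1}{2}\log\beta_1$ and $h(\beta_3) = \tfrac{1}{2}\log\beta_3$, hence
$$
h(\gamma_1)\le \ell\, h(\beta_1) + \nu\, h(\beta_3) \le \frac{l}{2s}\log\beta_1 + \frac{n}{2s}\log\beta_3.
$$
The height $h(\gamma_2)=h(\beta_4)$ can be estimated exactly as was done for $\alpha_3$ in Section \ref{sec:6}, giving a bound of order $\log d$. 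Before invoking Laurent, I would verify that $\gamma_1$ and $\gamma_2$ are multiplicatively independent by computing the norm of $\gamma_2$ in the appropriate biquadratic field (cf.\ the proof of Lemma \ref{lem:mult_ind}); if the two are dependent one extracts a relation that, together with Lemma \ref{lem:lmn} and Proposition \ref{pro:2}, forces $s$ to be small.

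Next I would feed $\Lambda_2 = 2s\log\gamma_1 + \log\gamma_2$ into Theorem \ref{thm:L1} with $\mathcal{D}=4$ and carefully tuned parameters $\varrho$ and $\mu$ (similar to the choices in Section \ref{sec:6}). Writing $a_1' \asymp \varrho\, h(\gamma_1)$ and $a_2' \asymp \varrho\, h(\gamma_2)$, Laurent's bound takes the shape
$$
\log|\Lambda_2| \;\ge\; -C\bigl(h' + \lambda'/\sigma\bigr)^2\, a_1' a_2',
$$
where $h' \approx 4\log\!\bigl(2s/a_2' + 1/a_1'\bigr) + \text{const}$. This is then matched against the upper bound from Lemma \ref{lem:uppbd}:
$$
\log|\Lambda_2| \;<\; \log(\tfrac{8}{3}ad) - 4l\log\beta_1 \;\le\; \log(\tfrac{8}{3}ad) - 4s\log\beta_1,
$$
using $l \ge s$. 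Because $h(\gamma_1) \ll (l+n)/s \cdot \log\beta_3$ and Lemma \ref{lem:lmn} gives $l \le 2n$, the combined inequality reduces after standard manipulations to
$$
4s\log\beta_1 \;<\; C\, (\log s + \text{const})^2\, \log d \cdot \tfrac{1}{s}(\log\beta_1 + \log\beta_3),
$$
i.e.\ to an inequality of the form $s^2 \ll (\log s)^2 \log d$. Substituting the explicit bound $d < 1.83\cdot 10^{52}$ from Proposition \ref{pro:2} and solving this transcendental inequality numerically will then give the stated $s < 20493$.

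The main obstacle will be numerical: the explicit constant $20493$ is tight, so the choice of $\varrho$ and $\mu$ in Laurent's theorem must be optimised carefully (much as in the two repeated applications of Theorem \ref{thm:L1} in Section \ref{sec:6}), and the upper bound for $h(\gamma_2)$ has to be sharpened beyond the naive $\log d$ estimate by using the explicit shape of $\beta_4$. A secondary subtlety is handling the multiplicative-independence verification without losing any of the exponential gain from passing from three logarithms to two, which is the whole reason this step improves on the Proposition \ref{pro:2} bound.
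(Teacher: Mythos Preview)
Your overall strategy---collapsing $\Lambda_2$ to a linear form in two logarithms via $l=s\ell$, $n=s\nu$ and applying Laurent---is exactly what the paper does. However, your displayed inequality
\[
4s\log\beta_1 \;<\; C(\log s+\text{const})^2\,\log d\cdot\tfrac1s(\log\beta_1+\log\beta_3)
\]
and the conclusion $s^2\ll(\log s)^2\log d$ are wrong. The factor $\tfrac1s$ on the right cannot be there: in Laurent's theorem the parameter $a_1'$ must dominate $2\mathcal Dh(\gamma_1)=8h(\gamma_1)$, and $h(\gamma_1)$ is of size $\ell\log\beta_1$ (or $\nu\log\beta_3$), \emph{not} $\tfrac1s\log\beta_1$. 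You appear to have kept only the $|\log\gamma_1|\ll 1/s$ contribution and dropped the height term. Once the height is restored, the $\ell$ in $a_1'$ cancels against the $\ell$ in the upper bound $4l\log\beta_1=4s\ell\log\beta_1$, and one obtains only $s\ll(\log s)^2\cdot a_2'$, i.e.\ one power of $s$, not two.

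There is a second, subtler point that determines whether you hit the constant $20493$. Your bound $h(\gamma_1)\le \ell\,h(\beta_1)+\nu\,h(\beta_3)$ is the triangle inequality and is essentially twice the truth. Because $\beta_1,\beta_3$ are units, $\gamma_1=\beta_3^{\nu}/\beta_1^{\ell}$ is an algebraic integer and its conjugates are $\beta_3^{\pm\nu}\beta_1^{\mp\ell}$; computing the height exactly gives $h(\gamma_1)=\max\{\tfrac{\nu}{2}\log\beta_3,\tfrac{\ell}{2}\log\beta_1\}$, which (since $|\log\gamma_1|$ is tiny) is $\approx\tfrac{\ell}{2}\log\beta_1$. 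This halves $a_1'$ and, after dividing through, halves the final bound on $s$. With the triangle inequality you would land near $s<4\cdot10^4$, not $20493$. The paper also avoids substituting the crude global bound $d<1.83\cdot10^{52}$: instead it uses the Euler-triple identities $z=2st+1$ and $t<r^2<s^2$ to get $h(\beta_4)<\log\beta_3<\log(4s^3+2)$, so the final transcendental inequality is purely in $s$, namely $s<657.44\log(4s^3+2)+1$, which yields $20493$.
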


\begin{proof}
In the case that $r\leq 10000$ we get 
$$s=r+a< 2r\leq 20000.$$
Therefore, we may assume that $r>10000$. Let $l=sl_1, n=sn_1$, for some positive integers $l_1,n_1$. We rewrite $\Lambda_2$ into the form
$$
\Lambda_2= \log\beta_4-2s\log \left({\beta_3^{n_1}}/{\beta_1^{l_1}}\right).
$$
In view of an application of Laurent's result Theorem \ref{thm:L1} we set 
$$D=4,\quad b_1=2s, \quad b_2=1, \quad  \gamma_1= {\beta_3^{n_1}}/{\beta_1^{l_1}}, \quad \gamma_2 = \beta_4.$$

 Next we want to estimate the height of $\gamma_1$. Since $\beta_1$ and $\beta_3$ are units in $\QQ(\sqrt{ad})$ and $\QQ(\sqrt{bd})$ respectively, we deduce that
$\gamma_1$ is an algebraic integer, i.e. the leading coefficient of its minimal polynomial is $1$, and 
the conjugates of $\gamma_1$ are
$$\frac{\beta_3^{n_1}}{\beta_1^{l_1}},\quad \frac{\beta_3^{-n_1}}{\beta_1^{l_1}},
\quad \frac{\beta_3^{n_1}}{\beta_1^{-l_1}},\quad \frac{\beta_3^{-n_1}}{\beta_1^{-l_1}}. $$
Depending on whether ${\beta_3^{n_1}}>{\beta_1^{l_1}}$ or ${\beta_3^{n_1}}<{\beta_1^{l_1}}$ we have
$$h(\gamma_1) = \frac 14\left(\left|\log \frac{\beta_3^{n_1}}{\beta_1^{l_1}}\right|+\left|\log\frac{\beta_3^{n_1}}{\beta_1^{-l_1}}\right|\right)=
\frac{n_1}{2}\log \beta_3$$
or
$$h(\gamma_1) = \frac 14\left(\left|\log \frac{\beta_3^{-n_1}}{\beta_1^{-l_1}}\right|+\left|\log\frac{\beta_3^{n_1}}{\beta_1^{-l_1}}\right|\right)=
\frac{l_1}{2}\log \beta_1$$
respectively. On the other hand, the definition of $\Lambda_2$ together with Lemma~\ref{lem:uppbd} yields
$$
\left|\log \left({\beta_3^{n_1}}/{\beta_1^{l_1}}\right)\right| < \frac{1}{s}\left(\log\beta_4 + \frac{8}{3}ad\beta_1^{-4}\right)
<\frac{1}{s}\left(\log\beta_4+\frac{1}{ad}\right).$$
Claiming that $\beta_4<2\sqrt{\frac ca}$, we obtain
$$
\left|\log \left({\beta_3^{n_1}}/{\beta_1^{l_1}}\right)\right| < \frac{\log \left(2\sqrt{c/a}\right)}s+\frac{1}{sad}<\frac{\log 2s}s+\frac{1}{sad}<0.001
$$
since we assume that $s>r>10000$. Thus, in any case, we have  
$$
h(\gamma_1) <\frac{l_1}{2}\log \beta_1 +0.001. 
$$

To justify our claim that $\beta_4<2\sqrt{\frac ca}$, we compute
\begin{align*}
 \beta_4 &= \sqrt{\frac ca}\frac{\varepsilon\sqrt{a}+\sqrt{d}}{\varepsilon\sqrt{c}+\sqrt{d}}
 = \sqrt{\frac ca} \left(1-\varepsilon\frac{\sqrt{c}-\sqrt{a}}{\sqrt{d}+\varepsilon\sqrt{c}}\right)\\
 &\leq \sqrt{\frac ca} \left(1+\frac{\sqrt{c}}{\sqrt{d}-\sqrt{c}}\right)
 \leq \sqrt{\frac ca} \left(1+\frac{2\sqrt{c}}{\sqrt{d}}\right) \qquad \text{since}\;\; d>4abc>4c\\
 &\leq 2\sqrt{\frac ca}. 
\end{align*}

Next, we compute the height of $\gamma_2=\beta_4$. All the absolute values of conjugates of $\beta_4$, namely 
$$
\frac{\sqrt{c}(\sqrt{d}+\sqrt{a})}{\sqrt{a}(\sqrt{d}+\sqrt{c})},\quad \frac{\sqrt{c}(\sqrt{d}+\sqrt{a})}{\sqrt{a}(\sqrt{d}-\sqrt{c})},\quad
\frac{\sqrt{c}(\sqrt{d}-\sqrt{a})}{\sqrt{a}(\sqrt{d}+\sqrt{c})},\quad \frac{\sqrt{c}(\sqrt{d}-\sqrt{a})}{\sqrt{a}(\sqrt{d}-\sqrt{c})}, 
$$
are greater than one. Moreover, the minimal polynomial of $\beta_4$ is 
$$
a^2(d-c)^2X^4+4a^2c(d-c)X^3-2ac(d^2+db+dc-3ac)X^2+4ac^2(d-a)X+c^2(d-a)^2.
$$
Note that the minimal polynomial does not depend on $\epsilon$ since $\frac{\sqrt{c}(\sqrt{d}+\sqrt{a})}{\sqrt{a}(\sqrt{d}+\sqrt{c})}$
and $\frac{\sqrt{c}(\sqrt{d}-\sqrt{a})}{\sqrt{a}(\sqrt{d}-\sqrt{c})}$ are algebraic conjugates. Thus we obtain
$$
h(\gamma_2)=h(\beta_4) =\frac{1}{4}\log\left(a^2(d-c)^2\cdot \frac{c^2}{a^2} \cdot \frac{(d-a)^2}{(d-c)^2}\right)< \frac{1}{2}\log(cd) <\log \beta_3. 
$$

Choosing $\varrho=61$ and $\mu = 0.7$ in Theorem~\ref{thm:L1} we get $\sigma = 0.955$, $\lambda' = 3.92588...<3.93$. Moreover, we take
\begin{align*}
 a_1'&:=4l_1 \log\beta_1 +0.07\\
 &>0.001(\varrho+1)  +0.008+ 4l_1 \log\beta_1 \\
& \geq  \varrho|\log\gamma_1| - \log|\gamma_1| + 8h(\gamma_1).
 \end{align*}
 Since $c=a+b+2r<4b$, we have $d>4abc>c^2$ and $\beta_3>2\sqrt{cd}>2c^{3/2}$. Therefore we obtain
 \begin{align*}
a_2'&:=28\log \beta_3> (\varrho-1)\log(2^{1/3}\sqrt{c}) + 8\log \beta_3\\
&>(\varrho-1)\log\left(\frac{\sqrt{c}(\sqrt{d}+\epsilon\sqrt{a})}{\sqrt{a}(\sqrt{d}+\epsilon\sqrt{c})}\right)+8\log \beta_3\\
&>\varrho|\log\gamma_2| - \log|\gamma_2| + 8h(\gamma_2).
 \end{align*} 
Note that the second inequality holds since
$$
\frac{(\sqrt{d}+\sqrt{a})}{\sqrt{a}(\sqrt{d}+\sqrt{c})}\leq \frac{(\sqrt{d}-\sqrt{a})}{\sqrt{a}(\sqrt{d}-\sqrt{c})} \leq 1+\frac{\sqrt{c}-\sqrt{a}}{\sqrt{d}-\sqrt{c}}
 1+\frac{\sqrt{c}}{\sqrt{60c}-\sqrt{c}}<\frac 76<2^{1/3}.
$$
In particular note that $d>4abc\geq 60c$ since $b\geq 15$ due to Lemma \ref{lem:min_bcd}.

Since $r>10000$ we have
$$a_1'>4\log(2\sqrt{ad})>4\log(2\sqrt{4rst})>4\log(4(r^3)^{1/2})>60$$ 
and 
 $$a_2'>28\log(2\sqrt{cd})>28\log(2(16r^4)^{1/2})>574. $$
It is easy to see that $a'_1a'_2>\lambda'^2$ and our choice of parameters is admissible.

We set 
$$b':=\frac{2s}{a_2'}+0.02>\frac{2s}{a_2'} + \frac{1}{4l_1\log \beta_3+0.07}=\frac{b_1}{a_2'} + \frac{b_2}{a_1'}$$
and choose
\begin{align*}
h'&=4\log b'+  12.6\\
&\geq \max\left\{D\left(\log\left(\frac{b_1}{a'_2}+\frac{b_2}{a'_1}\right)+\log\lambda' +1.75\right)+0.06,\lambda',\frac{D\log 2}{2}\right\}.
\end{align*}
Therefore, as $\beta_3 = z+\sqrt{cd}<2z$ and  $z=2st+1<2s^3+1$, we have
\begin{align*}
h'&=4\log\left(\frac{s}{14\log\beta_3}+0.02\right)+12.6\\
&>4\log\left(\frac{s}{14\log(4s^3+2)}\right)+12.6>25.4.
\end{align*}
This implies that $H>7.5$. Then, we have $\omega<4.01$, $\theta<1.07$. And hence 
$$B:= \frac{1}{4}\left(h'+\frac{\lambda'}{\sigma}\right)<\log b'+4.2. $$
We obtain   
$$
C<0.0226,\quad C'<0.047.
$$

Now, we apply Theorem~\ref{thm:L1} and get 
\begin{equation}\label{eq:lola1}
\log|\Lambda_1| \ge -0.3616B^2a_1'a_2' -8.29B - \log (0.76B^2 a_1'a_2').
\end{equation}
By Lemma~\ref{lem:uppbd}, we have
\begin{equation}\label{eq:ula1}
\log |\Lambda_1| <-4sl_1\log \beta_1 + \log \left(\frac{8}{3}ad\right)=-s(a'_1-0.07)+ \log \left(\frac{8}{3}ad\right).
\end{equation}
Combining \eqref{eq:lola1} and \eqref{eq:ula1}, we obtain
\begin{equation}
s(a'_1-0.07)<0.3616 B^2a_1'a_2' +8.29B + \log (0.76B^2 a_1'a_2')+\log \left(\frac{8}{3}ad\right).
\end{equation}
Dividing both sides of the above inequality by $a_1'a_2'$ and simplifying, we obtain
$$
b'<0.725(\log b' + 4.2)^2. 
$$
Thus, we get $b'<46.98$.
As $b'=\frac{2s}{a_2'}+0.02$, we deduce that 
$$
s<23.48a_2'=23.48(28\log \beta_3+0.04)<657.44\log(4s^3+2)+1. 
$$
Therefore, we obtain $s<20493$.
\end{proof}

\begin{lemma}\label{lem:euler2}
If $m\equiv n\equiv 0 \pmod{t}$, then $t<22023$. 
\end{lemma}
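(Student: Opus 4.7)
The plan is to mirror the proof of Lemma \ref{lem:euler1} step by step, making the substitutions $(s,l,a,\beta_1,\beta_4,\Lambda_2)\leftrightarrow(t,m,b,\beta_2,\beta_5,\Lambda_3)$ throughout. I would assume for contradiction that $t\ge 22023$; since $t=b+r$ and $b\ge 15$ by Lemma \ref{lem:min_bcd}, this plays the same role as the lower bound $r>10000$ in the previous lemma, i.e.\ it is large enough to absorb the small error terms coming from Lemma \ref{lem:uppbd}. Writing $m=tm_1$ and $n=tn_1$ with positive integers $m_1,n_1$, I would rewrite
$$
\Lambda_3 \;=\; \log\beta_5 \;-\; 2t\log\bigl(\beta_3^{n_1}/\beta_2^{m_1}\bigr)
$$
and apply Laurent's theorem (Theorem \ref{thm:L1}) with $D=4$, $b_1=2t$, $b_2=1$, $\gamma_1=\beta_3^{n_1}/\beta_2^{m_1}$, $\gamma_2=\beta_5$.

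The required height estimates transfer almost verbatim. As in Lemma \ref{lem:euler1}, $\gamma_1$ is an algebraic integer with the four conjugates $\beta_3^{\pm n_1}/\beta_2^{\pm m_1}$, and Lemma \ref{lem:uppbd} together with $t\ge 22023$ give $h(\gamma_1)\le \tfrac{m_1}{2}\log\beta_2+0.001$. The bound $\beta_5\le 2\sqrt{c/b}$ follows from the same computation used for $\beta_4$, using $d>4abc>4c$. The minimal polynomial of $\beta_5$ is obtained from that of $\beta_4$ by interchanging $a$ and $b$, is again independent of $\varepsilon$, and yields $h(\beta_5)\le \tfrac{1}{2}\log(cd)<\log\beta_3$.

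With exactly the same parameter choices as in Lemma \ref{lem:euler1}---$\varrho=61$, $\mu=0.7$, $a_1'=4m_1\log\beta_2+0.07$, $a_2'=28\log\beta_3$, $b'=2t/a_2'+0.02$, $h'=4\log b'+12.6$---the admissibility hypotheses of Theorem \ref{thm:L1} are verified in the same way. Combining the resulting lower bound for $|\Lambda_3|$ with the upper bound $|\Lambda_3|<\tfrac{8}{3}bd\,\beta_2^{-4m}$ of Lemma \ref{lem:uppbd} gives an inequality of the shape $b'<C_1(\log b'+C_2)^2$ and hence an absolute upper bound on $b'$. Translating back to $t$, and using $s<t$ (from $s^2=ac+1<bc+1=t^2$) to bound $z=2st+1<2t^2+1$ and thus $\beta_3<4t^2+2$, one obtains a transcendental inequality in $t$ alone whose numerical solution forces $t<22023$, contradicting the standing assumption. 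The only non-routine point is to verify that the shrinkage $\beta_5\le 2\sqrt{c/b}$ (smaller than $\beta_4\le 2\sqrt{c/a}$ since $b>a$) still leaves $a_2'=28\log\beta_3$ admissible; this is immediate, since $8h(\gamma_2)\le 4\log(cd)$ dominates the now much smaller $\varrho|\log\gamma_2|$, so the inequality $a_2'\ge \varrho|\log\gamma_2|-\log|\gamma_2|+8h(\gamma_2)$ continues to hold.
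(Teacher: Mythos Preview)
Your approach is correct and is essentially the same as the paper's: both apply Laurent's Theorem~\ref{thm:L1} to the linear form $\Lambda_3=\log\beta_5-2t\log(\beta_3^{n_1}/\beta_2^{m_1})$ with the same parameter choices $\varrho=61$, $\mu=0.7$, $a_2'=28\log\beta_3$, etc. The one noteworthy difference is how the ``small'' regime is handled. The paper does not argue by contradiction from $t\ge 22023$; instead it first disposes of $r<145$ directly (since then $t=b+r<r^2+r<21170$), and then runs the Laurent argument under the weaker standing hypothesis $r\ge 145$, explicitly remarking that this smaller threshold forces slightly smaller lower bounds for $a_1',a_2',h',H$ and hence slightly larger $C,C'$, which is why the final bound comes out as $22023$ rather than $20493$. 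Your contradiction hypothesis $t\ge 22023$ in fact forces $r\ge 148$ (from $t\le r^2+r-1$), so your claim that ``exactly the same parameter choices'' as in Lemma~\ref{lem:euler1} go through is justified, and the final transcendental inequality (using $\beta_3<4t^2+2$ via $s<t$, rather than $\beta_3<4s^3+2$ via $t<s^2$) then yields the desired contradiction.
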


\begin{proof}
 The proof is similar to that of Lemma~\ref{lem:euler1}. However we may only assume that $r \geq 145$. Indeed assuming $r<145$ yields $t=r+b<r+r^2<21170$.
 Note that assuming $r\leq 150$ would result in $t\leq 22650$ which already succeeds the bound given in the lemma.

By the assumption of the lemma we have that $m\equiv n\equiv 0 \pmod{t}$, and we may 
write $m=tm_2, n=tn_2$ for some positive integers $m_2,n_2$. We rewrite $\Lambda_3$ into the form
$$
\Lambda_3 = \log\beta_5 - 2t\log \left({\beta_3^{n_2}}/{\beta_2^{m_2}}\right)
$$
and apply Theorem \ref{thm:L1} to this linear form. As the application of Theorem \ref{thm:L1} is technically similar to that in the proof of  Lemma~\ref{lem:euler1}, we omit the details.
We only want to note that the slightly larger upper bound is due to the fact that we only assume that $r\geq 145$. Hence we 
obtain smaller lower bounds for $a'_1,a'_2,h',H$ and so on. Therefore, we obtain slightly larger upper bounds for $C,C'$ and so on resulting in a slightly larger upper bound for $t$. 
\end{proof}

Next, we consider case III

\begin{lemma}\label{lem:euler3}
If $n\equiv -\varepsilon r \pmod{st}$, then we have $r<900154$ and $h<9.6 \cdot 10^{15}$.
\end{lemma}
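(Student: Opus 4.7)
The strategy is to convert the congruence $n\equiv -\varepsilon r\pmod{st}$ into an effective lower bound for $n$ in terms of $r$, and then combine this with the chain of inequalities $h\geq 2m\geq 2n$ coming from Lemmas \ref{lem:hm} and \ref{lem:lmn}, together with the upper bound for $h$ supplied by Proposition \ref{pro:2}.

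The first task is to analyse the residue $n\bmod st$. Since $0<r<st$, the smallest positive representative of $-\varepsilon r\bmod st$ is either $st-r$ (when $\varepsilon=1$) or $r$ (when $\varepsilon=-1$). The borderline possibility $n=r$ must be excluded first. I would do this by proving an analogue of Lemma \ref{lem:mdb}, namely $n\geq\tfrac{\sqrt{17}-1}{2}\sqrt{d/c}$, coming from the intersection $W^{(a,d)}_{2l}=W^{(c,d)}_{2n}$ via a congruence of the shape $al^2+\varepsilon xl\equiv cn^2+\varepsilon zn\pmod{4d}$ analogous to \eqref{eq:ab4d}. Plugging $n=r$ and using $d>4abc=4(r^2-1)c$ forces $(r^2-1)(\sqrt{17}-1)^2<4r^2$, which fails for every $r\geq 2$; consequently $n\geq st-r$ holds in every remaining case.

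The second task is to turn $n\geq st-r$ into a quadratic lower bound for $n$ in $r$. For Euler triples $s=a+r$, $t=b+r$, $ab=r^2-1$, and so
\begin{equation*}
st-r=2r^2+r(a+b-1)-1\geq 2r^2+2r\sqrt{r^2-1}-r-1\geq 4r^2-O(r),
\end{equation*}
where the first inequality uses the elementary bound $a+b\geq 2\sqrt{ab}=2\sqrt{r^2-1}$. Combining with $h\geq 2n\geq 2(st-r)$ yields the key inequality $h\gtrsim 8r^2$.

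With this quadratic lower bound for $h$ in hand I would invoke the sharp upper bound $h<2.8376\cdot 10^{10}\log\alpha_2\log c$ from Proposition \ref{pro:2}, handling the range $c\leq 2\cdot 10^8$ separately (where $c=a+b+2r$ directly forces $r<10^8$ and one can fall back on Proposition \ref{pro:1}). For Euler triples $\alpha_2=s+\sqrt{s^2-1}<2(a+r)$ and $c\leq r^2+2r$, so both $\log\alpha_2$ and $\log c$ are $O(\log r)$. Substituting into $8r^2\lesssim h<2.8376\cdot 10^{10}\log\alpha_2\log c$ and iterating a couple of times in the spirit of Section \ref{sec:6} produces $r<900154$, and substituting this back into the bounds from Section \ref{sec:6} then yields $h<9.6\cdot 10^{15}$.

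The main obstacle is pinning down the precise numerical constants. In particular, the AM--GM step for $a+b$ should be checked not to be too lossy in the range where $a$ is close to its upper bound $a<r/\sqrt{3}$ forced by $b>3a$ (Lemma \ref{lem:b3a}), and the iteration of Proposition \ref{pro:2} must be run carefully enough to deliver the stated constant $900154$ rather than something substantially larger. The preliminary exclusion of the boundary case $n=r$ also requires proving the announced analogue of Lemma \ref{lem:mdb}, which follows the same Pell--congruence pattern used in Section \ref{sec:5} but for the intersection $W^{(a,d)}_{2l}=W^{(c,d)}_{2n}$, and needs a careful verification that $b>3a$ (together with $l\leq 2n$) rules out the equality case $al^2=cn^2$.
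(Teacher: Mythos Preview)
Your overall strategy is the right one and matches the paper: exclude the small residue, deduce $n\geq st-r$, feed this into $h\geq 2m\geq 2n$, and confront the resulting quadratic lower bound in $r$ with Proposition~\ref{pro:2}.

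Two points where the paper is simpler than your plan.  First, you do not need a new analogue of Lemma~\ref{lem:mdb} for the pair $(a,c)$ to rule out $n=r$.  The paper just chains existing lemmas: from Lemma~\ref{lem:lmn} one has $m\leq 2n$, so $n\geq m/2$, and then Lemma~\ref{lem:mdb} together with $d>4abc$ gives
\[
n\;\geq\;\frac{m}{2}\;\geq\;\frac{\sqrt{17}-1}{4}\sqrt{\frac{d}{b}}\;\geq\;\frac{\sqrt{17}-1}{2}\sqrt{ac}\;>\;s\;>\;r,
\]
which kills $n=r$ in one line.  Your proposed detour through a new congruence $al^2+\varepsilon xl\equiv cn^2+\varepsilon zn\pmod{4d}$ would work (with $c/a>4+2\sqrt{3}$ in an Euler triple and $l\leq 2n$ one does get the needed contradiction), but it is unnecessary extra work.

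Second, instead of AM--GM on $a+b$, the paper exploits the Euler-triple identity $z=cr+st=2st+1$, i.e.\ $st=cr-1$, to obtain the clean bound $n\geq st-r\geq c(r-1)$.  This keeps $c$ explicitly in the lower bound, which dovetails with the factor $\log c$ in Proposition~\ref{pro:2}: one arrives at an inequality for the single quantity $c(r-1)$, solves it to get $c(r-1)<3.233\cdot 10^{12}$, and only then uses $c>3.99r$ to extract $r<900154$.  Your AM--GM route gives essentially the same numerics but is slightly less tidy.
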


\begin{proof} By Lemma~\ref{lem:lmn} and Lemma~\ref{lem:mdb}, we have 
$$
n\ge \frac m2\geq \frac{1}{2}\cdot \frac{\sqrt{17}-1}{2}\cdot \sqrt{\frac{d}{b}}\ge \frac{\sqrt{17}-1}{2}\sqrt{ac}>s>r.
$$
If $n\equiv -\varepsilon r \pmod{st}$, then $n+\varepsilon r \ge st$, hence 
$n\ge st -r\ge c(r-1)$. On the other hand, we have by Lemma~\ref{lem:lmn} and Lemma~\ref{lem:hm} that $h\ge 2n$ and therefore, we get 
$$h\ge 2c(r-1).$$

In view of the statement of the lemma we may assume that $r>900000$.  This implies that $c=a+b+2r>10^6$.  Due to Proposition~\ref{pro:2}, we have
$$
h<2.8376\cdot 10^{10} \log (s+\sqrt{ac}) \log c.
$$
Combining the upper and lower bound for $h$, we have 
$$
c(r-1)\le 1.4188 \cdot 10^{10} \log(2\sqrt{c(r-1)})\log(c(r-1)/900000).
$$
This implies that
$$c(r-1)<3.233\cdot 10^{12}.$$
Since $c=a+b+2r\ge 2\sqrt{ab}+ 2r >3.99r$ we have
$$3.99r(r-1)< 3.233\cdot 10^{12}$$
and therefore, we obtain $r<900154$.

We are left to compute the upper bound for $h$. If $c>2\cdot 10^8$, then by Proposition \ref{pro:2} we have
\begin{multline*}
h<2.8376\cdot 10^{10} \log (s+\sqrt{ac}) \log c<2.8376\cdot 10^{10}\log(2(a+r))\log(a+b+2r)\\<2.8376\cdot 10^{10}\log(4r-2)\log(1+r^2+2r)<1.2\cdot 10^{13}.
\end{multline*}
If $c\le 2\cdot 10^8$, then we get the following inequalities coming from inequality \eqref{eq:nd}
$$
\frac{h}{\log(38.92h)}< 1.232\cdot 10^{12}\cdot \log (2\sqrt{ac+1})\cdot \log c<4.51 \cdot 10^{14}
$$ 
and so $h<1.9\cdot 10^{16}$. 
\end{proof}

Now, assume that $\{a,b,c,d,e\}$ is a Diophantine quintuples such that $a<b<c<d<e$ and $\{a,b,c\}$ is an Euler triple. Then, by Lemma~\ref{lem:lmnst} and Lemmas~\ref{lem:euler1}--\ref{lem:euler3}, we have 
$$
r<900154,\quad \text{and} \quad h<1.9\cdot 10^{16}. 
$$
Note that the upper bounds obtained from Lemma~\ref{lem:euler1} and Lemma~\ref{lem:euler2} are much smaller than the bound obtained from Lemma~\ref{lem:euler3}.

In order to deal with the remaining cases, we
will use a Diophantine approximation algorithm called the
Baker-Davenport reduction method. The following lemma is a slight
modification of the original version of the Baker-Davenport reduction
method (see \cite[Lemma 5a]{Dujella-Pethoe:1998}).

\begin{lemma}\label{lem:Baker-Davenport}
Assume that $M$ is a positive integer. Let $p/q$ be the convergent
of the continued fraction expansion of a real number $\kappa$ such that $q > 6M$ and let
$$\eta=\left\| \mu q \right\| - M \cdot \| \kappa q\|,$$
where $\parallel\cdot\parallel$ denotes the distance from the
nearest integer. If $\eta > 0$, then the
inequality
$$0 < J\kappa -K+\mu < AB^{-J}$$
has no solutions in integers $J$ and $K$ with
$$
\frac{\log\left(Aq/\eta\right)}{\log B}\leq J \leq M.
$$
\end{lemma}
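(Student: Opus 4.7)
The plan is to run the classical Baker--Davenport reduction argument. Suppose for contradiction that there are integers $J,K$ with $\log(Aq/\eta)/\log B \leq J \leq M$ satisfying $0 < J\kappa - K + \mu < AB^{-J}$. Multiplying through by $q$ yields
$$0 < Jq\kappa - Kq + \mu q < qAB^{-J}.$$
Since $p/q$ is a convergent of $\kappa$, the nearest integer to $q\kappa$ is $p$, so I would write $q\kappa = p + \delta$ with $|\delta| = \|q\kappa\|$. Similarly, letting $P$ denote the integer nearest to $\mu q$, I write $\mu q = P + m$ with $|m| = \|\mu q\|$. Setting $I := Jp - Kq + P \in \mathbb{Z}$, the above inequality rewrites as
$$0 < I + (J\delta + m) < qAB^{-J}.$$

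The next step is to squeeze the quantity $J\delta + m$ from both sides. Since $|J\delta| \leq M\|q\kappa\|$ and $|m| = \|\mu q\|$, a reverse triangle inequality combined with the hypothesis $\eta > 0$ gives
$$|J\delta + m| \geq \|\mu q\| - M\|q\kappa\| = \eta.$$
For the upper bound I use $q > 6M$, which gives $|J\delta| \leq M\|q\kappa\| < M/q < 1/6$, together with $|m| \leq 1/2$, so that $|J\delta + m| < 2/3 < 1$.

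The heart of the argument is to pin down the integer $I$. If $I \leq -1$ then $I + (J\delta + m) < -1 + 2/3 < 0$, contradicting positivity. If $I \geq 1$ then $I + (J\delta + m) > 1 - 2/3 = 1/3$; combining this with the sharper estimate $|J\delta + m| \leq \|\mu q\| + M\|q\kappa\| = \eta + 2M\|q\kappa\| < \eta + 1/3$ and the fact that $J \geq \log(Aq/\eta)/\log B$ forces $qAB^{-J} \leq \eta$, one arrives at the incompatibility $qAB^{-J} > 2/3 - \eta$, which together with $qAB^{-J} \leq \eta$ is ruled out. Hence $I = 0$, and the inequality reads $0 < J\delta + m < qAB^{-J}$. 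Positivity together with $|J\delta + m| \geq \eta$ forces $J\delta + m \geq \eta$, so $\eta < qAB^{-J}$, whence $B^J < qA/\eta$, i.e.\ $J < \log(Aq/\eta)/\log B$, contradicting the hypothesis.

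The main obstacle is the case analysis ruling out $I \neq 0$: all the force of the assumptions ``$p/q$ is a convergent with $q > 6M$'' and ``$\eta > 0$'' must be exploited to squeeze $|J\delta + m|$ into an interval short enough that the integer part $I$ is pinned down to zero. Once $I = 0$ is established, the remainder of the argument is a direct rearrangement of the two-sided bound.
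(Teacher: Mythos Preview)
Your case analysis on $I$ has a genuine gap when $I=1$. From $I+(J\delta+m)<qAB^{-J}\le \eta$ and your ``sharper estimate'' $|J\delta+m|<\eta+\tfrac13$ you deduce $qAB^{-J}>\tfrac23-\eta$; combined with $qAB^{-J}\le\eta$ this yields only $\eta>\tfrac13$, which is \emph{not} a contradiction, since $\eta=\|\mu q\|-M\|\kappa q\|$ can take any value in $(0,\tfrac12]$. So the case $I=1$ with $\tfrac13<\eta\le\tfrac12$ survives your argument.

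The fix is to drop the case analysis entirely. You do not need to introduce the nearest integer $P$ to $\mu q$; what matters is only that $Kq-Jp$ is \emph{some} integer. From $0<J\kappa-K+\mu<AB^{-J}$, multiply by $q$ and write $Jq\kappa=Jp+J\delta$ with $|\delta|=\|q\kappa\|$ as you did. Then
\[
\bigl|\mu q-(Kq-Jp)\bigr|\le \bigl|Jq\kappa-Kq+\mu q\bigr|+|J\delta|<qAB^{-J}+M\|q\kappa\|.
\]
Since $Kq-Jp\in\ZZ$, the left-hand side is at least $\|\mu q\|$, giving $\eta=\|\mu q\|-M\|q\kappa\|<qAB^{-J}$ and hence $J<\log(Aq/\eta)/\log B$, the desired contradiction. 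Note that with this approach the hypothesis $q>6M$ is not even needed for the logical argument (it is a practical condition that makes $\eta>0$ attainable). The paper itself does not supply a proof of this lemma but refers to Dujella--Peth\H{o}, whose proof is essentially the one-line triangle-inequality argument just described.
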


We apply Lemma~\ref{lem:Baker-Davenport} to 
$$
\Lambda_1 = 2h \log(r+\sqrt{ab})- 2j \log(s+\sqrt{ac}) + \log \left(\frac{\sqrt{c}(\sqrt{a}+\sqrt{b})}{\sqrt{b}(\sqrt{a}+\sqrt{c})}\right).
$$
with $s=a+r$, $c=a+b+2r$ and
\begin{gather*}
\kappa=\frac{\log(r+\sqrt{ab})}{\log(s+\sqrt{ac})},\quad
\mu=\frac{\log \left(\frac{\sqrt{c}(\sqrt{a}+\sqrt{b})}{\sqrt{b}(\sqrt{a}+\sqrt{c})}\right)}{\log (s+\sqrt{ac})},\\
 A=\frac{1}{\log(s+\sqrt{ac})},\quad B=(r+\sqrt{ab})^2
\end{gather*}
and $J= 2h$, $M =1.9\cdot 10^{16}$. 
We ran a GP program to check all $58258307$ pairs $(a,b)$ such that $2\le r\le 900153$ and obtained $J\le15$ in each case.
This contradicts the fact that $J=2h\ge 4c(r-1)\ge 48$. It took $7$ hours and $2$ minutes to run the program on a MacBook Pro 
with an i7 4960hq CPU and 16G memory. Summarizing our results we obtain:

\begin{theorem}\label{thm:euler}
An Euler triple $\{a,b,a+b+2\sqrt{ab+1}\}$ cannot be extended to a Diophantine quintuple.
\end{theorem}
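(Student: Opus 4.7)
The plan is to derive a contradiction from the assumption that an Euler triple $\{a, b, c\}$ with $c = a + b + 2r$ extends to a Diophantine quintuple $\{a, b, c, d, e\}$. Lemma~\ref{lem:lmnst} forces one of three congruences (I), (II), (III) to hold; since in the Euler case $s = a + r$ and $t = b + r$, the small upper bounds $s < 20493$ and $t < 22023$ from Lemmas~\ref{lem:euler1} and~\ref{lem:euler2} are strictly stronger than the bound $r < 900154$ obtained in case (III) by Lemma~\ref{lem:euler3}. Consolidating the three cases, I would record the uniform range $2 \le r \le 900153$. A similar case analysis, using Proposition~\ref{pro:2} when $c > 2\cdot 10^{8}$ and inequality~\eqref{eq:nd} otherwise, then produces the uniform bound $h < 1.9 \cdot 10^{16}$.

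With these uniform bounds in hand, the next step is to apply the Baker--Davenport reduction (Lemma~\ref{lem:Baker-Davenport}) to the linear form $\Lambda_1$ of Section~\ref{sec:6}. The natural choices are
\[
\kappa = \frac{\log(r + \sqrt{ab})}{\log(s + \sqrt{ac})}, \qquad \mu = \frac{1}{\log(s + \sqrt{ac})} \log\!\left(\frac{\sqrt{c}(\sqrt{a} + \sqrt{b})}{\sqrt{b}(\sqrt{a} + \sqrt{c})}\right),
\]
together with $A = 1/\log(s + \sqrt{ac})$, $B = (r + \sqrt{ab})^{2}$, $J = 2h$, and $M = 1.9 \cdot 10^{16}$. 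In the Euler setting $s = a + r$ and $c = a + b + 2r$, so $\Lambda_1$ is determined entirely by the pair $(a, b)$, equivalently by $(a, r)$, and the reduction can be carried out one pair at a time.

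The conclusion will come from a computer enumeration of all admissible pairs $(a, b)$ with $2 \le r \le 900153$, roughly $5.8 \cdot 10^{7}$ cases. For each pair I would compute a convergent $p/q$ of the continued fraction expansion of $\kappa$ with $q > 6M$, verify $\eta = \|\mu q\| - M\|\kappa q\| > 0$, and extract from Lemma~\ref{lem:Baker-Davenport} a small upper bound on $J = 2h$ (heuristically of size $\le 15$). This must contradict an unconditional lower bound: in case (III) one has $2h \ge 4c(r - 1) \ge 48$ via Lemma~\ref{lem:euler3}, and in cases (I) and (II) the bound $h > 6.2462\sqrt{ac}$ of Lemma~\ref{lem:hbd} combined with $ac \ge 24$ gives $2h > 60$, so either way the inequality from the reduction is violated.

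The hard part will be purely computational. Tens of millions of continued fraction computations have to succeed uniformly, so those isolated pairs where the first convergent with $q > 6M$ happens to give $\eta \le 0$ must be treated by passing to the next convergent; the numerical precision used for $\kappa$ and $\mu$ must be sufficient even for the largest admissible $r$; and the enumeration has to be organized (realistically in \texttt{PARI/GP}) so that it terminates in reasonable wall-clock time. Care is also needed to ensure that parametrizing Diophantine pairs by $r$ captures every pair exactly once, so that no possibility is silently missed.
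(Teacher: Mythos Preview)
Your proposal is correct and follows essentially the same route as the paper: consolidate the three cases of Lemma~\ref{lem:lmnst} via Lemmas~\ref{lem:euler1}--\ref{lem:euler3} into the uniform bounds $r<900154$ and $h<1.9\cdot 10^{16}$, then apply Baker--Davenport reduction (Lemma~\ref{lem:Baker-Davenport}) to $\Lambda_1$ over all admissible pairs $(a,b)$ with $2\le r\le 900153$ to obtain $J\le 15$, a contradiction. Your contradiction step is actually slightly more careful than the paper's, which only invokes the case~(III) bound $J=2h\ge 4c(r-1)$ without explicitly covering cases~(I) and~(II); your appeal to Lemma~\ref{lem:hbd} for those cases fills that small expository gap.
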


\section{Non-Euler triples} \label{sec:8}
In this section, we will deal with the non-Euler triples. We will consider two cases: 
the case that the degree is one and the case that the degree is greater than one.
We start with the case that the degree is one.

\begin{theorem}\label{thm:eulerplus}
A Diophantine triple $\{a,b,c\}$ cannot be extended to a Diophantine quintuple if $\deg(a,b,c)=1$.  
\end{theorem}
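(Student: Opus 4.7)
The plan is to exploit the rigid structure imposed by the degree-$1$ hypothesis together with the bounds from Section~\ref{sec:6}. By definition of degree, $\partial(\{a,b,c\}) = \{a,b,d_{-}(a,b,c)\}$ is an Euler triple, so $d_{-}(a,b,c) = a+b+2r$, and Proposition~\ref{pro:2-2} then gives $c = d_{+}(a,b,a+b+2r)$. Since $au+1 = (a+r)^2$ and $bu+1 = (b+r)^2$ for $u = a+b+2r$, the defining formula for $d_{+}$ yields the closed-form expressions
\[
c = 4r(a+r)(b+r) = 8r^3 + 4r^2(a+b) - 4r, \quad \sqrt{ac+1} = 2a(b+r)+1, \quad \sqrt{bc+1} = 2b(a+r)+1.
\]
In particular $c > 8r^3$, and the entire triple is determined by the single pair $(a,r)$ via $b = (r^2-1)/a$.

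I would then insert this structural bound into the machinery of Section~\ref{sec:6}. Proposition~\ref{pro:2} gives $ac < 6.77 \cdot 10^{25}$ and the sharp inequality $h < 2.8376 \cdot 10^{10} \log\alpha_2 \log c$ (valid for $c > 2\cdot 10^8$), while Lemma~\ref{lem:hbd} gives $h > 6.2462\sqrt{ac}$. Combining $c > 8r^3$ with the upper bound on $ac$ already forces $ar^3 < 8.46 \cdot 10^{24}$; iterating the sharpened bound~\eqref{eq:hh2} against Lemma~\ref{lem:hbd} (together with $r > a\sqrt{3}$, a consequence of $b > 3a$ from Lemma~\ref{lem:b3a}) is expected to sharpen this to the explicit numerical bounds $r < 2\,315\,167$ and $a < 93\,596$ announced in the outline of Section~\ref{sec:2}. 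The degree-$1$ structure is essential here: without the strong lower bound $c > 8r^3$ one only obtains $c > 4ab$, which is too weak to separate $r$ and $a$.

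Finally, for each of the resulting finitely many pairs $(a,r)$, hence a concrete degree-$1$ triple $(a,b,c)$, I would apply the Baker--Davenport reduction (Lemma~\ref{lem:Baker-Davenport}) to the linear form $\Lambda_1$ of~\eqref{eq:Lamb1}, exactly as in the closing computation of Section~\ref{sec:7} for Theorem~\ref{thm:euler}: take $J = 2h$, $M = 5.136 \cdot 10^{13}$ from~\eqref{eq:hh2}, compute a convergent of $\kappa = \log(r+\sqrt{ab})/\log(s+\sqrt{ac})$ with denominator $q > 6M$, verify $\eta > 0$, and deduce from Lemma~\ref{lem:Baker-Davenport} an upper bound on $2h$ so small that it contradicts $h > 6.2462\sqrt{ac}$. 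The main obstacle is computational: the number of admissible pairs $(a,b)$ is of the same order of magnitude as the $58\,258\,307$-pair sweep at the end of Section~\ref{sec:7}, so the delicate point is to push the bounds on $r$ and $a$ as low as possible by carefully iterating~\eqref{eq:hh2} with Lemma~\ref{lem:hbd}, so that the continued-fraction verification completes in a feasible amount of time on a standard workstation.
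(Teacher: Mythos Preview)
Your overall strategy matches the paper's: use the degree-$1$ structure to parametrize $\{a,b,c\}$ by the pair $(a,r)$, combine a structural lower bound on $c$ with Proposition~\ref{pro:2} to bound $r$ and $a$, and then sweep all remaining pairs with Baker--Davenport reduction on $\Lambda_1$. There are, however, two genuine gaps.

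First, the claim $d_{-}(a,b,c)=a+b+2r$ is only half the story. The Euler-triple property of $\{a,b,d_{-1}\}$ says that the \emph{maximum} of the three equals the sum of the other two plus twice the root; you are tacitly assuming that $d_{-1}$ is this maximum. When $d_{-1}<b$ (as for the degree-$1$ triple $\{1,8,120\}$, where $d_{-1}=3$), the maximum is $b$, and one obtains instead $d_{-1}=a+b-2r$ and $c=4r(r-a)(b-r)$, which can be well below your $8r^3$. The paper handles both signs at once via $d_{-1}=a+b\pm 2r$, $c=4r(r\pm a)(b\pm r)$.

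Second, the bound $c>8r^3$ together with $b>3a$ is too weak to yield the announced $r<2\,315\,167$ and $a<93\,596$: from $8ar^3<ac<6.77\cdot 10^{25}$ you only get $r\lesssim 2\cdot 10^{8}$, and no amount of iterating~\eqref{eq:hh2} against Lemma~\ref{lem:hbd} fixes this. The paper instead invokes the full strength of Lemma~\ref{lem:b3a} for the non-Euler triple $\{a,b,c\}$, namely $b>\max\{24a,\,2a^{3/2}\}$. From $b>24a$ one gets $d_{-1}\ge a+b-2r>0.59b$ (for $b>10^4$, covering both signs simultaneously), hence $ac>4a^2b\,d_{-1}>2.36(ab)^2$, which gives $ab<5.36\cdot 10^{12}$ and $r\le 2\,315\,167$ directly; from $b>2a^{3/2}$ one then gets $a<(r^2/2)^{2/5}<93\,596$. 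A smaller point: your choice $M=5.136\cdot 10^{13}$ comes from~\eqref{eq:hh2}, which requires $c>2\cdot 10^{8}$ and therefore does not cover the small degree-$1$ triples; the paper takes the larger value $M=1.9\cdot 10^{16}$ to be safe.
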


\begin{proof}
Assume that $a<b<c$. If $\deg(a,b,c)=1$, then the triple $\{d_{-1},a, b\}$ is an Euler triple, where
$$
d_{-1} = d_{-}(a,b,c)
$$
 Moreover the quadruple $\{d_{-1},a,b,c\}$ is regular due to Proposition \ref{pro:2-2}. Therefore we have 
$$
d_{-1} = a+b \pm 2r
$$
and 
$$
c= d_{+}(a,d_{-1},b) = 4r(r\pm a)(b\pm r). 
$$

We assume that $b>10000$. Then, $d_{-1} \ge a+b - 2\sqrt{\frac{b^2}{24}+1} > 0.59b$ and together with the observation that $c>4abd_{-1}$, we have 
$$
2.36 (ab)^2<4a^2bd_{-1}<ac < 6.77 \cdot 10^{25}. 
$$ 
Hence, we get $ab<5.36\cdot 10^{12}$ and $r\le 2315167$. Moreover, we have 
$a<\left(\frac{r^2}{2}\right)^{\frac{2}{5}} <93596$ since $b>\max\{24a,2a^{3/2}\}$ due to Lemma \ref{lem:b3a}. 

We apply Lemma~\ref{lem:Baker-Davenport} to $\Lambda_1$ and check $109748916$ pairs $(a,r)$ such that
 $$
b=(r^2-1)/{a},\quad c= 4r(r\pm a)(b\pm r), 
$$
and $\kappa, \mu, A, B$ are taken as in the previous section. Moreover, we choose $J=2h$ and $M=1.9\cdot 10^{16}$.
The running time of the GP program is less than 16 hours. In all $219497932$ cases, we have $J\le 15$. This contradicts the fact that $J=2h>10\sqrt{ac}>20\sqrt{2}$.

\end{proof}

Now we have to deal with the case that the degree of the triple is greater than one. In this case we prove the following theorem.

\begin{theorem}\label{thm:deg2}
A Diophantine triple $\{a,b,c\}$ cannot be extended to a Diophantine quintuple if $\deg(a,b,c)\ge2$.  
\end{theorem}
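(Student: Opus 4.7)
The plan follows the outline in Section~\ref{sec:2} and mirrors the strategy used for Theorems~\ref{thm:euler} and~\ref{thm:eulerplus}. Suppose for contradiction that $\{a,b,c\}$ satisfies $\deg(a,b,c)\geq 2$ and extends to a Diophantine quintuple $\{a,b,c,d,e\}$ with $a<b<c<d<e$. Set $\{a'',b'',c''\}:=\partial_{-2}(\{a,b,c\})$, which is a well-defined Diophantine triple by the hypothesis on the degree. From the proof of Proposition~\ref{pro:2-3}, each application of $\partial$ shrinks the product of the elements of the triple by a factor of at least $12$, so
$$a''b''c''\leq \frac{abc}{144}.$$
Combining the bound $d<1.83\cdot 10^{52}$ from Proposition~\ref{pro:2} with $d=d_+>4abc$ from Lemma~\ref{lem:d+ieq} gives $abc<4.58\cdot 10^{51}$, and hence $a''b''c''<3.18\cdot 10^{49}$.

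The next step is to enumerate all candidate small triples $\{a'',b'',c''\}$ subject to this product bound together with the structural constraints on Diophantine triples: by Lemma~\ref{lem:Jones} either $c''=a''+b''+2\sqrt{a''b''+1}$ (the Euler case, arising precisely when $\deg(a,b,c)=2$), or $c''>4a''b''$; in the latter case one applies $\partial$ further until an Euler triple is reached, the product shrinking by $12$ at each step while the branching factor for reconstruction is at most $3$, and Proposition~\ref{pro:2-3} bounds the depth by $\log(abc)/\log 12$, keeping the total enumeration finite. For each small triple produced, reconstruct all (at most nine) candidate degree-$2$ descendants $\{a,b,c\}$ by iterating the inverse $\partial$ operation: form $\{a'',b'',c'',d_+(a'',b'',c'')\}$, delete one of $a'',b'',c''$ (three choices) to obtain an intermediate triple, and repeat the deletion on this intermediate triple (another three choices).

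For each candidate triple $\{a,b,c\}$ so produced, apply the Baker--Davenport reduction of Lemma~\ref{lem:Baker-Davenport} to the linear form $\Lambda_1$ of Section~\ref{sec:6}, with $M=5.136\cdot 10^{13}$ (using Proposition~\ref{pro:2} when $c>2\cdot 10^{8}$, and the weaker bound of Proposition~\ref{pro:1} otherwise). Analogously to the proofs of Theorems~\ref{thm:euler} and~\ref{thm:eulerplus}, the reduction is expected to return $2h\le O(10)$, which contradicts the lower bound $h>6.246\sqrt{ac}\geq 6.246\sqrt{24}$ from Lemma~\ref{lem:hbd} combined with Lemma~\ref{lem:min_bcd}. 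This contradiction completes the argument.

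The main obstacle is computational rather than theoretical: even after reducing to triples with $a''b''c''<3.18\cdot 10^{49}$, the element-wise bounds on $a''$, $b''$, $c''$ are not immediately sharp, so one must split carefully on the dichotomy from Lemma~\ref{lem:Jones} (Euler versus $c''>4a''b''$) and, at each level of the inverse $\partial$ tree, on which of the three possible deletions was performed. The success of the strategy thus hinges on the sharpness of the bounds established in Proposition~\ref{pro:2}, together with disciplined bookkeeping of the ancestor tree induced by $\partial$, so that the case-by-case Baker--Davenport verification remains tractable, in the spirit of the $16$-hour computation used in Theorem~\ref{thm:eulerplus}.
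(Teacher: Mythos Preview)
Your overall strategy---descend via $\partial$, enumerate the resulting small triples, lift back to $\{a,b,c\}$, then run Baker--Davenport on $\Lambda_1$---is indeed the paper's shape, but your quantitative control is far too weak to be executable, and this is a genuine gap rather than a mere implementation detail.

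You bound the product: $a''b''c''\le abc/144<3.18\cdot 10^{49}$. Enumerating Diophantine triples subject only to a product bound of this size is hopeless. For Euler triples alone, $a''b''c''\asymp 2(a''b'')^{3/2}$, so your bound only gives $r''=\sqrt{a''b''+1}\lesssim 10^{17}$, i.e.\ on the order of $10^{17}$ Diophantine pairs to loop over---and your proposed further descent does not help, since the $3$-fold branching in the reverse direction outpaces the $12^{1/3}\approx 2.29$ shrinkage of the root count. By contrast, the paper never works with the bound on $abc$; it uses the much sharper constraint $ac<6.77\cdot 10^{25}$ from Proposition~\ref{pro:2} together with the crucial structural fact that $a$ (the smallest element) survives every application of $\partial$. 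Writing $d_{-1}=d_-(a,b,c)$, $d_{-2}=d_-(\partial(\{a,b,c\}))$, the paper splits according to where $c$ lies in $(4ab,\,180.45\,b^3/a)$ (five subintervals) and in each subcase extracts a bound of the form $a\cdot d_{-k}<C$ with $C$ between roughly $10^5$ and $3\cdot 10^{8}$; equivalently $r_{(a,d_{-k})}=\sqrt{ad_{-k}+1}\le 16023$ in the worst case. The missing third element ($b$ or $d_{-1}$) is then recovered by solving the Pell equation $\mathcal{A}\mathcal{V}^2-\mathcal{B}\mathcal{U}^2=\mathcal{A}-\mathcal{B}$ with $(\mathcal{A},\mathcal{B})\in\{(a,d_{-k}),(d_{-k},a)\}$ and bounded $\mathcal{U}_q$, which keeps the total number of candidate triples to a few million, each dispatched by Lemma~\ref{lem:Baker-Davenport} in under twenty minutes per case.

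So the missing idea in your write-up is twofold: first, use the $ac$ bound rather than the $d$ (or $abc$) bound; second, track which element is largest after each $\partial$-step so as to obtain a small bound on $a\cdot d_{-k}$, and then reconstruct the triple via Pell equations rather than by brute enumeration. Without these ingredients the computation you describe is not feasible, and the proof as stated does not go through.
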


\begin{proof}
Let us consider a Diophantine triple $\{a,b,c\}$ with $\deg(a,b,c)\ge 2$. Using Proposition \ref{pro:2}, we may assume that 
$ac<6.77 \cdot 10^{25}$. Since $\deg(a,b,c)\ge 2$, there exist positive integers $d_{-1}$ and $d_{-2}$ satisfying 
$$
d_{-1} = d_{-}(a,b,c),\quad d_{-2} = d_{-}(a,b,d_{-1}). 
$$ 
By Lemma~\ref{lem:Jones}, since $\{a,b,c\}$ is not an Euler triple we have $c>a+b+2\sqrt{ab+1}$ and therefore $c>4ab$.
Furthermore, we have $ac<0.927b^3$ due to Lemma~\ref{lem:acb}. These observations imply
$$
4ab<c<180.45\frac{b^3}{a}. 
$$ 
For the remainder of the proof, we split up the interval $\left(4ab,180.45\frac{b^3}{a}\right)$ into five subintervals:
$$
c\in \left(4ab, 4a^{\frac{1}{2}}b^{\frac{3}{2}}\right]\cup \left(4a^{\frac{1}{2}}b^{\frac{3}{2}},4ab^2\right]\cup \left(4ab^2,4ab^{\frac{5}{2}}\right]\cup \left(4a^{\frac{3}{2}}b^{\frac{5}{2}}, {4a^2b^3}\right] \cup \left({4a^2b^3}, {180.45b^3}/{a}\right). 
$$

Note that the last  interval is not empty if and only if $1\le a \le 3$. 

\vspace{5mm}

\noindent $\bullet$ \textbf{Case I:} $4ab<c\le 4a^{\frac{1}{2}}b^{\frac{3}{2}}.$  
Since $c=d_{+}(a,b,d_{-1})$, we have that $c>4abd_{-1}$ and therefore, we obtain
$$
ad_{-1}<\frac{c}{4b}<(ab)^{\frac{1}{2}},
$$
and in particular we have that $ab>(ad_{-1})^2$. Since $c>4abd_{-1}$ we get $ac>4(ab)(ad_{-1})$ and moreover 
$$
ad_{-1}<\left(\frac{ac}{4}\right)^{\frac{1}{3}}<\left(\frac{6.77\cdot 10^{25}}4\right)^{\frac{1}{3}}< 256749472.
$$
 
Put $r_{(a,d_{-1})} = \sqrt{ad_{-1}+1}$. Since $ad_{-1}+1$ is a perfect square, $r_{(a,d_{-1})}$ is a positive integer with $2\le r_{(a,d_{-1})}\le 16023$.
Using a short GP program, we see that there are 1081908 pairs $(a,d_{-1})$ to be checked in this range. Note that $\{a,d_{-1},b\}$ is
a Diophantine triple. For a fixed pair $(a,d_{-1})$, there exist positive integers $U=r= \sqrt{ab+1}$ and $V=\sqrt{bd_{-1}+1}$ such
that $b=\frac{V^2-1}{d_{-1}}=\frac{U^2-1}{a}$ and $\max\{U,V\}\le b^{\frac{1}{2}}$. Indeed, we have 
$$4a^2b<a \cdot 4ab<ac<6.77\cdot 10^{25}$$
and therefore we may assume that $\max\{U,V\}\le 4.12\cdot 10^{12}$.
  
In order to find all possible values of $b$, we consider the Pell equation   
\begin{equation}\label{eq:pell}
\mathcal{A}\mathcal{V}^2 - \mathcal{B}\mathcal{U}^2 = \mathcal{A}-\mathcal{B}, 
\end{equation}
where $\mathcal{AB} + 1= \mathcal{R}^2$, $0<\mathcal{A,B,R}\in \ZZ$ and $\mathcal{A} < \mathcal{B}$.
By Lemma~\ref{lem:boundsx_0y_0}, all positive integer solutions to the above Pell equation can be determined by 
$$
\mathcal{V}\sqrt{\mathcal{A}} + \mathcal{U}\sqrt{\mathcal{B}}=\mathcal{V}_q\sqrt{\mathcal{A}} + \mathcal{U}_q\sqrt{\mathcal{B}} =
(\mathcal{V}_0\sqrt{\mathcal{A}}+\mathcal{U}_0\sqrt{\mathcal{B}})(\mathcal{R}+ \sqrt{\mathcal{AB}})^q, \quad q\ge 0,
$$
where $(\mathcal{V}_0,\mathcal{U}_0)$ is a fundamental solution to Pell equation \eqref{eq:pell}, hence satisfies
$$
0\le|\mathcal{V}_0|\le \sqrt{\frac{1}{2}\mathcal{A}(\mathcal{B}-\mathcal{A})(\mathcal{R}-1)},\quad 0<\mathcal{U}_0\le \sqrt{\frac{\mathcal{A}(\mathcal{B}-\mathcal{A})}{2(\mathcal{R}-1)}}.
$$

Our program runs over all $r_{(a,d_{-1})}$ in the range from $2$ to $16023$. For each $\mathcal{R}= r_{(a,d_{-1})}$,
we consider the divisors $d'$ of $ \mathcal{R}^2-1$ with $1\le d'\le \mathcal{R}$ and put $\mathcal{A}=d'$, $\mathcal{B}=(\mathcal{R}^2-1)/\mathcal{A}$.  For  each pair $(\mathcal{A},\mathcal{B})$, we find all possible fundamental solutions $(\mathcal{V}_0,\mathcal{U}_0)$ to 
equation~\eqref{eq:pell} and consider the corresponding sequences $\mathcal{U}_q$. Notice that not all solutions $\mathcal{U}$ of \eqref{eq:pell}
satisfy $\mathcal{A}|(\mathcal{U}^2-1)$. If $\mathcal{A}|(\mathcal{U}^2-1)$ and $\mathcal U= \mathcal{U}_q<4.12\cdot 10^{12}$,
then we put $(a,d_{-1},b)$ or $(d_{-1},a,b)=(\mathcal{A},\mathcal{B},\mathcal{C})$ and $c=d_{+}(a,d_{-1},b)$, where $\mathcal C=\frac{\mathcal U^2-1}{\mathcal A}$. 

Applying Lemma~\ref{lem:Baker-Davenport} 
to $\Lambda_1$, we checked all $2340242$ triples possible $(a,b,c)$ in $15$ minutes with our GP program. In all cases we obtain that $J\le6$, which is impossible as $J>20\sqrt{2}$.

\vspace{5mm}

\noindent $\bullet$ \textbf{Case II :}  $4a^{\frac{1}{2}}b^{\frac{3}{2}}<c\le4ab^2$. Since $c=d_{+}(a,b,d_{-1})$, we deduce from our assumption that
$d_{-1}<\frac{c}{4ab}<b$. This implies that $b=\max\{a,b,d_{-1}\}$ and Lemma \ref{lem:d+ieq} yields $c<4b(ad_{-1}+1)$.
On the other hand, we have by our assumption that $4a^{\frac{1}{2}}b^{\frac{3}{2}}<c$ and therefore we get $(ab)^{\frac{1}{2}}-1<ad_{-1}$.  
Thus, we get
$$
d_{-2}=d_{-}(a,b,d_{-1}) <\frac{b}{4ad_{-1}}<\frac{b}{4\left((ab)^{\frac{1}{2}}-1\right)}.
$$
and 
$$
4ad_{-2} < \frac{ab}{(ab)^{\frac{1}{2}}-1}< (ab)^{\frac{1}{2}}+2.
$$
From $(4ad_{-2}-2)^2<ab<(ad_{-1}+1)^2$ we have $4ad_{-2}<ad_{-1}+3$ and so $d_{-2}<d_{-1}$. 
Substituting $ab>(4ad_{-2}-2)^2$ into the inequality $ac>4(ab)(ad_{-1})>4(ab)(ad_{-2})$,  we obtain
$$
4(4ad_{-2}-2)^2(ad_{-2})<ac<6.77\cdot 10^{25}. 
$$
It follows that $ad_{-2}<101891096$ and we get that $r_{(a,d_{-2})}=\sqrt{ad_{-2}+1}\le 10095$. Moreover, we 
know that 
$$d_{-1}<b<\left(\frac{6.77\cdot10^{25}}4\right)^{2/3}<6.6\cdot 10^{16}.$$
Similarly to Case I,  we  put $(a,d_{-2})$ or $(d_{-2},a) = (\mathcal{A}, \mathcal{B})$ in equation \eqref{eq:pell}.
And we set $(a,d_{-2},d_{-1})$ or $(d_{-2},a,d_{-1})=(\mathcal{A},\mathcal{B},(\mathcal{U}_q^2-1)/\mathcal{A})$ 
when $\mathcal{A}|\mathcal{U}_q^2-1$, for $\mathcal{U}_q<b^{1/2}<2.57\cdot 10^8$. Using $b=d_{+}(a,d_{-1},d_{-2})$ and
$c=d_{+}(a,b,d_{-1})$, we apply Lemma~\ref{lem:Baker-Davenport} to $\Lambda_1$ and check
$2565234$ triples $(a,b,c)$. The verification with our GP program took $20$ minutes and we obtained that $J\le6$ in all cases. But this is also impossible as $J>20\sqrt{2}$.

\vspace{5mm}

\noindent $\bullet$ \textbf{Case III:} $4ab^{2}<c\le4a^{\frac{3}{2}}b^{\frac{5}{2}}$. First, we observe that the inequality $4a^2b^2 <ac<6.77\cdot 10^{25}$ yields the upper bound $r<2028300$. Since $b^{1/2}<\sqrt{ab+1}=r$, we also have an upper bound for $b^{1/2}$.

Assume for the moment that $b>d_{-1}$, then $b=d_{+}(a,d_{-1},d_{-2}) > 4ad_{-1}$ and therefore $d_{-1}<\frac{b}{4a}$.
But this yields
$$4ab^2<c<4abd_{-1}+4b<b^2+4b,$$
which is impossible. Therefore, we may assume that $b<d_{-1}$. Since $d_{-1}<\frac{c}{4ab}$, we get 
$$
d_{-1} < a^{\frac12}b^{\frac32}. 
$$
and therefore
$$
ad_{-2} <\frac{d_{-1}}{4b}<\frac{(ab)^{\frac12}}4<\frac r4<507075.
$$

If we write $r_{(a,d_{-2})} = \sqrt{ad_{-2}+1}$, then we have that $r_{(a,d_{-2})}\le 712$.
Using the algorithm of Case I, we set $(a,d_{-2})$ or $(d_{-2},a) = (\mathcal{A}, \mathcal{B})$ in equation \eqref{eq:pell}.
We take $(a,d_{-2},b)$ or $(d_{-2},a,b)=(\mathcal{A},\mathcal{B},(\mathcal{U}_q^2-1)/\mathcal{A})$ when $\mathcal{A}|\mathcal{U}_q^2-1$, 
for $\mathcal{U}_q<b^{1/2}<2028300$. Using $d_{-1}=d_{+}(a,d_{-2},b)$ and $c=d_{+}(a,b,d_{-1})$, we apply  Lemma~\ref{lem:Baker-Davenport}
to $\Lambda_1$ and check all $102032$ triples $(a,b,c)$ with our GP program, which took $1$ minute and $15$ seconds. In all cases we obtain $J\le 14$. This contradicts the fact that  $J>20\sqrt{2}$.

\vspace{5mm}

\noindent $\bullet$ \textbf{Case IV:} $4a^{\frac{3}{2}}b^{\frac{5}{2}}<c\le4a^2b^3$. By the same arguments as in Case III we may assume that $b<d_{-1}$. Moreover, we have
$$
d_{-1} < \frac{c}{4ab} <  ab^2 \quad \mbox{and}\quad 
d_{-2} < \frac{d_{-1}}{4ab}<\frac{b}{4}.
$$
The later inequalities imply that $\{a,d_{-2},b\}$ is not an Euler triple. Therefore, there exists a positive integer $$d_{-3}= d_{-}(a,d_{-2},b).$$  
Let's estimate the upper bound of $ad_{-3}$. Using Lemma~\ref{lem:d+ieq} to the regular Diophantine quadruple $\{a,b,d_{-1},c\}$, we have $c<4abd_{-1}+4d_{-1}$. It follows that 
$$
d_{-1}>\frac{c}{4ab+4}>\frac{a^{\frac{3}{2}}b^{\frac{5}{2}}}{ab+1}.
$$
Again here, the regular  Diophantine quadruple $\{a,d_{-2},b,d_{-1}\}$ provides $d_{-1}<4abd_{-2}+4b$, and so
$$
\quad d_{-2}>\frac{d_{-1}-4b}{4ab}.
$$
With $b>4ad_{-2}d_{-3}$, we have 
$$
ad_{-3}<\frac{b}{4d_{-2}}<\frac{ab^2}{d_{-1}-4b}<\frac{ab}{\frac{(ab)^{\frac{3}{2}}}{ab+1}-4}. 
$$
On the other hand, $4(ab)^{5/2}<ac<6.77\cdot 10^{25}$ yields $ab<1.24\cdot 10^{10}$. Therefore, we get
$$
ad_{-3} < 111360.  
$$

Let us write $r_{(a,d_{-3})}=\sqrt{ad_{-3}+1}$, then we get that $r_{(a,d_{-3})}\le 333$.
There are $8854$ pairs $(a,d_{-3})$ satisfying
this inequality. We set $(a,d_{-3})$ or $(d_{-3},a) = (\mathcal{A}, \mathcal{B})$ in equation \eqref{eq:pell}.
For solutions $\mathcal{U}_q$ to Pell equation \eqref{eq:pell} we put $(a,d_{-3},d_{-2})$ or $(d_{-3},a,d_{-2})=(\mathcal{A},\mathcal{B},(\mathcal{U}_q^2-1)/\mathcal{A})$,
if $\mathcal{A}|\mathcal{U}_q^2-1$. Note that we may assume that $\mathcal{U}_q<b^{1/2}<111356$ since $b\leq ab<1.24\cdot 10^{10}$.
Furthermore we compute $b=d_{+}(a,d_{-3},d_{-2})$, $d_{-1}=d_{+}(a,d_{-2},b)$ and $c=d_{+}(a,b,d_{-1})$ and
apply Lemma~\ref{lem:Baker-Davenport} to $\Lambda_1$. Our GP program checked all $36762$ triples $(a,b,c)$ in $26$ seconds and we got $J\le 6$ in each case.
This also contradicts the fact $J>20\sqrt{2}$.

\vspace{5mm}

\noindent $\bullet$ \textbf{Case V:}  $ 4a^2b^3<c\le \frac{180.45b^3}{a}$. If this interval is nonempty, then we have $4a^3<180.45$. It follows that $1\le a\le3$. It is clear that $b<d_{-1}$. By Lemma~\ref{lem:d+ieq}, we have 
\begin{equation}\label{eq:case5-1}
\frac{a^2b^3}{ab+1}<\frac{c}{4(ab+1)}<d_{-1} < \frac{c}{4ab}<\frac{45.2b^2}{a^2}. 
\end{equation}
We have $d_{-2}=d_{-}(a,b,d_{-1})<\frac{d_{-1}}{4ab}<\frac{11.3b}{a^3}$.   

If the Diophantine triple $\{a, b, d_{-2}\}$ is not an Euler triple, then there exists a positive integer $d_{-3}=d_{-}(a,b,d_{-2})$. When $b<d_{-2}$,  we have $d_{-2}>4abd_{-3}\ge 12b$. This and $d_{-2}<\frac{11.3b}{a^3}$ provides $12a^3<11.3$, which is impossible.  When $b>d_{-2}$, then  $b>4ad_{-2}d_{-3}\ge 12d_{-2}$. Since $d_{-1}<4abd_{-2}+4b$, then we have 
$d_{-2}>\frac{d_{-1}-4b}{4ab}$, and so $b>12d_{-2} > \frac{3d_{-1}-12b}{ab}$. It follows that $d_{-1}<\frac{ab^2+12b}{3}$. This and \eqref{eq:case5-1} gives 
$$
\frac{a^2b^3}{ab+1}<\frac{ab^2+12b}{3}. 
$$
It follows that $2a^2b^2-13ab -12<0$. We have $ab<8$. Then $\{a,b\}=\{1,3\}$ or $\{2,4\}$. But no integer $d_{2}$ less than $b$ is such that $\{a,d_{-2},b\}$ is a Diophantine triple. 

Then we know that the Diophantine triple $\{a, b, d_{-2}\}$ is an Euler triple.  For $1\le a\le 3$ and $r\le 16023$, we have 
$$
b=\frac{r^2-1}{a},\quad d_{-2}=a+b\pm2r= a+\frac{r^2-1}{a}\pm 2r.
$$
Moreover, we compute $d_{-1}=d_{+}(a,d_{-2},b)$ and $c=d_{+}(a,b,d_{-1})$.  We checked $69428$ triples $(a,b,c)$ in  
$40$ seconds, and we have $J\le 16$. This contradicts $J>20\sqrt{2}$ again.

This completes the proof of Theorem~\ref{thm:deg2}.
\end{proof}

\section{Proof of Theorem~\ref{thm:main}}\label{sec:9}

Assume that $\{a,b,c,d,e\}$ is a Diophantine quintuple, with $a<b<c<d<e$. By Theorem \ref{thm:fujita} (cf. \cite{Fujita-1}), the quadruple 
$\{a,b,c,d\}$ is regular, i.e. the element $d$ is uniquely determined by $a,b$ and $c$.  By Proposition~\ref{pro:2-3},
for an arbitrary but fixed Diophantine triple $\{a,b,c\}$, there exists a nonnegative integer $D=\deg(a,b,c)$ such that $\{a,b,c\}$
is generated by some Euler triple $\{a',b',c'\}$. Theorem~\ref{thm:euler}, Theorem~\ref{thm:eulerplus} and Theorem~\ref{thm:deg2}
show that a Diophantine triple $\{a,b,c\}$ cannot be extended to Diophantine quintuple $\{a,b,c,d,e\}$ for $D=0,1$ and $D\ge 2$, 
respectively. This completes the proof of Theorem~\ref{thm:main}.

\section*{Acknowledgements} The first author was supported by Natural Science Foundation of China (Grant No. 11301363), 
and Sichuan provincial scientific research and innovation team in Universities (Grant No. 14TD0040), and the Natural Science 
Foundation of Education Department of Sichuan Province (Grant No. 16ZA0371). The second author thanks Purdue University Northwest
for the partial support. The third author was supported in part by the Austrian Science Fund (FWF) (Grant No. P 24801-N26). 

The authors also want to thank the organizers of the {\it Diophantine m-tuples and related problems} conference (in 2014) and the {\it 29th Journ\'ees Arithm\'etiques }(in 2015). At both conferences, they had the opportunity to discuss the ideas that led to this paper. We also want to thank Mihai Cipu and Marija Bliznac Trebje\v{s}anin for their valuable comments and for pointing out several flaws and typos in an earlier version of the paper.

Finally, they are grateful to referees whose numerous comments help to seriously improve this paper.

\end{document}